\newcommand{\field}[1]{\mathbb{#1}}
\newcommand{\R}{\field{R}}
\newcommand{\N}{\field{N}}
\newtheorem{theorem}{{\sc{Theorem}}}
\newtheorem*{nntheorem}{{\sc{Theorem}}}
\newtheorem{corollary}[theorem]{\sc{Corollary}}
\newtheorem{definition}[theorem]{\sc{Definition}}
\newtheorem{example}[theorem]{\sc{Example}}
\newtheorem{lemma}[theorem]{\sc{Lemma}}
\newtheorem{proposition}[theorem]{\sc{Proposition}}
\newtheorem{remark}[theorem]{\sc{Remark}}
    \let\@fnsymbol\@arabic
\begin{document}

\title{Integrability on Direct Limits of Banach Manifolds}

\author{Patrick CABAU $^1$
\& Fernand PELLETIER $^2$}

\maketitle

\begin{abstract}
In this paper, we study several objects in the framework of direct limits of anchored Banach bundles over particular convenient manifolds (direct limits of Banach manifolds).
In particular, we give a criterion of integrability for distributions on such
convenient manifolds which are locally direct limits of particular sequences
of Banach anchor ranges.
\end{abstract}

\textbf{MSC 2010}.- 58A30, 18A30, 46T05; secondary 17B66, 37K30, 22E65. \\
\medskip
\textbf{Keywords}.- Integrable distribution; direct limit; almost Lie
Banach algebroid; almost Lie bracket; Koszul connection; anchor range.

\medskip
$^1$ Lyc\'{e}e Pierre de Fermat, Parvis des Jacobins, 31000 Toulouse, France\\
Patrick.Cabau@ac-toulouse.fr
\smallskip \\
$^2$ Lama, Universit\'{e} de Savoie Mont Blanc, 73376 Le Bourget du Lac Cedex, France \\
fernand.pelletier@univ-smb.fr
 \bigskip

\textbf{Acknowledgments}.- The authors would like to thank the anonymous reviewer for providing valuable comments and suggestions.

\section{Introduction and results}

In classical differential geometry, a \textit{distribution} on a smooth
manifold $M$ is an assignment
\[
\mathcal{D}:x\mapsto\mathcal{D}_{x}\subset T_{x}M
\]
on $M$, where $\mathcal{D}_{x}$ is a subspace of $T_{x}M$. This distribution
is \textit{integrable} if, for any $x\in M$, there exists an immersed
submanifold $f:L\rightarrow M$ such that $x\in f(L)$ and for any $z\in L$, we
have $Tf(T_{z}L)=\mathcal{D}_{f(z)}$. On the other hand, $\mathcal{D}$ is
called \textit{involutive} if, for any vector fields $X$ and $Y$ on $M$
tangent to $\mathcal{D}$, their Lie bracket $[X,Y]$ is also tangent to
$\mathcal{D}$.

On a finite dimensional manifold, when $\mathcal{D}$ is a subbundle of $TM$, the
classical Frobenius Theorem gives an equivalence between integrability and
involutivity. In the other case, the distribution is \textit{singular} and,
even under assumptions of smoothness on $\mathcal{D}$, in general, the
involutivity is not a sufficient condition for integrability (one needs some
more additional local conditions). These problems were clarified and resolved
essentially in \cite{Sus} and \cite{Ste}.

In the context of Banach manifolds, the Frobenius Theorem is again true for
distributions which are complemented subbundles in the tangent bundle. For
singular Banach distributions closed and complemented (i.e. $\mathcal{D}_{x}$
is a complemented Banach subspace of $T_{x}M$) we also have the integrability
property under some natural geometrical conditions (see \cite{ChSt} for
instance). In a more general way,  for weak Banach distributions $\mathcal{D}$, the integrability property is again true under some geometrical
criterion (see \cite{Pel} for more details).

\bigskip

The notion of \textit{Lie algebroid} $\mathcal{A}=\left(  E,\pi,M,\rho,
[\;,\;] _{E}\right)  ,$ where $\pi:E\longrightarrow M$ is a fiber bundle and
where the anchor $\rho$ is a morphism of Lie algebras, was first introduced by
Pradines in \cite{Pra}. Such objects can be seen as generalizations of both
Lie algebras and tangent vector bundles. This context is an adapted framework
for different problems one can meet in Mechanics (e.g. non holonomic
lagrangian systems, \cite{CLLM}) or in symplectic Geometry in view of the
symplectization of Poisson manifolds and applications to quantization
(\cite{Kar}, \cite{Wei}).

\bigskip

The Stefan-Sussmann's  Theorem implies the integrability of the distribution
$\rho(E)$ for a  finite dimensional Lie algebroid $\left(  E,\pi,M,\rho,[\;,\;]
_{E}\right)  $. Moreover one also gets the existence of symplectic leaves for
Lie-Poisson Banach manifolds under comparable assumptions. For a Banach   Lie algebroid $\left(  E,\pi,M,\rho,[\;,\;]
_{E}\right)$ the same result is also true under some additional assumptions (see \cite{Pel}).

\bigskip

However, the Banach context is not necessarily the most appropriate: for
instance, in the framework of Lie-Poisson structure on the dual of the Lie
algebra of an infinite-dimensional Lie group, the adapted model is not
anymore the Banach one.  A lot of infinite-dimensional Lie
groups $G$, linked with symmetries depending on infinitely many parameters one
can meet in Mathematical Physics, can often be expressed as the union of an
ascending sequence $G_{1}\subset G_{2}\subset\cdots\subset G_{i}\subset\cdots$
of finite or infinite-dimensional Lie groups. Various examples of such objects
can be found in papers of Gl\"{o}ckner (see \cite{Glo1},
\cite{Glo2} and \cite{Glo3}). The convenient setting  as
defined by \cite{FrKr} and \cite{KrMi}   seems well adapted to this framework (see for instance \cite{Glo1}).

\bigskip

The context of this paper concerns the study of direct limits of anchored
Banach bundles over direct limits of Banach manifolds endowed with convenient
structures and the results of \cite{Glo1}, \cite{Glo2} and \cite{Glo3}. More
precisely, \textit{essentially } we consider sequences $(E_{n},\pi_{n},M_{n},\rho
_{n})_{n\in\mathbb{N}^{\ast}}$ of anchored Banach bundles where $(E_{n},\pi_{n},M_{n})_{n\in\mathbb{N}^{\ast}}$ is a strong ascending sequence of Banach bundles (cf. Definition \ref{D_AscendingSequenceBanachVectorBundles}) where the anchors $\rho_{n}$ and the bonding maps $\lambda_n^m:E_n\longrightarrow E_m$ and $\epsilon_n^m:M_n\longrightarrow M_m$ fulfill conditions of compatibility given in Definition \ref{D_DirectSequenceBanachLieAlgebroids}, (2).

Given such a sequence $(E_{n},\pi_{n},M_{n},\rho_{n})_{n\in\mathbb{N}^{\ast}}$, we get
an anchored convenient bundle $\left(  E=\underrightarrow{\lim}E_{n}, \pi=\underrightarrow{\lim}\pi_{n},M=\underrightarrow{\lim}M_{n},\rho=\underrightarrow{\lim}\rho_{n}\right)  $
(cf. Theorem \ref{T_DLBanachLieAlgebroids_ConvenientLieAlgebroids}). Note
that, according to Gl\"{o}ckner's results, in order to get an interesting
(convenient) structure on the direct limit of Banach manifolds, an essential
hypothesis is \textit{the existence of direct limit charts} (cf. Definition
\ref{DLchart}). In particular, this assumption is true if each member $M_{n}$
of the ascending sequence $M_{1}\subset\cdots\subset M_{n}\subset\cdots$ of
Banach manifolds can be endowed with a Koszul connection $\nabla^{n}$ (cf. Proposition \ref{P_StrongAscending}).\\

Another problem in the context of direct limit of an ascending sequence $\{X_n\}_{n\in \N^*}$ of topological spaces is the following one: even if each $X_n$ is a Hausdorff topological space the direct limit $X=\underrightarrow{\lim}X_n$, provided with the direct limit topology, can be  \textit{not Hausdorff}. This leads us to introduce the notion of \textit{non necessary Hausdorff convenient manifold structure} (cf. Definition \ref{nnhconvenient}).\\

When each $E_{n}$ can be endowed with an almost Lie bracket $[\;,\;]_{n}$
(resp. a Koszul connection $\nabla^{n}$) such that the restriction of
$[\;,\;]_{n+1}$ (resp. $\nabla^{n+1}$) to $E_{n}$ is $[\;,\;]_{n}$ (resp.
$\nabla^{n}$) we obtain an almost Lie bracket $[\;,\;]=\underrightarrow{\lim
}[\;,\;]_{n}$ (resp. a Koszul connection $\nabla=\underrightarrow{\lim}%
\nabla^{n}$) on $E$. Moreover, if for each $n\in\mathbb{N}$ we have $[\rho
_{n}(X),\rho_{n}(Y)]=\rho_{n}[X,Y]_{n}$ (resp. $[\;,\;]_{n}$ satisfies the
Jacobi identity) the same property is true for the direct limit $[\;,\;]$.

Now, according to Theorem 5 of \cite{Pel}, we obtain the main result of this paper:

\begin{nntheorem}
Criterion of integrability (cf. Theorem
\ref{T_IntegrabilityDLKoszulBanachBundles}).\\ Let $\Delta$ be a
distribution on a convenient manifold $M$ with the following properties:

(1) for any $x\in M$, there exists an open neighborhood $U$ of $x$, a strong ascending
sequence of anchored Banach bundles $(E_{n},\pi_{n},U_{n},\rho_{n}%
)_{n\in\mathbb{N}^{\ast}}$ endowed with a Koszul connection $\nabla^{n}$, such
that $U=\underrightarrow{\lim}U_{n},$ $\underrightarrow{\lim}\rho_{n}%
(E_{n})=\Delta_{|U}$ and such that $E_{n}$ is a complemented subbundle of
$E_{n+1}$;

(2) there exists an almost Lie bracket $[\;,\;]_{n}$ on $(E_{n},\pi_{n}%
,U_{n},\rho_{n})$ such that:
\begin{itemize}
\item $(E_{n},\pi_{n},U_{n},\rho_{n},[\;,\;]_{n})$ is a Banach Lie algebroid;
\item over each point $y_{n}\in U_{n}$ the kernel of $\rho
_{n}$ is complemented in the fiber $\pi_{n}^{-1}(y_{n})$.
\end{itemize}
Then the distribution $\Delta$ is integrable and each maximal integral manifold
satisfies the direct limit chart property at any point and so is endowed with
a non necessary Hausdorff convenient manifold structure.
\end{nntheorem}

Note that in the framework of finite-dimensional or Hilbert manifolds, this
criterion of integrability requires much weaker assumptions (cf. Corollary
\ref{C_DLHilbert}). \bigskip

In order to make this article as self-contained as possible we first recall
various notions: the convenient differential calculus setting as defined by
Fr\"{o}licher, Kriegl and Michor (part \ref{*ConvenientDifferentialCalculus}),
direct limits of topological vector spaces (part
\ref{*DL_TopologicalVectorSpaces}) or manifolds (part \ref{*DL_Manifolds}) and
linear connections on Banach bundles (part
\ref{*LinearConnectionsOnDLAnchoredBanachBundles}). In part
\ref{*DL_SequencesAlmostBanachLieAlgebroids}, Theorem
\ref{T_DLBanachLieAlgebroids_ConvenientLieAlgebroids}, we prove that certain
limits of Almost Lie Banach algebroids can be endowed with a structure of
Almost Lie convenient algebroid. In the last part, we prove the previous
theorem which is a criterion of integrability for distributions and we give an
application to actions of direct limits of Banach Lie groups.

\section{\label{*ConvenientDifferentialCalculus}Convenient differential calculus}

Differential calculus in infinite dimensions has already a long history which
goes back to the beginnings of variational calculus developed by Bernoulli and
Euler. During the last decades, a lot of theories of differentiation have been
proposed in order to differentiate in spaces more general than Banach ones;
the traditional calculus for Banach spaces is not satisfactory for the
categorical point of view since the space $C^{\infty}\left(  E,F\right)  $ of
smooth maps between Banach spaces is no longer a Banach space.

The setting of convenient differential calculus discovered by A.
Fr\"{o}licher and A.\ Kriegl (see \cite{FrKr}) is chosen. The reference for this section
is the tome \cite{KrMi} which includes some further results.

\bigskip

In order to define the smoothness on locally convex topological vector spaces
(l.c.t.v.s.) $E$, the basic idea is to test it along smooth curves (cf.
Definition \ref{D_SmoothMapBetweenLCTVS}), since this notion in this realm is
a concept without problems.

\medskip

On the one hand, a curve $c:\mathbb{R}\longrightarrow E$ is \textit{differentiable} if, for all $t$, the derivative $c^{\prime}\left(
t\right)  $ exists where  $c^{\prime}\left(  t\right)  =\underset{h\longrightarrow0}{\lim}\dfrac{1}{h}\left(  c\left(  t+h\right)  -c\left(
t\right)  \right)  $. It is \textit{smooth} if all iterative derivatives
exist.

On the other hand, if $J$ is an open subset of $\R$, we say that $c$ is \textit{Lipschitz} on $J$ if the
set $\{\displaystyle\frac{c(t_2) -c(t_1)}{t_2-t_1} ; t_1,t_2 \in J,  t_1\not= t_2\}$ is bounded in $E$.
The curve $c$ is \textit{locally Lipschitz} if every point in $\R$ has a neighborhood on which $c$ is Lipschitz.
 For $k\in \N$, the curve $c$ is of class $Lip^k$ if $c$ is derivable up to order $k$, and if  the $k^{th}$-derivative $c:\mathbb{R}\longrightarrow E$  is locally Lipschitz.

We then have the following link between both these notions (\cite{KrMi} section 1.2):

\begin{proposition}\label{lipschitz-smooth} Let $E$ be a l.c.t.v.s, and let $c:\mathbb{R}\longrightarrow E$ be a curve. Then
$c$ is $C^\infty$ if and only if $c$ is $Lip^k$ for all $k\in \N$.
\end{proposition}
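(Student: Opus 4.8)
The proposition is the classical fact from Kriegl--Michor that being $C^\infty$ (all iterated derivatives exist) and being $Lip^k$ for all $k$ coincide for curves into a l.c.t.v.s. The plan is to prove the two implications separately, reducing everything to the scalar-valued case via the families of continuous linear functionals $\ell \in E'$, since a curve $c:\R\to E$ is $C^k$, resp. $Lip^k$, essentially iff each composite $\ell\circ c$ is, together with appropriate uniform boundedness of the relevant difference quotients in $E$.

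First I would handle the easy direction: if $c$ is $Lip^k$ for all $k$, then in particular $c$ is derivable to every order, so all iterated derivatives exist, hence $c$ is $C^\infty$ by the very definition of smoothness given just above the proposition. This direction is essentially a tautology once one notes that $Lip^k$ presupposes $k$-fold derivability.

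The substantive direction is the converse: assume $c$ is $C^\infty$, i.e. all iterated derivatives $c^{(j)}:\R\to E$ exist; I must show each $c^{(j)}$ is locally Lipschitz. The key step is a mean-value / integral-remainder estimate: for a curve derivable on an interval, the difference quotient $\frac{c(t_2)-c(t_1)}{t_2-t_1}$ lies in the closed convex hull of $\{c'(t):t\in[t_1,t_2]\}$; since $c'$ is continuous (being itself derivable, as $c$ is $C^\infty$) it maps a compact interval to a bounded — indeed compact — set, whose closed convex hull is bounded in $E$. Applying this with $c$ replaced by $c^{(j)}$ (whose derivative $c^{(j+1)}$ exists and is continuous by hypothesis) shows that on each compact subinterval the set of difference quotients of $c^{(j)}$ is bounded, i.e. $c^{(j)}$ is locally Lipschitz, so $c$ is $Lip^j$. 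As $j$ is arbitrary, $c$ is $Lip^k$ for all $k$.

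The main obstacle is justifying the mean-value inclusion of the difference quotient in the closed convex hull of derivative values in a general locally convex space, where one cannot simply integrate: the clean way is to test against $\ell\in E'$, apply the scalar mean value theorem to $\ell\circ c$ to get $\ell\big(\tfrac{c(t_2)-c(t_1)}{t_2-t_1}\big)$ between $\inf$ and $\sup$ of $\ell\circ c'$ on $[t_1,t_2]$, and then invoke the Hahn--Banach separation theorem to conclude the vector-valued difference quotient lies in $\overline{\operatorname{conv}}\,c'([t_1,t_2])$; boundedness of that hull follows since the closed convex hull of a compact set in a l.c.t.v.s. is bounded. Since this is exactly the content of \cite[section~1.2]{KrMi}, I would cite it rather than reprove the Hahn--Banach step in detail.
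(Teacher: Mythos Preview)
Your sketch is correct. Note, however, that the paper does not actually supply its own proof of this proposition: it is simply recorded as a fact from \cite{KrMi}, section~1.2, with no argument given. Your plan reconstructs precisely the standard Kriegl--Michor argument (the mean-value inclusion of difference quotients in the closed convex hull of derivative values, justified via Hahn--Banach separation and the scalar mean value theorem applied to $\ell\circ c$), and you yourself conclude by citing the same source---so you are in full agreement with the paper's treatment, only with the content of the citation unpacked.
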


The space $C^{\infty}\left(  \mathbb{R},E\right)  $ of such curves does
not depend on the locally convex topology on $E$ but only on its associated
\textit{bornology} (system of bounded sets). Note that the topology can vary
considerably without changing the bornology; the \textit{bornologification}
$E_{\text{born}}$ of $E$ is the finest locally convex structure having the
same bounded sets.

One can note that the link between continuity and smoothness in infinite
dimension is not as tight as in finite dimension: there are smooth maps which
are not continuous!

\bigskip

The $c^{\infty}$-topology on a l.c.v.s. is the final topology with respect to
all smooth curves $\mathbb{R}\rightarrow E$; it is denoted by $c^{\infty}E$.
Its open sets will be called $c^{\infty}$-open.

For every absolutely convex closed bounded set $B$, the linear span $E_{B}$ of
$B$ in $E$ is equipped with the Minkowski functional $p_{B}\left(  v\right)
=\inf\left\{  \lambda>0:v\in\lambda.B\right\}  $ which is a norm on $E_{B}$.

We then have the following characterization of $c^{\infty}$-open sets (\cite{KrMi}
Theorem 2.13):

\begin{proposition}
\label{P_cinfty-open} $U\subset E$ is $c^{\infty}$-open if and only if
$U\cap E_{B}$ is open in $E_{B}$ for all absolutely convex bounded subsets
$B\subset E$.
\end{proposition}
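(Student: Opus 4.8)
The statement is exactly that the $c^{\infty}$-topology of $E$ coincides with the final topology determined by the inclusions $\iota_{B}\colon E_{B}\hookrightarrow E$, where $B$ ranges over the absolutely convex bounded subsets of $E$; recall that for such a $B$ the Minkowski functional $p_{B}$ is a genuine norm on $E_{B}$, since $\bigcap_{\lambda>0}\lambda B=\{0\}$, and that by local convexity the absolutely convex hull of a bounded set is again bounded. The plan is to prove the two implications of the equivalence separately, using throughout Proposition \ref{lipschitz-smooth}, which tells us that a smooth curve is $Lip^{k}$ for every $k$, and in particular locally Lipschitz.

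First I would establish that the trace condition implies $c^{\infty}$-openness; this direction is elementary. Suppose $U\cap E_{B}$ is $p_{B}$-open for every $B$, and assume for contradiction that $U$ is not $c^{\infty}$-open. Since the $c^{\infty}$-topology is the final topology for smooth curves, there is a smooth $c\colon\R\to E$ and a point $t_{0}$ with $c(t_{0})\in U$ that is not interior to $c^{-1}(U)$; hence there are $t_{n}\to t_{0}$, $t_{n}\neq t_{0}$, with $c(t_{n})\notin U$, and we may take all $t_{n}\in[t_{0}-1,t_{0}+1]$. On this compact interval $c$ is Lipschitz, so the set $D$ of its difference quotients there is bounded, and we let $B$ be the absolutely convex hull of $D\cup\{c(t_{0})\}$, which is bounded and absolutely convex. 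With $h_{n}:=t_{n}-t_{0}$ we have $c(t_{n})-c(t_{0})=h_{n}\,\tfrac{c(t_{n})-c(t_{0})}{t_{n}-t_{0}}\in h_{n}D\subseteq h_{n}B$, so $c(t_{0}),c(t_{n})\in E_{B}$ and $p_{B}\bigl(c(t_{n})-c(t_{0})\bigr)\le|h_{n}|\to 0$. Thus $c(t_{0})\in U\cap E_{B}$ is a $p_{B}$-limit of points $c(t_{n})\in E_{B}\setminus U$, contradicting the $p_{B}$-openness of $U\cap E_{B}$.

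For the converse I would run the analogous argument in the opposite direction, and this is where the substantive work lies. Assume $U$ is $c^{\infty}$-open, fix an absolutely convex bounded $B$, and suppose $U\cap E_{B}$ is not $p_{B}$-open: there is $p\in U\cap E_{B}$ and points $y_{n}\in E_{B}\setminus U$ with $p_{B}(y_{n}-p)\to 0$. Passing to a subsequence I arrange that $z_{n}:=y_{n}-p$ satisfies $p_{B}(z_{n})\le n^{-n}$, so that $(n^{k}z_{n})_{n}$ is $p_{B}$-bounded for every $k$; as $\iota_{B}$ is bounded, $(z_{n})$ is then a fast-falling null sequence in $E$ itself. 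The key tool is the Special Curve Lemma of the convenient setting (\cite{KrMi}), by which a sequence converging fast to $0$ may be interpolated by a smooth curve: it yields a smooth $e\colon\R\to E$ with $e(0)=0$ and $e(s_{n})=z_{n}$ for suitable $s_{n}\searrow 0$. Then $d:=p+e$ is smooth, $d(0)=p\in U$ and $d(s_{n})=y_{n}\notin U$ with $s_{n}\to 0$, so $d^{-1}(U)$ is not a neighborhood of $0$, contradicting the $c^{\infty}$-openness of $U$. The two implications together give the equivalence.

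The main obstacle is precisely this construction of the interpolating curve $e$: converting a merely fast-convergent sequence into a globally smooth curve is the content of the Special Curve Lemma, whose proof smooths the infinite polygon through the $z_{n}$ while keeping every derivative under control. By contrast the first implication uses only local Lipschitzness and the elementary difference-quotient estimate, and in particular no completeness of $E_{B}$ --- which explains why the characterization holds for all absolutely convex bounded $B$, not merely the closed ones.
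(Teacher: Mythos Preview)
The paper does not give its own proof of this proposition; it simply records it as Theorem~2.13 of \cite{KrMi}. Your argument is correct and is essentially the proof one finds in \cite{KrMi}: the implication ``trace condition $\Rightarrow$ $c^{\infty}$-open'' uses that a smooth curve is locally Lipschitz (Proposition~\ref{lipschitz-smooth}) to trap a short piece of the curve inside some $E_{B}$, while the implication ``$c^{\infty}$-open $\Rightarrow$ trace condition'' uses the Special Curve Lemma to interpolate a fast-falling sequence by a smooth curve. Both directions are handled cleanly, and your remark that no completeness of $E_{B}$ is needed (so that one may work with arbitrary absolutely convex bounded $B$, not only closed ones) is to the point.
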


\begin{remark}
The $c^{\infty}$-topology is in general finer than the original topology and
$E$ is not a topological vector space when equipped with the $c^{\infty}$-topology.

For Fr\'{e}chet spaces and so Banach spaces, this topology coincides with the
given locally convex topology.
\end{remark}

\begin{definition}
\label{D_LCVTBornological}A locally convex vector space $E$ is called
bornological if any bounded linear mapping\footnote{A linear map between
locally convex vector spaces is \textit{bounded} if it maps every bounded set
to a bounded one.} $f:E\longrightarrow F$ (where $F$ is any Banach space) is continuous.
\end{definition}

\begin{lemma}
Let $E$ be a bornological vector space. The $c^{\infty}$-topology and
the locally convex topology coincide (i.e. $c^{\infty}E=E$) if the closure of
subsets in $E$ is formed by all limits of sequences in the subset.
\end{lemma}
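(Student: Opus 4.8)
The plan is to establish the two inclusions between the locally convex topology $\tau$ of $E$ and the topology $c^{\infty}E$ separately. One of them is free and uses no hypothesis: every smooth curve $c\colon\R\to E$ is differentiable, hence continuous for $\tau$, so $c^{-1}(V)$ is open in $\R$ whenever $V$ is $\tau$-open; since $c^{\infty}E$ is by definition the final topology with respect to all smooth curves, this shows that every $\tau$-open set is $c^{\infty}$-open. Thus $c^{\infty}E$ is always finer than $\tau$, and it remains to prove the converse, namely that every $c^{\infty}$-closed subset $A\subseteq E$ is $\tau$-closed.

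First I would use the standing hypothesis to reduce this to a statement about sequences: since the $\tau$-closure of any subset is its sequential closure, it suffices to check that $A$ is sequentially $\tau$-closed. So fix $(x_{n})\subseteq A$ with $x_{n}\to x$ in $\tau$; we must show $x\in A$. A $\tau$-convergent sequence together with its limit is $\tau$-bounded, hence $\{x_{n}:n\in\N\}\cup\{x\}$ is contained in some absolutely convex bounded set $B$, so all the $x_{n}$ and $x$ lie in the normed space $(E_{B},p_{B})$. Applying Proposition \ref{P_cinfty-open} to the $c^{\infty}$-open set $E\setminus A$, its trace $(E\setminus A)\cap E_{B}$ is open in $E_{B}$, so $A\cap E_{B}$ is closed in $E_{B}$.

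The core of the argument --- and the step I expect to be the genuine obstacle --- is to improve the $\tau$-convergence $x_{n}\to x$ into norm convergence inside some $E_{B'}$; equivalently, to produce a bounded disk $B'$ and a scalar sequence $\mu_{n}\to\infty$ with $\{\mu_{n}(x_{n}-x)\}$ bounded, for then $p_{B'}(x_{n}-x)\le\mu_{n}^{-1}\to 0$. This is precisely where the bornologicity of $E$ enters essentially, in combination with the sequentiality hypothesis: a bornological space carries the locally convex inductive-limit topology of its normed subspaces $E_{B}$, and the assumption that $\tau$-closures are sequential is what keeps this inductive limit from being pathological and forces a $\tau$-null sequence to admit a Mackey-null subsequence (passing to a subsequence is harmless, since we only need the limit $x$ of that subsequence, which still belongs to $A$). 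Rather than re-deriving this I would invoke the corresponding analysis of Kriegl and Michor \cite{KrMi}.

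Granting that step, we may enlarge $B'$ so that it also contains $x$ and all the $x_{n}$; then $x_{n}\to x$ in the norm $p_{B'}$, the $x_{n}$ lie in $A\cap E_{B'}$, and $A\cap E_{B'}$ is norm-closed in $E_{B'}$ by Proposition \ref{P_cinfty-open} applied as above. Hence $x\in A\cap E_{B'}\subseteq A$, which shows that $A$ is $\tau$-closed and therefore that $c^{\infty}E=E$. I would close by noting that Fr\'echet spaces --- in particular Banach spaces --- satisfy the hypothesis, being metrizable and hence having sequential closures, so this recovers the earlier remark, whereas the hypothesis is strictly weaker than metrizability (for instance the countable locally convex direct sum $\R^{(\N)}$ satisfies it without being metrizable).
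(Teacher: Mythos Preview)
The paper does not prove this lemma: it is stated without argument in Section~\ref{*ConvenientDifferentialCalculus} as part of the background review of convenient calculus drawn from \cite{KrMi} and \cite{FrKr}. So there is no proof on the paper's side to compare against, and your proposal must be judged on its own.

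Your architecture is correct. The inclusion $\tau\subseteq c^{\infty}E$ is automatic, and using the sequential--closure hypothesis to reduce the reverse inclusion to showing that every $c^{\infty}$-closed set is sequentially $\tau$-closed is exactly the right reduction. You also correctly identify the one substantial step: upgrading a $\tau$-convergent sequence $(x_n)\to x$ to a Mackey-convergent subsequence, after which Proposition~\ref{P_cinfty-open} (equivalently, the description of $c^{\infty}$-closed sets as those closed under Mackey limits, via the special curve lemma) finishes the argument.

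That step, however, is not carried out. You explicitly defer it to \cite{KrMi}, which is the very source of the lemma, so the argument is not self-contained; and your explanation of \emph{why} bornologicity together with sequential closures should force Mackey-convergent subsequences (``keeps this inductive limit from being pathological and forces a $\tau$-null sequence to admit a Mackey-null subsequence'') is a heuristic, not a proof. It is this implication that carries the entire content of the lemma --- the remaining steps are formal --- and it is precisely here that bornologicity must do its work, since nothing earlier in your outline uses it. Until you actually establish that a $\tau$-null sequence in a bornological space satisfying the sequential-closure hypothesis admits a Mackey-null subsequence (or replace this by an equivalent argument), the proposal is an outline with its central link missing rather than a proof.
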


\begin{definition}
\label{D_SmoothMapBetweenLCTVS}Let $E$ and $F$ be l.c.t.v.s. A mapping
$f:E\longrightarrow F$ is called conveniently  smooth if it maps smooth curves into smooth
curves, i.e. if $f\circ c\in C^{\infty}\left(  \mathbb{R},F\right)  $ for all
$c\in C^{\infty}\left(  \mathbb{R},E\right)  $.
\end{definition}

\medskip
 Note  that in finite dimensional spaces $E$ and $F$ this corresponds
to the usual notion of smooth mappings as proved by Boman (see \cite{Bom}).

\medskip
In finite-dimensional analysis, we use the Cauchy condition, as a necessary
condition for the convergence of a sequence, to define completeness of the
space. In the infinite-dimensional framework, we use the notion of
Mackey-Cauchy sequence (cf. \cite{KrMi}, section 2).

\begin{definition}
\label{D_MackeyCauchySequence} A sequence $\left(  x_{n}\right)  $ in $E$ is
called Mackey-Cauchy if there exists a bounded absolutely convex subset $B$ of $E$ such
that $\left( x_{n} \right)$ is a Cauchy sequence in the normed space $E_B$.
\end{definition}

\begin{definition}
\label{D_ConvenientVectorSpace} A locally convex vector space is said to be $c^{\infty}$-complete or \textit{convenient } if any Mackey-Cauchy sequence converges ($c^{\infty}$-completeness).
\end{definition}

We then have the following characterizations:

\begin{proposition}
A locally convex vector space is \textit{convenient if one of the following
equivalent conditions is satisfied:}
\newline 1. For every absolutely convex closed bounded set $B$ the linear span $E_{B}$ of $B$ in $E$, equipped with
the norm $p_{B}$ is complete.
\newline 2. A curve $c:\mathbb{R}%
\longrightarrow E$ is smooth if and only if $\lambda\circ c$ is smooth for all
$\lambda\in E^{\prime}$ where $E^{\prime}$ is the dual consisting of all
continuous linear functionals on $E$.
\newline 3. Any Lipschitz curve in $E$ is locally Riemann integrable.
\end{proposition}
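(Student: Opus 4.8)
The plan is to show that the three listed properties are all equivalent to being convenient (i.e. $c^\infty$-complete in the sense of Definition \ref{D_ConvenientVectorSpace}) by running the cycle
\[
\text{convenient} \;\Rightarrow\; (1) \;\Rightarrow\; (3) \;\Rightarrow\; (2) \;\Rightarrow\; \text{convenient},
\]
so that each implication is treated by one self-contained argument and the genuinely analytic steps are isolated from the bookkeeping.

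The equivalence $\text{convenient} \Leftrightarrow (1)$ is the elementary part and is essentially a rewriting of the definition through the Minkowski functional $p_B$. By Definition \ref{D_MackeyCauchySequence} a Mackey-Cauchy sequence is a $p_B$-Cauchy sequence in some normed space $E_B$ with $B$ absolutely convex bounded; replacing $B$ by its closed absolutely convex hull (still bounded) we may take $B$ closed. If (1) holds then $E_B$ is complete, the sequence converges in $E_B$, and since the inclusion $E_B\hookrightarrow E$ is bounded hence continuous it converges in $E$, giving convenience. Conversely, a $p_B$-Cauchy sequence $(x_n)$ in $E_B$ (with $B$ closed) is Mackey-Cauchy, so by convenience it converges to some $x\in E$; from $p_B(x_n)\le C$ we get $x_n\in C\cdot B$ with $C\cdot B$ closed, whence $x\in E_B$, and letting $m\to\infty$ in $x_n-x_m\in\varepsilon B$ yields $p_B(x_n-x)\le\varepsilon$, i.e. convergence in $E_B$; thus each $E_B$ is a Banach space.

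The analytic heart is the remaining chain. For $(1)\Rightarrow(3)$ I would take a Lipschitz curve $c$ on a compact interval: its difference quotients lie in a bounded absolutely convex $B$, and a direct estimate shows the Riemann sums over refining partitions form a $p_B$-Cauchy sequence in $E_B$; by (1) this converges in the Banach space $E_B\subset E$, and the limit is the Riemann integral. For $(3)\Rightarrow(2)$ one direction is trivial, since every $\lambda\in E'$ is continuous and linear, so smooth curves are carried to smooth curves. The converse is the crux: if $\lambda\circ c$ is smooth for all $\lambda$ then the difference quotients of $c$ are weakly bounded, hence bounded (weakly bounded $=$ bounded), so $c$ is locally Lipschitz; one then shows the weak derivative exists in $E$ as a Mackey limit of these bounded difference quotients and recovers $c$ as the Riemann integral of that derivative, the integral being furnished by (3) and commuting with each $\lambda$; Proposition \ref{lipschitz-smooth} finally upgrades the resulting $Lip^k$ regularity for all $k$ to genuine smoothness. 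For $(2)\Rightarrow\text{convenient}$, given a Mackey-Cauchy sequence one passes to a fast (geometrically Mackey-convergent) subsequence and interpolates it, via the special curve lemma of \cite{KrMi}, by a curve $c$ with flat behaviour at the terminal parameter; each $\lambda\circ c$ is then smooth, so by (2) the curve $c$ is smooth, in particular continuous at the endpoint, which forces the subsequence and hence the original sequence to converge.

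The main obstacle is the weak-to-strong passage in $(3)\Rightarrow(2)$, together with the closing interpolation in $(2)\Rightarrow\text{convenient}$. The delicate point is to guarantee that scalarwise (weak) smoothness produces an honest $E$-valued derivative with Mackey convergence of the difference quotients, and that abstract Mackey-Cauchy data can be realized by a genuine curve; both rely on the identification of bounded sets through the dual, on local Riemann integrability to manufacture antiderivatives, and on the curve lemmas of \cite{KrMi}. Once these curve-theoretic facts and Proposition \ref{lipschitz-smooth} are in place, the remaining manipulations with the norms $p_B$ are routine.
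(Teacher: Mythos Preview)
The paper does not give its own proof of this proposition: it is stated as a recall of standard material from \cite{KrMi} and \cite{FrKr} (see \cite{KrMi}, Theorem~2.14), so there is no ``paper's proof'' to compare against. Your cycle $\text{convenient}\Rightarrow(1)\Rightarrow(3)\Rightarrow(2)\Rightarrow\text{convenient}$ is exactly the route taken in \cite{KrMi}, and the individual steps you outline (Mackey-Cauchy $\leftrightarrow$ $p_B$-Cauchy for the first equivalence, Riemann sums landing in a single $E_B$ for $(1)\Rightarrow(3)$, the weak-boundedness $=$ boundedness and integration-of-the-weak-derivative argument for $(3)\Rightarrow(2)$, and the special curve lemma for the closing implication) are the standard ones and are correctly identified. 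One small caution: in your $(3)\Rightarrow(2)$ step you invoke convergence of difference quotients ``as a Mackey limit'', but at that point you only have (3), not yet Mackey-completeness; the clean way is to integrate the scalarwise derivative (which is itself scalarwise smooth, hence locally Lipschitz by the same weak-bounded argument) using (3), and then identify the integral with $c$ up to a constant via the separating family $E'$, rather than appealing to Mackey limits directly.
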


\begin{example}
\label{Ex_Rinfinity}${}$The vector space $\mathbb{R}^{\infty},$ also denoted
by $\mathbb{R}^{\left(  \mathbb{N}\right)  }$ or $\Phi$, of all finite
sequences is a countable convenient vector space (\cite{KrMi}, 47.1) which is
not metrizable. A basis of $\mathbb{R}^{\infty}$ is $\left(  e_{i}\right)
_{i\in\mathbb{N}^{\ast}}$ where $e_{i}=\left(  0,\dots,0,\underset
{i^{th}\ \text{term}}{1},0,\dots\right)  $.
\end{example}




\bigskip

\begin{theorem}
Let $U$ be a $c^{\infty}$-open set of a convenient vector space $E$ and let
$F$ and $G$ be convenient vector spaces.

1. The space $C^{\infty}\left(  U,F\right)  $ may be endowed with a structure
of convenient vector space. The subspace $L\left(  E,F\right)  $ of all
bounded linear mappings from $E$ to $F$ is closed in $C^{\infty}\left(
E,F\right)$.

2. The category is cartesian closed, i.e. we have the natural diffeomorphism:%
\[
C^{\infty}\left(  E\times F,G\right)  \simeq C^{\infty}\left(  E,C^{\infty
}\left(  F,G\right)  \right).
\]

3. The differential operator%
\[
d   :C^{\infty}\left(  E,F\right)  \longrightarrow C^{\infty}\left(
E,L\left(  E,F\right)  \right)
\]
\[
df\left(  x\right)  v   =\underset{t\longrightarrow0}{\lim}\dfrac{f\left(
x+tv\right)  -f\left(  x\right)  }{t}
\]
\newline exists and is linear and smooth.

4. The chain rule holds:%
\[
d\left(  f\circ g\right)  \left(  x\right)  v=df\left(  g\left(  x\right)
\right)  dg\left(  x\right)  v.
\]
\end{theorem}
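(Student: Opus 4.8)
The plan is to follow the development of \cite{KrMi}, establishing the four assertions in the stated order $1\Rightarrow 2\Rightarrow 3\Rightarrow 4$, since each is used in proving the next. For assertion $1$, I would first topologize $C^{\infty}(\mathbb{R},F)$ by the locally convex topology that is initial with respect to the maps $\lambda_{*}:C^{\infty}(\mathbb{R},F)\to C^{\infty}(\mathbb{R},\mathbb{R})$, $\lambda\in F'$, the target carrying its Fr\'echet topology of uniform convergence of each derivative on compacta; its Mackey-completeness follows from that of $F$ via condition $2$ of the proposition above characterizing convenient vector spaces. Then $C^{\infty}(U,F)$ is given the structure initial with respect to all pullbacks $c^{*}:C^{\infty}(U,F)\to C^{\infty}(\mathbb{R},F)$ along smooth curves $c\in C^{\infty}(\mathbb{R},U)$. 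A Mackey-Cauchy net in $C^{\infty}(U,F)$ becomes Mackey-Cauchy along every smooth curve and pointwise in $F$, hence converges pointwise to some $f:U\to F$, and $f$ is conveniently smooth because smoothness is tested curve-wise (Definition \ref{D_SmoothMapBetweenLCTVS}); this gives $c^{\infty}$-completeness. The closedness of $L(E,F)$ in $C^{\infty}(E,F)$ then follows because evaluation at a point is continuous, so additivity and homogeneity pass to a limit, and the limit is again bounded since Mackey-convergence in $C^{\infty}(E,F)$ entails uniform estimates on bounded subsets of $E$.

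For assertion $2$, the candidate isomorphism is $\Phi:f\mapsto f^{\vee}$ with $f^{\vee}(x)=f(x,\cdot)$, and inverse $g\mapsto g^{\wedge}$ with $g^{\wedge}(x,y)=g(x)(y)$. That $f^{\vee}$ is well defined and smooth when $f$ is smooth is routine: $f^{\vee}(x)$ is $f$ precomposed with $y\mapsto(x,y)$, and $x\mapsto f^{\vee}(x)$ composed with any smooth curve, tested again along a smooth curve in $F$, is $f$ composed with a smooth curve in $E\times F$. The hard direction is that $g^{\wedge}$ is smooth whenever $g$ is: for a smooth curve $c=(c_{1},c_{2}):\mathbb{R}\to E\times F$ one must show $g^{\wedge}\circ c\in C^{\infty}(\mathbb{R},G)$, and by condition $2$ of the same characterizing proposition it suffices to show $\lambda\circ g^{\wedge}\circ c$ is smooth for every $\lambda\in G'$; here one invokes the structural tools of \cite{KrMi}, namely the uniform boundedness principle for $C^{\infty}$-spaces and the special curve lemma, which reduce the problem to the finite-dimensional statement of Boman \cite{Bom} that separate smoothness together with a boundedness condition implies joint smoothness. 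Finally, that $\Phi$ is a diffeomorphism of convenient vector spaces, natural in all variables, is a bootstrap: $\Phi$ and $\Phi^{-1}$ are seen to be conveniently smooth by testing along curves and unravelling with the exponential law just obtained.

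For assertion $3$, given $f\in C^{\infty}(U,F)$ and $v\in E$, the curve $t\mapsto f(x+tv)$ is smooth near $0$ (it is $f$ precomposed with an affine curve, and $U$ is $c^{\infty}$-open), so its derivative at $0$ exists in $F$ and defines $df(x)v$; linearity of $df(x)$ in $v$ reduces via all $\lambda\in F'$ to the one-dimensional case as in the Banach setting, and boundedness follows from smoothness, so $df(x)\in L(E,F)$. Linearity of $d$ is immediate from the defining limit, and its smoothness follows from assertion $2$: the uncurried evaluation $(f,x)\mapsto f(x)$ is smooth (being $\mathrm{id}^{\wedge}$), hence so is its partial derivative $(f,x,v)\mapsto df(x)v$, and currying back first in the $E$-variable (the target lands in the closed subspace $L(E,F)$) and then in the $U$-variable yields $d\in C^{\infty}\bigl(C^{\infty}(U,F),C^{\infty}(U,L(E,F))\bigr)$. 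For assertion $4$, set $\gamma(t)=g(x+tv)$, a smooth curve with $\gamma(0)=g(x)$ and $\gamma'(0)=dg(x)v$; then $d(f\circ g)(x)v=(f\circ\gamma)'(0)$, and the identity $(f\circ\gamma)'(0)=df(\gamma(0))\gamma'(0)$ — the chain rule for a smooth map composed with a smooth curve — follows from the smoothness of $df$ just established, by testing against $\lambda\in F'$ and applying the ordinary chain rule to $\lambda\circ f$. The main obstacle throughout is the hard direction of assertion $2$ (equivalently, the smoothness of $d$): everything else is bookkeeping, whereas that step genuinely requires the uniform boundedness principle and the special curve lemma of \cite{KrMi}.
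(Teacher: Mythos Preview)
The paper does not actually prove this theorem: it is stated without proof in Section~\ref{*ConvenientDifferentialCalculus} as part of the background recall of convenient calculus, with the blanket reference ``The reference for this section is the tome \cite{KrMi}''. So there is no proof in the paper to compare your proposal against.

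Your sketch is a reasonable outline of how the argument runs in \cite{KrMi} (roughly Sections~2--3 there), and the identification of the hard step --- the ``wedge'' direction of cartesian closedness, requiring the special curve lemma and uniform boundedness --- is accurate. But for the purposes of this paper no proof is expected or supplied; the theorem is simply quoted from the literature.
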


\medskip

\begin{proposition}
\label{P_PreservationCompletness} The following constructions preserve $c^{\infty}$-completeness: limits, direct sums, strict direct limits of sequences of closed embeddings.
\end{proposition}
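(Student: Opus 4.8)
The plan is to reduce everything to the Mackey-Cauchy characterization of convenient spaces (Definitions \ref{D_MackeyCauchySequence} and \ref{D_ConvenientVectorSpace}): a locally convex space is convenient if and only if every Mackey-Cauchy sequence converges, and if $B$ is an absolutely convex bounded set then the norm $p_B$ on the span $E_B$ depends only on the bornology, not on the topology. Thus, in each of the three constructions, it suffices to take a Mackey-Cauchy sequence in the constructed space, locate an absolutely convex bounded set $B$ witnessing it inside a subspace which is already known to be convenient and which carries the induced topology, conclude convergence there, and note that convergence in that subspace forces convergence in the ambient space.

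For limits: a limit in the category of (Hausdorff) locally convex spaces is a closed linear subspace of a product $\prod_i E_i$, so it is enough to treat products and closed subspaces. For a product, given a Mackey-Cauchy sequence $(x^{(k)})_k$ with witness $B$, each $B_i := \mathrm{pr}_i(B)$ is absolutely convex and bounded, $(\mathrm{pr}_i(x^{(k)}))_k$ is Mackey-Cauchy in $E_i$ with witness $B_i$, hence converges to some $x_i$ since $E_i$ is convenient; one checks that $x = (x_i)_i$ is the Mackey limit of $(x^{(k)})_k$ with respect to $\overline{B}$. For a closed subspace $F \subset E$ with $E$ convenient, a Mackey-Cauchy sequence in $F$ has a witness $B \subset F$, is a fortiori Mackey-Cauchy in $E$, converges in $E$, and its limit lies in $F$ since $F$ is closed; as $F$ carries the subspace topology, the sequence converges in $F$.

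For direct sums and strict direct limits the argument is the same, the only new ingredient being the description of bounded sets. For $\bigoplus_i E_i$, every bounded subset is contained in and bounded in a finite partial sum; hence a Mackey-Cauchy sequence, together with its witness $B$ and its first term, lies entirely in some $\bigoplus_{i \in J} E_i$ with $J$ finite, which is a finite product of convenient spaces and so convenient, and the sequence converges there, hence in $\bigoplus_i E_i$. For a strict direct limit $E = \underrightarrow{\lim}\, E_n$ of a sequence of closed embeddings $E_n \hookrightarrow E_{n+1}$, the Dieudonn\'e--Schwartz theorem gives that each $E_n$ carries the topology induced from $E$ and that every bounded subset of $E$ is contained in and bounded in some $E_n$; so a Mackey-Cauchy sequence in $E$ has a witness $B \subset E_n$, is Mackey-Cauchy in $E_n$ (the norm $p_B$ being unchanged), converges in $E_n$, and therefore in $E$. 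The genuine input, which I would simply cite, is precisely these two bornological facts — bounded sets in direct sums are finitely supported, bounded sets in strict inductive limits of closed embeddings are carried by a single term — and this is exactly where the restrictive hypotheses (direct sum rather than arbitrary colimit; strictness and closedness of the embeddings) are used; getting that dependence right is the main point.
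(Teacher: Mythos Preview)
The paper does not actually supply a proof of this proposition: it is stated as a known fact from the convenient calculus literature (the result is \cite{KrMi}, Theorem~2.15), and the paper immediately moves on. So there is no ``paper's own proof'' to compare against.

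Your argument is correct and is essentially the standard one. The reduction to Mackey--Cauchy sequences is exactly right, and the three cases are handled properly: limits via products and closed subspaces, direct sums via the fact that bounded sets are finitely supported, and strict inductive limits of closed embeddings via Dieudonn\'e--Schwartz (bounded regularity plus the fact that each step carries the induced topology). Your closing remark is also on point: the whole content of the proposition lives in those two bornological facts, and they are precisely what fails for general colimits --- which is why the paper follows the statement with the caveat that arbitrary inductive limits of convenient spaces need not be convenient. One very minor comment: in the product case you only need topological convergence of the Mackey--Cauchy sequence (coordinatewise convergence suffices for the product topology), so the stronger claim that $x$ is a Mackey limit with respect to $\overline{B}$ is true but not required.
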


\smallskip
In general, an inductive limit of $c^{\infty}$-complete spaces needs not be $c^{\infty}$-complete (cf. \cite{KrMi}, 2.15, example).

\medskip

According to  \cite{KrMi} section  27.1, a $C^\infty$-{\it atlas} modeled  on a set $M$  modeled on a convenient space $E$ is a family $\{(U_\alpha, u_\alpha) \}_{\alpha\in A}$ of subsets $U_\alpha$ of $M$ and maps $u_\alpha$ from $U_\alpha$ to $E$  such that:

$\bullet$  $u_\alpha$ is a bijection of $U_\alpha$ onto a $c^\infty$-open  subset of  $E$ for all $\alpha\in A$;

$\bullet$  $M=\displaystyle\bigcup_{\alpha\in A}U_\alpha$;

$\bullet$ for any $\alpha$ and $\beta$ such that  $U_{\alpha\beta}=U_\alpha\cap U_\beta\not=\emptyset$, \\
$u_{\alpha\beta}=u_\alpha\circ u_\beta^{-1}:u_\beta(U_{\alpha\beta})\longrightarrow u_\alpha(U_{\alpha\beta})$ is a conveniently  smooth map.\\

Classically, we have a notion of equivalent  $C^\infty$-atlases on  $M$. An equivalent class of $C^\infty$-atlases on $M$ is a maximal  $C^\infty$-atlas.
Such an atlas defines a topology on $M$ which is not in general Hausdorff.

\begin{definition}\label{nnhconvenient} A maximal  $C^\infty$-atlas on $M$  is called a non necessary Hausdorff convenient manifold structure on $M$ (n.n.H. convenient manifold $M$ for short); it is called a Hausdorff convenient manifold structure on $M$  when the topology defined by this  atlas is a Hausdorff topological space.
\end{definition}
Following the classical framework, when $E$ is a Banach space (resp. a Fr\'echet space)  we  say that $M$ is a \textit{Banach manifold} (resp. \textit{Fr\'echet manifold}) if $M$ is provided with a $C^\infty$- atlas (modeled on $E$) which generates a Hausdorff topological space.\\

The notion of vector bundle modeled on a convenient space over a n.n.H. convenient manifold  is defined in a classic way (cf.  (\cite{KrMi}, 29). Note that since a convenient space is Hausdorff, a vector bundle modeled on a convenient space has a natural structure of n.n.H. convenient manifold which is Hausdorff if and only if  the base is a Hausdorff convenient manifold.

\section{\label{*DL_TopologicalVectorSpaces}Direct limits of topological vector spaces}

In this section the reader is referred to \cite{Bou2}, \cite{Glo1} and \cite{Glo2}.

Let $\left(  I,\leq\right)  $ be a directed set. A \textit{direct system} in a
category $\mathbb{A}$ is a pair $\mathcal{S}=\left(  X_{i},\varepsilon_{i}
^{j}\right)  _{i\in I,\ j\in I,\ i\leq j}$ where $X_{i}$ is an object of the
category and $\varepsilon_{i}^{j}:X_{i}\longrightarrow X_{j}$ is a morphism
(\textit{bonding map}) where:

\begin{enumerate}
\item $\varepsilon_{i}^{i}=\operatorname{Id}_{X_{i}}$;

\item $\forall\left(  i,j,k\right)  \in I^{3},i\leq j\leq k \Rightarrow \varepsilon
_{j}^{k}\circ\varepsilon_{i}^{j}=\varepsilon_{i}^{k}$.
\end{enumerate}

A \textit{cone} over $\mathcal{S}$ is a pair $\left(  X,\varepsilon
_{i}\right)  _{i\in I}$ where $X\in\operatorname*{ob}\mathbb{A}$ and
$\varepsilon_{i}:X_{i} \longrightarrow X$ is such that $\varepsilon_{j}%
\circ\varepsilon_{i} ^{j}=\varepsilon_{i}$ whenever $i\leq j$.

A cone $\left(  X,\varepsilon_{i}\right)  _{i\in I}$ is a \textit{direct
limit} of $\mathcal{S}$ if for every cone $\left(  Y,\theta_{i}\right)  _{i\in
I}$ over $\mathcal{S}$ there exists a unique morphism $\psi:X\longrightarrow
Y$ such that $\psi\circ\varepsilon_{i}=\theta_{i}$. We then write
$X=\underrightarrow{\lim}\mathcal{S}$ or $X=\underrightarrow{\lim}X_{i}$.

When $I=\mathbb{N}$ with the usual order relation, countable direct systems
are called \textit{direct sequences}.

\subsection{Direct limit of sets}

Let $\mathcal{S}=\left(  X_{i},\varepsilon_{i}^{j}\right)  _{i\in I,\ j\in
I,\ i\leq j}$ be a direct system of sets (we then have $\mathbb{A}
=\mathbb{SET}$).

Let $\mathcal{U}=\coprod\limits_{i\in I}X_{i}=\left\{  \left(  x,i\right)
:x\in X_{i}\right\}  $ be the disjoint union of the sets $X_{i}$ with the
canonical inclusion%

\[%
\begin{array}
[c]{cccc}%
\boldsymbol{\imath}_{i}: & X_{i} & \longrightarrow & \mathcal{U}\\
& x & \mapsto & \left(  x,i\right)
\end{array}
.
\]

We define an equivalence relation on $\mathcal{U}$ as follows:
$\boldsymbol{\imath}_{i}\left(  x\right)  \sim\boldsymbol{\imath}_{j}\left(
y\right)  $ if there exists $k\in I$: $i\leq k$ and $j\leq k$ s.t.
$\varepsilon_{i}^{k}\left(  x\right)  =\varepsilon_{j}^{k}\left(  y\right)  $.
We then have the quotient set $X=\mathcal{U}/\sim$ and the map $\varepsilon
_{i}=\pi\circ\boldsymbol{\imath}_{i}$ where $\pi:\mathcal{U}\longrightarrow
\mathcal{U}/\sim$ is the canonical quotient map.

Then $\left(  X,\varepsilon_{i}\right)  $ is the direct limit of $\mathcal{S}$
in the category $\mathbb{SET}$.

If each $\varepsilon_{i}^{j}$ is injective then so is $\varepsilon_{i}$, whence
$\mathcal{S}$ is equivalent to the direct system of the subsets $\varepsilon
_{i}\left(  X_{i}\right)  \subset X$, together with the inclusion maps.

\subsection{Direct limit of topological spaces}

If $\mathcal{S}=\left(  X_{i},\varepsilon_{i}^{j}\right)  _{i\in I,\ j\in
I,\ i\leq j}$ is a direct system of topological spaces and continuous maps,
then the direct limit $\left(  X,\varepsilon_{i}\right)  _{i\in I}$ of the
sets becomes the direct limit in the category $\mathbb{TOP}$ of topological
spaces if $X$ is endowed with the direct limit topology ($\mathit{DL}$-\textit{topology} for short), i.e. the finest topology which makes the maps $\varepsilon_{i}$
continuous. So $O\subset X$ is open if and only if $\varepsilon_{i}
^{-1}\left(  O\right)  $ is open in $X_{i}$ for each $i\in I$.
\smallskip

When $\mathcal{S}=\left(  X_{n},\varepsilon_{n}^{m}\right)  _{n\in
\mathbb{N}^{\ast},\ m\in\mathbb{N}^{\ast},\ n\leq m}$ is a direct sequence of
topological spaces such that each $\varepsilon_{n}^{m}$ is injective, without
loss of generality, we may assume that we have
\[
X_{1}\subset X_{2}\subset\cdots\subset X_n\subset X_{n+1}\subset \cdots
\]
and $\varepsilon_{n}^{n+1}$ becomes the natural inclusion. Therefore
$\mathcal{S}$ will be called \textit{an ascending sequence of topological
spaces} and simply denoted $(X_{n})_{n\in\mathbb{N}^{\ast}}.$

Moreover, if each $\varepsilon_{n}^{m}$ is a topological embedding, then we will
say that $\mathcal{S}$ is a \textit{strict ascending sequence of topological spaces} (\textit{expanding sequence }in the terminology of \cite{Han}).
In this situation, each $\varepsilon_{n}$ is a topological embedding on $X_{n}$ in $X=\underrightarrow{\lim}X_{n}$.

\smallskip
Let us give some properties of ascending sequences of topological spaces (\cite{Glo2}, Lemma 1.7):

\begin{proposition}
\label{P_TopologicalPropertiesAscendingSequenceTopologicalSpaces}Let
$(X_{n})_{n\in\mathbb{N}^{\ast}}$ be an ascending sequence of topological
spaces. Equip $X=\bigcup\limits_{n\in\mathbb{N}^{\ast}}X_{n}$ with the final
topology with respect to the inclusion maps $\varepsilon_{n}:X_{n}%
\longrightarrow X$ (i.e. the $DL$-topology). Then we have:

1. If each $X_{n}$ is $T_{1}$, then $X$ is $T_{1}$.


2. If $O_{n}\subset X_{n}$ is open and $O_{1}\subset O_{2}\subset\cdots$, then
$O=\bigcup\limits_{n\in\mathbb{N}^{\ast}}O_{n}$ is open in $X$ and the
$DL$-topology on $O=\underrightarrow{\lim}O_{n}$ coincides with the topology
induced by $X$.

3. If each $X_{n}$ is locally compact, then $X$ is Hausdorff.

4. If each $X_{n}$ is $T_{1}$ and $K\subset X$ is compact, then $K\subset
X_{n}$ for some $n$.
\end{proposition}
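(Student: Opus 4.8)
The plan is to prove each of the four assertions of Proposition \ref{P_TopologicalPropertiesAscendingSequenceTopologicalSpaces} directly from the defining property of the $DL$-topology, namely that $O\subset X$ is open if and only if $\varepsilon_n^{-1}(O)=O\cap X_n$ is open in $X_n$ for every $n$ (equivalently, $C\subset X$ is closed iff $C\cap X_n$ is closed in $X_n$ for every $n$). Throughout I identify $X_n$ with its image $\varepsilon_n(X_n)\subset X$, so that the $\varepsilon_n$ are the inclusion maps and $X=\bigcup_n X_n$ with $X_1\subset X_2\subset\cdots$.

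For claim 1, suppose each $X_n$ is $T_1$, and fix $x\in X$; I must show $\{x\}$ is closed in $X$, i.e. that $\{x\}\cap X_n$ is closed in $X_n$ for every $n$. Now $\{x\}\cap X_n$ is either empty (when $x\notin X_n$) or equals $\{x\}$ (when $x\in X_n$), and in the latter case it is closed because $X_n$ is $T_1$; in either case it is closed, so $\{x\}$ is closed in $X$ and $X$ is $T_1$. For claim 2, assume $O_n\subset X_n$ is open with $O_1\subset O_2\subset\cdots$ and set $O=\bigcup_n O_n$. To see $O$ is open in $X$, I compute $O\cap X_m=\bigcup_{n}(O_n\cap X_m)$; for $n\ge m$ this is $O_n\cap X_m$, which is open in $X_m$ since $O_n$ is open in $X_n$ and $X_m$ carries the subspace topology from $X_n$, and the nesting $O_n\cap X_m$ increases to $O_m$ so in fact $O\cap X_m=O_m$, which is open in $X_m$. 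Hence $O$ is open in $X$. For the topology-comparison part, I equip $O$ with (a) the subspace topology from $X$ and (b) the $DL$-topology from the sequence $(O_n)$; the inclusions $O_n\hookrightarrow O$ are continuous for the subspace topology, so by universal property the identity from the $DL$-topology to the subspace topology is continuous, and conversely a $DL$-open subset $V$ of $O$ satisfies $V\cap O_n$ open in $O_n$ for all $n$, which by openness of $O_n$ in $X$ makes $V$ open in $X$, hence $V=W\cap O$ for $W$ open in $X$; the two topologies therefore coincide.

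Claim 3 is the main obstacle and requires a genuine separation argument rather than a formal manipulation. Assuming each $X_n$ is locally compact, I take distinct points $x,y\in X$ and pick $N$ with $x,y\in X_N$. The idea is to build, inductively, an increasing sequence of \emph{compact} neighborhoods $K_n\ni x$ and $L_n\ni y$ in $X_n$ (for $n\ge N$) that are disjoint and compatible with the inclusions, then take unions to get disjoint neighborhoods of $x$ and $y$ in $X$ whose interiors separate the two points. Concretely, local compactness at $x$ in $X_n$ gives a compact neighborhood $K_n$ with $x$ in its interior; the delicate point is ensuring $K_n\subset K_{n+1}$ and $L_n\subset L_{n+1}$ while keeping $K_n\cap L_n=\emptyset$ at each stage, which uses that compact sets in the (Hausdorff, since locally compact) spaces $X_{n+1}$ can be separated from points and enlarged to compact neighborhoods. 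Taking $K=\bigcup_{n\ge N}K_n$ and $L=\bigcup_{n\ge N}L_n$, claim 2 applied to the interiors shows that $x$ and $y$ have disjoint open neighborhoods in $X$, so $X$ is Hausdorff. I expect the careful bookkeeping in this induction — simultaneously nesting, preserving compactness, and preserving disjointness — to be the real work.

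Finally, for claim 4, assume each $X_n$ is $T_1$ and let $K\subset X$ be compact; I argue by contradiction that $K\subset X_n$ for some $n$. If not, then for every $n$ there is a point $x_n\in K\setminus X_n$, and after passing to the construction I may choose the $x_n$ so that $x_n\in X_{m_n}$ for a strictly increasing sequence $m_n$, giving infinitely many distinct points of $K$. The set $S=\{x_n:n\in\mathbb{N}^*\}$ is closed in $X$: indeed $S\cap X_m$ is finite (it contains only those $x_n$ lying in $X_m$), hence closed in $X_m$ because $X_m$ is $T_1$, and this holds for every $m$, so $S$ is closed. By the same reasoning every subset of $S$ is closed in $X$, which makes $S$ a closed discrete infinite subset of the compact set $K$ — impossible, since a closed subset of a compact space is compact and an infinite discrete space is not compact. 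This contradiction forces $K\subset X_n$ for some $n$, completing the proof. $\blacksquare$
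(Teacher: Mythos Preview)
The paper does not actually prove this proposition: it is quoted verbatim from \cite{Glo2}, Lemma~1.7, so there is no ``paper's own proof'' to compare against. That said, your argument stands or falls on its own, and parts~1 and~4 are clean and correct, while part~3 is an honest sketch of the standard inductive separation argument.

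There is, however, a genuine gap in your treatment of part~2. You assert that $O\cap X_m=O_m$ and that ``$X_m$ carries the subspace topology from $X_n$''. Neither is true for a general ascending sequence in the paper's sense: the bonding maps are only required to be continuous injections, not topological embeddings (that is the \emph{strict} case). A simple counterexample: take $X_1=\mathbb{R}$ discrete, $X_2=\mathbb{R}$ usual, $O_1=\{0\}$, $O_2=(-1,1)$; then $O\cap X_1=(-1,1)\neq O_1$. More damagingly, you then claim that ``openness of $O_n$ in $X$'' forces a $DL$-open $V\subset O$ to be open in $X$. But $O_n$ need \emph{not} be open in $X$ (in the same example, $O_1=\{0\}$ is not open in $X$, which carries the usual topology). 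Since in general $V\cap X_n\neq V\cap O_n$, knowing each $V\cap O_n$ is open in $O_n$ does not immediately control $V\cap X_n$.

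The fix is short but different from what you wrote: for $V$ $DL$-open in $O$ and any $n$, write
\[
V\cap X_n=\bigcup_{m\ge n}\bigl(V\cap O_m\cap X_n\bigr).
\]
Each $V\cap O_m$ is open in $O_m$, hence open in $X_m$ (since $O_m$ is open in $X_m$), and then $(V\cap O_m)\cap X_n$ is open in $X_n$ by \emph{continuity} of the inclusion $X_n\hookrightarrow X_m$ --- not by any subspace-topology assumption. Thus $V\cap X_n$ is a union of $X_n$-open sets, so $V$ is open in $X$ and the two topologies on $O$ coincide. The same continuity remark repairs your opening computation: $O\cap X_m=\bigcup_{n\ge m}(O_n\cap X_m)$ is open in $X_m$ without ever needing $O\cap X_m=O_m$.
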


Unfortunately, in general, a direct limit  of Hausdorff topological spaces is not Hausdorff (see \cite{Her} for an example of such a situation). Sufficient conditions on $\left( X_n \right)_{n\in\mathbb{N}^*}$  under which the direct limit  $X=\bigcup\limits_{n\in\mathbb{N}^{\ast}}X_{n}$ is Hausdorff can be found in \cite{HaSt}. However, we have:

\begin{proposition}\label{directlimitT4}
Let $(X_{n})_{n\in\mathbb{N}^{\ast}}$ be a strict ascending sequence of topological spaces; equip $X=\bigcup\limits_{n\in\mathbb{N}^{\ast}}X_{n}$ with  the $DL$-topology. Then we have:
\begin{enumerate}
\item Assume that  for each $n$, $X_{n}$ is closed in $X_{n+1}$.

(a) If each $X_n$ is normal then $X$ is normal.

(b) If each $X_n$ is Hausdorff and paracompact then $X$ is normal.\\
In particular, in each previous situation, $X$ is Hausdorff.
\item  Assume that for each $n$,  $X_{n}$ is open in $X_{n+1}$ and Hausdorff, then $X$ is Hausdorff.
\end{enumerate}
\end{proposition}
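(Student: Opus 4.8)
The plan is to treat the two cases separately, using throughout two facts recalled above: for a strict ascending sequence each inclusion $\varepsilon_n\colon X_n\hookrightarrow X$ is a topological embedding, so that the $DL$-topology induces on every $X_n$ its original topology; and, by the universal property of the $DL$- (i.e.\ final) topology, a map out of $X$ is continuous iff its restriction to each $X_n$ is continuous. I will also use that both closedness and openness are transitive along the tower $X_1\subset X_2\subset\cdots$, so that from the hypothesis on consecutive terms one gets $X_n$ closed (resp.\ open) in every $X_m$, $m\ge n$, and hence --- since $\varepsilon_m^{-1}(X_n)$ equals $X_n$ for $m\ge n$ and equals $X_m$ for $m<n$, in either case a closed (resp.\ open) subset of $X_m$ --- that $X_n$ is closed (resp.\ open) in $X$.

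For part~(1), to prove that $X$ is normal I would, given two disjoint closed sets $A,B\subset X$, build a Urysohn function by induction on $n$. Normality of $X_1$ and Urysohn's lemma give a continuous $f_1\colon X_1\to[0,1]$ with $f_1\equiv0$ on $A\cap X_1$ and $f_1\equiv1$ on $B\cap X_1$. Given $f_n$, the set $C_{n+1}=X_n\cup(A\cap X_{n+1})\cup(B\cap X_{n+1})$ is closed in $X_{n+1}$, and the three prescriptions ``$f_n$ on $X_n$'', ``$0$ on $A\cap X_{n+1}$'', ``$1$ on $B\cap X_{n+1}$'' agree on the pairwise overlaps (namely $A\cap X_n$, $B\cap X_n$ and $\emptyset$), so they define a continuous $[0,1]$-valued map on $C_{n+1}$; since $X_{n+1}$ is normal, the Tietze extension theorem produces $f_{n+1}\colon X_{n+1}\to[0,1]$ extending it, with $f_{n+1}|_{X_n}=f_n$. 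The $f_n$ are compatible and glue to a map $f\colon X\to[0,1]$ with $f|_{X_n}=f_n$, continuous by the final-topology property, and with $f|_A\equiv0$, $f|_B\equiv1$; this proves normality. Since moreover each $X_n$ is $T_1$ --- Hausdorff in case~(b), and, ``normal'' being taken in the separated sense, also $T_1$ in case~(a) --- Proposition~\ref{P_TopologicalPropertiesAscendingSequenceTopologicalSpaces}(1) gives that $X$ is $T_1$, so the normal space $X$ is Hausdorff. Case~(b) then follows from~(a) via the classical fact that a paracompact Hausdorff space is normal.

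For part~(2), with $X_n$ open in $X_{n+1}$ for all $n$, transitivity of openness gives $X_n$ open in every $X_m$, $m\ge n$, and applying Proposition~\ref{P_TopologicalPropertiesAscendingSequenceTopologicalSpaces}(2) to the ascending family of open sets $O_k=X_{\min(k,n)}\subset X_k$, whose union is $X_n$, shows both that $X_n$ is open in $X$ and that $X$ induces on $X_n$ its original topology. Then, for distinct $x,y\in X$, choose $n$ with $x,y\in X_n$; since $X_n$ is Hausdorff there are disjoint open sets $U\ni x$ and $V\ni y$ in $X_n$, and, $X_n$ being an open subspace of $X$, the sets $U$ and $V$ are open in $X$ and still disjoint. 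Hence $X$ is Hausdorff.

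The only genuinely non-routine point is the inductive step in part~(1): one must arrange that the function data being patched together are at once defined on a \emph{closed} subset of $X_{n+1}$ and mutually compatible on overlaps, so that Tietze's theorem truly applies. Once this is set up, the rest --- stability of closedness/openness along the tower, continuity via the final topology, the passage from a separating function to separating open sets, and the two standard implications (paracompact Hausdorff $\Rightarrow$ normal, and $T_1$ normal $\Rightarrow$ Hausdorff) --- is routine.
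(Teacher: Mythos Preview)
Your proof is correct. The paper's own argument is much terser: for 1(a) and for part~2 it simply cites Hansen \cite{Han}, Propositions~4.3(i) and~4.2 respectively, and reduces 1(b) to 1(a) via the classical implication ``paracompact Hausdorff $\Rightarrow$ normal''. You instead supply the content behind those citations: the inductive Tietze--Urysohn construction you give for 1(a) is essentially Hansen's argument, and your direct verification in part~2 that each $X_n$ is open in $X$ (so that separating neighbourhoods in $X_n$ remain open in $X$) reproduces the substance of Hansen's Proposition~4.2. Thus the difference is purely one of presentation --- self-contained versus by reference --- and both routes arrive at the same conclusion by the same underlying mechanism.
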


\begin{proof} 1.(a) see \cite{Han} Proposition 4.3. (i).\\
1.(b)  It is  well known that any Hausdorff and paracompact topological space is normal and then such a topological space is Hausdorff.\\
 For part  2.  see  \cite{Han} Proposition 4.2.
\end{proof}

\begin{remark}
\label{ngeqn0} \label{top ascngeqn0} The direct limit of an ascending sequence
$(X_{n})_{n\in\mathbb{N}^{\ast}}$  is equal to the direct
limit of $(X_{n})_{n\in\mathbb{N}^{\ast},\ n\geq n_{0}}$ 
\end{remark}

Let $(X_{n},i_{n}^{m})_{n\leq m,\ m\in\mathbb{N}^{\ast},n\in\mathbb{N}^{\ast}%
}$ and $(Y_{n},j_{n}^{m})_{n\leq m,\ m\in\mathbb{N}^{\ast},n\in\mathbb{N}%
^{\ast}}$ be two ascending sequences of topological spaces. Then assume that
we are given a sequence of maps $f_{n}:X_{n}\longrightarrow Y_{n}$ which is
\textit{consistent}, i.e. that we have for any $n\leq m$, $f_{m}\circ
i_{n}^{m}=j_{n}^{m}\circ f_{n}$. Then these sequences induce a map
$f:X=\underrightarrow{\lim}X_{n}\longrightarrow Y=\underrightarrow{\lim}Y_{n}$
s.t. $f\circ i_{n}=i_{n}\circ f_{n}$ where $i_{n}:X_{n}\longrightarrow
X=\underrightarrow{\lim}X_{n}$ and $j_{n}:Y_{n}\longrightarrow
Y=\underrightarrow{\lim}Y_{n}$ are the associate inclusions respectively.

If every map $f_{n}$ is continuous, then the induced map $f$ is continuous with
respect to the $DL$-topologies on $X$ and $Y$ (continuity criterion,
\cite{HSTH}).

\subsection{\label{*DL_BanachSpaces}Direct limit of Banach spaces}

Let $(E_{n})_{n\in\mathbb{N}^{\ast}}$ be an ascending sequence of Banach
spaces. It is easy to see that we can choose a norm $||\;||_{n}$ on $E_{n}$,
for $n\in\mathbb{N}^{\ast}$, such that:
\[
||\;||_{n+1}\leq||\;||_{n}\text{ on }E_{n}\text{ for each }n\in\mathbb{N}%
^{\ast}.
\]

\medskip

\textit{In this paper, we always make such a choice}.

\medskip

Given such an ascending sequence of Banach spaces, then $E=\bigcup\limits_{n\in
\mathbb{N}^{\ast}}E_{n}$ is called the \textit{direct limit } of this sequence. The finest locally convex vector topology making each inclusion map
$E_{n}\longrightarrow E$ continuous, is called the \textit{locally convex
direct limit topology} and denoted $LCDL$-topology for short.

A convex set $O\subset E$ is open in this topology if and only if $O\cap
E_{n}$ is open in $E_{n}$ for each $n\in\mathbb{N}^{\ast}$.

If $E_{n}$ is a Banach subspace of $E_{n+1}$ for each $n\in\mathbb{N}^{\ast}$, we have a \textit{strict ascending sequence of Banach spaces}.

\begin{definition}
\label{D_LBspace}A locally convex limit of ascending sequence of Banach spaces is called an (LB)-space.

If the sequence is strict we speak of LB-space or strict (LB)-space.
\end{definition}


Of course, an (LB)-space does not have a structure of convenient space in general.
However, since every Banach space is convenient, then each LB-space  has a structure of convenient space  (see \cite{KrMi} Theorem 2.15). In particular the direct limit of an ascending sequence of finite dimensional Banach spaces has a structure of convenient space.









Note that if $E=\underrightarrow{\lim}E_{n}$ and $F=\underrightarrow{\lim
}F_{n}$ are LB-spaces, then we can identify $E\times F=\underrightarrow
{\lim}(E_{n}\times F_{n})$ with $\underrightarrow{\lim}E_{n}\times
\underrightarrow{\lim}E_{n}$ as locally convex topological spaces (cf.
\cite{HSTH}, Theorem 4.3) and so $E\times F$ is a convenient space (cf.
\cite{KrMi}). \\
\medskip
We now consider a general situation which gives rise to a convenient structure on a direct limit of an ascending sequence of Banach spaces.

\begin{proposition}
\label{P_Mixtes_a-b} Let $(E_{n})_{n\in\mathbb{N}^{\ast}}$ be an ascending sequence of
Banach spaces. Assume that there exists an infinite subset
$I\subset\mathbb{N}^{\ast}$ such that $E_{I}=\bigcup\limits_{i\in I}%
E_{i}=\underrightarrow{\lim}E_{i}$ is an LB-space.
Then $E=\bigcup\limits_{n\in\mathbb{N}^{\ast}}E_{n}=E_{I}$ and $E$ is
an LB-space.
\end{proposition}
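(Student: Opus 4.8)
The plan is to show two things: first that the "missing" members $E_n$ with $n \notin I$ do not enlarge the union, so that $E = E_I$ as a set; and second that the $LCDL$-topologies coming from the full sequence $(E_n)_{n\in\mathbb{N}^*}$ and from the subsequence $(E_i)_{i\in I}$ agree, so that the convenient structure on $E_I$ transports to $E$. For the set-theoretic equality, the key observation is that $I$ is \emph{infinite} and cofinal in $\mathbb{N}^*$: given any $n\in\mathbb{N}^*$ there exists $i\in I$ with $n\leq i$, whence $E_n\subset E_i\subset E_I$ (using that the sequence is ascending). Thus $E = \bigcup_{n\in\mathbb{N}^*}E_n \subset E_I$, and the reverse inclusion is trivial, so $E = E_I$ as sets.

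Next I would compare the two topologies. Since $I\subset\mathbb{N}^*$, every inclusion $E_i\hookrightarrow E$ with $i\in I$ is continuous for the $LCDL$-topology of the full sequence, so by the universal property (finest locally convex topology making the $E_i\hookrightarrow E_I$ continuous) the identity $E_I \to E$ is continuous. Conversely, to see the identity $E \to E_I$ is continuous it suffices to check that each inclusion $E_n\hookrightarrow E_I$ is continuous for every $n\in\mathbb{N}^*$; but picking $i\in I$ with $n\leq i$, this inclusion factors as $E_n\hookrightarrow E_i\hookrightarrow E_I$, where the first map is the continuous bonding inclusion of Banach spaces (recall the norms were chosen so that $\|\;\|_{i}\leq\|\;\|_{n}$ on $E_n$, giving continuity, in fact boundedness) and the second is continuous into the $LCDL$-topology on $E_I$ by construction. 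Hence the two $LCDL$-topologies coincide. Finally, Remark~\ref{top ascngeqn0} (direct limits over $n\in\mathbb{N}^*$ and over a cofinal subset coincide in $\mathbb{SET}$ and $\mathbb{TOP}$) makes the identification of $E_I$ with $\underrightarrow{\lim}E_n$ canonical and compatible with the inclusions $E_n\hookrightarrow E$.

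Since $E_I=\underrightarrow{\lim}_{i\in I}E_i$ is by hypothesis an (LBC)-space, i.e. carries a convenient vector space structure compatible with its $LCDL$-topology, and we have just shown $E=E_I$ with the same underlying locally convex space, $E$ inherits exactly that convenient structure and is therefore an (LBC)-space, with $E=\underrightarrow{\lim}E_n$.

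The only genuinely delicate point is the claim that the $LCDL$-topology of the full sequence induces the same topology as that of the subsequence, i.e. that intercalating the extra Banach spaces $E_n$, $n\notin I$, changes neither the underlying vector space nor the final locally convex topology; everything hinges on cofinality of $I$ together with the monotone choice of norms ensuring the bonding maps are continuous. Once that is in place, the convenient (hence $c^\infty$-complete) structure simply comes along for free from $E_I$, and no new completeness argument is required — which is fortunate, since (as noted just before the proposition) a general inductive limit of convenient spaces need not be convenient.
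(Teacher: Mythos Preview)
Your proof is correct and follows essentially the same route as the paper: both use cofinality of the infinite set $I$ in $\mathbb{N}^*$ to get $E=E_I$ as sets, and then argue that the two $LCDL$-topologies coincide. Your presentation via the universal property of the final locally convex topology is slightly cleaner than the paper's, which works directly with (convex) open sets and additionally checks that the $DL$-topologies and the systems of bounded sets agree --- steps not strictly required for the statement but consistent with your conclusion.
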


\begin{proof}
We set $J=\mathbb{N}^{\ast}\setminus I$. An index of $I$ (resp. $J$) will be
denoted $i_{l}$, $l\in\mathbb{N}^{\ast}$ (resp. $j_{k}$, $k\in\mathbb{N}%
^{\ast}$).
In the category of $\mathbb{SET}$, we have $E_{I}=\bigcup\limits_{l\in
\mathbb{N}^{\ast}}E_{i_{l}}=\underrightarrow{\lim}E_{i_{l}}$ and
$E_{J}=\bigcup\limits_{k\in\mathbb{N}^{\ast}}E_{j_{k}}=\underrightarrow{\lim
}E_{j_{k}}$. From our assumption, $E_{I}$ is a convenient space and for any
$i_{l}\in I$ there exists $j_{k}\in J$ such that $E_{i_{l}}\subset E_{j_{k}}$
and conversely. Therefore we have the equality $E_{I}=E_{j}=E$. Thus in the
category $\mathbb{SET}$ we have
\[
E=\underrightarrow{\lim}E_{i_{l}}=\underrightarrow{\lim}E_{j_{k}}.
\]
Consider an open set $O=\bigcup\limits_{l\in\mathbb{N}^{\ast}}O_{i_{l}}$ of
$E_{i}$. Given any $j_{k}\in J$, the space $E_{j_{k}}$ is contained in some
$E_{i_{l_{0}}}$ for $i_{l_{0}}<j_{k}$. Therefore $O\cap E_{j_{k}}=O\cap
O_{i_{l_{0}}}$. As the inclusion $\iota_{i_{k}}^{i_{l_{0}}}:E_{j_{k}
}\longrightarrow E_{i_{l_{0}}}$ is continuous (as composition of a finite
number of continuous inclusions), $O\cap E_{j_{k}}$ is an open set of
$E_{j_{k}}$. It follows that the $DL$-topology of $E=\underrightarrow{\lim
}E_{n}$ and $E_{I}$ coincide. In particular $E$ is Hausdorff.

Of course, the algebraic structure of vector space on each set $E$, $E_{i}$
coincide and then $E$ and $E_{I}$ have the same convex sets. As a set, $O$ is
an open set of the $LCDL$-topology on $E$ if and only if $O$ is convex and
$O\cap E_{n}$ is open in $E_{n}$ for all $n\in\mathbb{N}^{\ast}$. Again, as
each $E_{j_{k}}$ is contained is some $E_{i_{l_{0}}}$ for some $i_{l_{0}%
}>j_{k}$, it follows that $O$ is an open set of the $LCDL$-topology on $E$ if
and only if $O$ is an open set for $LCDL$-topology on $E_{I}$. It follows that
the $LCDL$-topology on $E$ and $E_{I}$ also coincide. Finally, the locally
convex vector spaces $E_{I}$ and $E$ have the same convex bounded sets. It
follows that $E$ is a convenient space with the same structure as $E_{I}$ and
also the same $c^{\infty}$-topology.
\end{proof}

\begin{lemma}
\label{L_LBCspace}${}$ If $E=\underrightarrow{\lim}E_{n}$ is an LB-space,
then for the $LCDL$-topology we have:

\begin{enumerate}
\item[(i)] $E$ is Hausdorff and bounded regular (i.e. every bounded
subset of $E$ is contained in some $E_{n}$). 

\item[(ii)] Let $f:E\longrightarrow F$ be a linear map where $F$ is a Banach
space.\newline 
The following properties are equivalent:

$(1)$ $f$ is bounded;

$(2)$ each restriction $f_{n}$ of $f$ to $E_{n}$ is continuous;

$(3)$ $f$ is continuous.
\end{enumerate}
\end{lemma}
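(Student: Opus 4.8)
\textbf{Proof plan for Lemma \ref{L_LBCspace}.}

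The plan is to deduce everything from the general facts already recalled about (LBC)-spaces and the three model cases (a)--(c), together with standard properties of locally convex inductive limits of Banach spaces. For part (i), I would first observe that Hausdorffness is immediate since an (LBC)-space carries a convenient, hence Hausdorff, structure; alternatively it follows directly from the compatibility of the $LCDL$-topology with the $DL$-topology and Proposition \ref{directlimitT4}. Completeness is simply the $c^\infty$-completeness built into the definition of a convenient space (Definition \ref{D_ConvenientVectorSpace}), re-expressed via the first characterization in the proposition following it: for every absolutely convex closed bounded $B$, the normed space $E_B$ is complete. The real content of (i) is bounded regularity. Here I would argue that a bounded subset $S\subset E=\underrightarrow{\lim}E_n$ is contained in some $E_n$: this is a classical property of (strict) (LB)-spaces, and for the general (LBC)-case it is exactly the statement that bounded sets are ``small'', which one gets by reducing, via Proposition \ref{P_Mixtes_a-b}, to one of the model cases (a), (b) or (c), in each of which boundedness regularity is known (for strict (LB)-spaces it is Dieudonné--Schwartz; for Silva spaces the compact linking maps force it; for the finite-dimensional case it is trivial once one knows bounded sets meet only finitely many ``new'' coordinates).

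For part (ii), the equivalence of (1), (2), (3) for a linear map $f:E\to F$ with $F$ Banach, I would organize it as a cycle. The implication $(3)\Rightarrow(2)$ is trivial: a continuous linear map stays continuous on each subspace $E_n$ with its (finer or equal) Banach topology, because the $LCDL$-topology induces the given Banach topology on each $E_n$ (stated in the excerpt for cases (a)--(c), hence for (LBC)-spaces). For $(2)\Rightarrow(1)$: if each restriction $f_n$ is continuous, then $f_n$ maps bounded sets of $E_n$ to bounded sets of $F$; since by bounded regularity (part (i)) every bounded subset of $E$ lies in some $E_n$, $f$ maps bounded sets to bounded sets, i.e. $f$ is bounded. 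Finally $(1)\Rightarrow(3)$: this is precisely where the convenient/bornological nature enters. An (LBC)-space is bornological — being a locally convex inductive limit of Banach (hence bornological) spaces, it is bornological — so by Definition \ref{D_LCVTBornological} every bounded linear map into a Banach space is continuous. Thus $(1)\Rightarrow(3)$, closing the cycle.

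The main obstacle is the bounded regularity in part (i): it is the only non-formal assertion, and the clean way to handle it is to invoke Proposition \ref{P_Mixtes_a-b} to replace the given ascending sequence by a cofinal subsequence $(E_{i})_{i\in I}$ of one of the three good types, and then cite the corresponding classical boundedness-regularity result. Everything else in the lemma is a recombination of statements already in the excerpt: Hausdorffness and completeness from the convenient structure, $(3)\Rightarrow(2)$ from the coincidence of induced topologies, $(2)\Rightarrow(1)$ from bounded regularity, and $(1)\Rightarrow(3)$ from bornologicity. I would present the proof in that order: (i) first (Hausdorff, complete, then bounded regular), then the cycle $(3)\Rightarrow(2)\Rightarrow(1)\Rightarrow(3)$ for (ii).
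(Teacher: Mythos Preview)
Your cycle for part (ii) is correct and in some ways cleaner than the paper's: you run $(3)\Rightarrow(2)\Rightarrow(1)\Rightarrow(3)$ using bornologicity of inductive limits of Banach spaces for the last step, whereas the paper runs $(1)\Rightarrow(2)\Rightarrow(3)\Rightarrow(1)$ and handles $(2)\Rightarrow(3)$ by the explicit observation that $f^{-1}(B(0,r))=\bigcup_n f_n^{-1}(B(0,r))$ is a convex union of an ascending chain of opens, hence $LCDL$-open. Both arguments are valid; yours trades the concrete ball computation for the general fact that locally convex inductive limits of bornological spaces are bornological.

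There is, however, a genuine gap in your treatment of bounded regularity in (i). Proposition \ref{P_Mixtes_a-b} does \emph{not} let you reduce an arbitrary (LBC)-space to one of the model cases (a), (b), (c): it goes in the opposite direction, saying that if some cofinal subsequence already gives an (LBC)-space then the full sequence does too. The paper even exhibits (LBC)-spaces $E\times F$ with $E$ of type (a) and $F$ of type (b) that are neither strict (LB) nor Silva, and no cofinal subsequence will change that. So your reduction strategy does not work. The paper instead appeals directly to results of Floret (\cite{Flo}, 5.6 and 4.3): for a Hausdorff (LB)-space, $c^\infty$-completeness already forces bounded regularity. That is the step you are missing, and it is the substantive content of part (i); you should replace the reduction argument by a citation of this type (or reproduce the relevant completeness-implies-regularity argument for countable inductive limits of Banach spaces).
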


\begin{proof}
(i) As $E$ is endowed with a convenient structure, it must be Hausdorff. On the other hand,   since $E$ is an LB-space,
it must be
bounded regular (see for example \cite{PeBo}).

(ii) $(1)\Longrightarrow(2)$: If $f$ is bounded, then its restriction $f_{n}$ of $f$ to $E_{n}$ is bounded, so $f_{n}:E_{n}\longrightarrow F$ is continuous. \newline
$(2)\Longrightarrow(3)$: Assume that each restriction $f_{n}$ of $f$ to
$E_{n}$ is continuous. To prove that $f$ is continuous, it is sufficient to
show that for any ball $B(0,r)$ in $F$, then $f^{-1}(B(0,r))$ is an open convex
set of $E$. But we have
\[
f^{-1}(B(0,r))=\bigcup\limits_{n\in\mathbb{N}^{\ast}}f_{n}^{-1}(B(0,r)).
\]
As each $f_{n}$ is continuous, $U_{n}=f_{n}^{-1}(B(0,r))$ is an open set of
$E_{n}$. Moreover, $U_{n}\subset U_{n+1}$. On the other hand, as $B(0,r)$ is
convex, so is $U_{n}$. There $U=f^{-1}(B(0,r))=\bigcup\limits_{n\in
\mathbb{N}^{\ast}}U_{n}$ is also convex. So $U$ is an open set of $E$
(relative to the $LCDL$-topology).
\newline
$(3)\Longrightarrow(1)$: Assume now
that $f$ is continuous and consider a bounded set $B$ of $E$. From part (i),
$B$ is contained in some $E_{n}$. But as the inclusion of $E_{n}$ in $E$ is
continuous, $f_{n}$ is continuous and so $f(B)=f_{n}(B)$ is bounded in
$F$.
\end{proof}

\begin{proposition}
\label{P_LBCctoplctop}${}$ On an LB-space $E=\underrightarrow{\lim}E_{n}$,
the $DL$-topology coincides with the $c^{\infty}$-topology.
\end{proposition}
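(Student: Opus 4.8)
The plan is to prove the two inclusions of topologies separately, using the intrinsic description of the $DL$-topology (a set $O\subset E$ is $DL$-open if and only if $O\cap E_{n}$ is open in $E_{n}$ for every $n$) together with the characterization of $c^{\infty}$-open sets through the Banach discs $E_{B}$ provided by Proposition \ref{P_cinfty-open}; throughout, the $c^{\infty}$-topology of $E$ is the one attached to its (LBC)-convenient structure, i.e. to the $LCDL$-topology.

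First I would show that every $c^{\infty}$-open set is $DL$-open. Fix $n$. Since the $LCDL$-topology is a vector topology for which the inclusion $\iota_{n}\colon E_{n}\hookrightarrow E$ is continuous, $\iota_{n}$ is bounded, hence a conveniently smooth linear map between convenient spaces (\cite{KrMi}); in particular $\iota_{n}$ sends smooth curves of $E_{n}$ to smooth curves of $E$. Hence, if $U\subset E$ is $c^{\infty}$-open and $c\colon\mathbb{R}\to E_{n}$ is smooth, then $c^{-1}(U\cap E_{n})=(\iota_{n}\circ c)^{-1}(U)$ is open in $\mathbb{R}$; as $c$ was an arbitrary smooth curve of $E_{n}$, the set $U\cap E_{n}$ is $c^{\infty}$-open in $E_{n}$, thus norm-open because $E_{n}$ is a Banach space. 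Since $n$ was arbitrary, $U$ is $DL$-open.

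Next I would prove the converse inclusion, which I expect to be the main point, since it is exactly where the convenient (hence bounded regular) structure of the (LBC)-space is needed. Let $U\subset E$ be $DL$-open and let $B\subset E$ be an absolutely convex bounded set; by Proposition \ref{P_cinfty-open} it is enough to check that $U\cap E_{B}$ is open in $E_{B}$. By bounded regularity of $E$ (Lemma \ref{L_LBCspace}(i)), $B$ is contained in, and bounded in, some step $E_{n}$, so $E_{B}\subset E_{n}$. As $E$ is Hausdorff, the Minkowski functional $p_{B}$ is a norm on $E_{B}$, and its open unit ball is contained in $B$ (using absolute convexity of $B$ and $0\in B$); hence the inclusion $\iota\colon E_{B}\hookrightarrow E_{n}$ maps a $p_{B}$-neighbourhood of $0$ into the $E_{n}$-bounded set $B$, so $\iota$ is a bounded, hence continuous, linear map. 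Since $U$ is $DL$-open, $U\cap E_{n}$ is open in $E_{n}$, whence $U\cap E_{B}=\iota^{-1}(U\cap E_{n})$ is open in $E_{B}$. By Proposition \ref{P_cinfty-open}, $U$ is $c^{\infty}$-open.

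Putting the two inclusions together shows that the $DL$-topology and the $c^{\infty}$-topology on $E$ coincide. The genuine obstacle is the second inclusion: for a general, non-regular, inductive limit of Banach spaces a bounded disc $B$ need not sit inside a single step $E_{n}$, so $DL$-openness of $U$ (which is only tested one step at a time) would not control $U\cap E_{B}$; the assumption that $E$ be an (LBC)-space is precisely what removes this difficulty, through Lemma \ref{L_LBCspace}(i) and \cite{Flo}.
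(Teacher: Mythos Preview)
Your proof is correct and follows essentially the same route as the paper's. For the main direction (DL-open $\Rightarrow$ $c^{\infty}$-open) both arguments are identical: use bounded regularity from Lemma~\ref{L_LBCspace}(i) to trap an absolutely convex bounded $B$ inside some $E_{n}$, observe that the inclusion $E_{B}\hookrightarrow E_{n}$ is continuous, and conclude via Proposition~\ref{P_cinfty-open}. For the easier direction ($c^{\infty}$-open $\Rightarrow$ DL-open) the paper is slightly more economical: it simply notes that the closed unit ball $B_{n}\subset E_{n}$ is absolutely convex and bounded in $E$ with $E_{B_{n}}=E_{n}$ isometrically, so Proposition~\ref{P_cinfty-open} gives $U\cap E_{n}$ open directly, without passing through smooth curves; your curve-based argument is of course equally valid.
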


\begin{proof}
Let $B$ be an absolutely convex bounded set of $E$. From Lemma
\ref{L_LBCspace}, $B$ is contained in some $E_{n}$ and then $E_{B}$ is a
vector subspace of $E_{n}$ equipped with the Minkowski norm $p_{B}$. The
closed unit ball $B_{n}$ in $E_{n}$ is also an absolutely convex bounded set
and there exists $\alpha\geq1$ such that $B\subset\alpha.B_{n}$. Remark that
if $p_{n}$ is the given norm on $E_{n}$, then $p_{n}$ is the Minkowski
functional associated to $B_{n} $.
If $v\in E_{B}$, then $\dfrac{v}{p_{B}\left(  v\right)  }$ belongs to
$\alpha.B_{n}$; so we get
\[
p_{n}\leq\alpha p_{B}.
\]
Therefore the inclusion of $E_{B}$ in $E_{n}$ is continuous.
Let $U$ be an open set of the $DL$-topology on $E$. Then for any
$n\in\mathbb{N}^{\ast}$, $U\cap E_{n}$ is open in $E_{n}$. Thus, given any
absolutely convex bounded set $B$ of $E$, if $E_{B}$ is contained in $E_{n}$,
then $U\cap E_{B}=U\cap E_{n}\cap E_{B}$ is open in $E_{B}$. It follows from
Proposition \ref{P_cinfty-open} that $U$ is a $c^{\infty}$-open.\newline
Conversely, if $U$ is a $c^{\infty}$-open, as $E_{n}=E_{B_{n}}$ and the norm
$p_{B_{n}}$ is the given norm on $E_{n}$, again from Proposition
\ref{P_cinfty-open}, it follows that $U$ is an open set of the $DL$-topology
on $E$.
\end{proof}

\medskip

\begin{remark} In Proposition \ref{P_LBCctoplctop}, the fact that an LB-space is regular is essential. More generally, if an (LB)-space is convenient and bounded regular, such a result is also true. However, for the sake of simplicity, we limit ourselves to the LB-space context.
\end{remark}

Recall that from the classical differential calculus in locally convex
topological spaces, for $r\in\mathbb{N}\cup\left\{  {\infty}\right\}  $, the
map $f$ is of class $C^{r}$ ($C^{r}$-map for short) if it is continuous and,
for all $k\in\mathbb{N}$ such that $k\leq r$, the iterated directional
derivatives $d_{k}f(x,y_{1},\cdots,y_{k}):=D_{y_{1}}\cdots D_{y_{k}}f(x)$
exist for all $x\in U$ and $y_{1},\cdots,y_{k}\in E$ and the associated map
$d_{k}f:U\times E^{k}\longrightarrow F$ is continuous. When $r=\infty$ we say
that $f$ is smooth.\newline

We have the following link between $C^{\infty}$-smoothness on each member $E_{n}$ and
conveniently smoothness on $E$ for an LB-space (cf. \cite{Glo2} Lemma 1.9):


\begin{lemma}
\label{L_Cinftycinfty}${}$Let $E=\underrightarrow{\lim}E_{n}$ be an LB-space, 
$U\subset E=\underrightarrow{\lim}E_{n}$ an open set of $E$ (for the
$DL$-topology) and $U_{n}=U\cap E_{n}$ the associated open set in $E_{n}$.
Given a map $f:U\longrightarrow F$ where $F$ is a convenient space, $f$
is conveniently smooth  if and only if $f_{n}=f_{|U\cap E_{n}}$ is
$C^{\infty}$  for each $n \in \mathbb{N}^\ast$.
\end{lemma}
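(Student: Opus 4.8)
The plan is to prove the two implications separately; the ``only if'' part is soft, whereas the ``if'' part carries the real content and hinges on a local factorization of smooth curves through the steps $E_n$. For the ``only if'' part, suppose $f$ is conveniently smooth. The canonical inclusion $\iota_n\colon E_n\hookrightarrow E$ is linear and continuous, hence a bounded linear map, hence conveniently smooth; moreover $\iota_n^{-1}(U)=U\cap E_n=U_n$ is open in $E_n$ because $U$ is $DL$-open and the $DL$-topology is the finest one making all the $\iota_n$ continuous. Consequently $f_n=f\circ\iota_n|_{U_n}\colon U_n\to F$ is a composite of conveniently smooth maps, hence conveniently smooth; and since $U_n$ is open in the Banach (in particular Fr\'echet) space $E_n$, convenient smoothness on $U_n$ coincides with the notion of $C^\infty$ recalled above (existence and joint continuity of the iterated directional derivatives), see \cite{KrMi}. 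Thus $f_n$ is $C^\infty$.

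For the ``if'' part, assume every $f_n$ is $C^\infty$. Since smoothness of a curve is a local property, it suffices to prove that for each $c\in C^\infty(\R,E)$ with $c(\R)\subset U$ and each $t_0\in\R$ there is an open interval $J\ni t_0$ with $f\circ c|_J\in C^\infty(J,F)$. I would use the following local factorization of smooth curves: \emph{for every compact interval $I$ and every $k\in\N$ there is an index $n_k$ with $c(I)\subset E_{n_k}$ and $c|_I\in C^k(I,E_{n_k})$.} Granting this, fix a compact interval $I$ with $t_0$ in its interior $J$. For each $k$ we then have $c(I)\subset E_{n_k}\cap U=U_{n_k}$, so $f\circ c|_I=f_{n_k}\circ(c|_I)$, and this is a curve of class $C^k$ into $F$, being the composite of a $C^k$-curve into the Banach space $E_{n_k}$ with the $C^\infty$-map $f_{n_k}$ (chain rule for maps of class $C^r$). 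Hence $f\circ c|_I$ is of class $C^k$ into $F$ for every $k$; since a curve which is $C^{k+1}$ is $Lip^k$, and a curve which is $Lip^k$ for all $k$ is $C^\infty$ (Proposition~\ref{lipschitz-smooth}), we get $f\circ c|_J\in C^\infty(J,F)$, and therefore $f$ is conveniently smooth.

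The crux, and the step I expect to be the main obstacle, is the local factorization of smooth curves, which is exactly the content of \cite[Lemma 1.9]{Glo2}. The existence of a single step containing $c(I)$ follows from bounded regularity of the (LBC)-space $E$ (Lemma~\ref{L_LBCspace}): the curve $c$ and its derivatives $c',\dots,c^{(k)}$ are continuous into $E$, so $\bigcup_{0\le j\le k}c^{(j)}(I)$ is compact, hence bounded, hence contained in some $E_{n_k}$. The delicate point is to upgrade this to the assertion that $c|_I$ is of class $C^k$ \emph{into the Banach space} $E_{n_k}$, i.e.\ that the difference quotients defining the derivatives of $c$ converge in the norm of $E_{n_k}$ and not merely in the (a priori coarser) topology of $E$. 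This is where the (LBC)-structure is genuinely used: one combines the coincidence of the $DL$- and $c^{\infty}$-topologies (Proposition~\ref{P_LBCctoplctop}) and the bornological behaviour of the steps with the fact that $c$ is $Lip^k$ for all $k$ (Proposition~\ref{lipschitz-smooth}), so that the relevant difference-quotient sets are bounded in $E$ and hence in $E_{n_k}$. I would complete this step by following the argument of \cite{Glo2}.
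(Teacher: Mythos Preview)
Your proposal is correct and follows essentially the same route as the paper's proof: both directions match, and for the ``if'' part you use bounded regularity of the (LBC)-space to factor a smooth curve locally through some step $E_{N}$ with enough regularity, then compose with the $C^\infty$ map $f_N$ and invoke the $Lip^k$-characterisation of smoothness. The paper carries out the ``delicate point'' you flag (that the curve is actually $Lip^k$ \emph{into} $E_N$) by bounding the difference-quotient sets of all $\gamma^{(j)}$, $j\le k$, in some common $E_N$ via bounded regularity, and then using primitives in the Banach space $E_N$ to identify the $E$-derivatives with $E_N$-derivatives; this is exactly the argument from \cite[Lemma~1.9]{Glo2} you propose to follow, so there is no substantive difference.
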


\begin{proof}
This proof is an adaption of the proof of Lemma 1.9 of \cite{Glo2}.\newline
First, note that on an open set of a Banach space, we have equivalence between
conveniently smoothness and $C^{\infty}$-differentiability (cf. \cite{Bom}). \newline
Assume that $f$ is conveniently smooth on $U$. Given any smooth curve $\gamma:\mathbb{R}\longrightarrow U\cap E_{n}$, then $\gamma$ is a smooth curve in $U$,
so $f\circ\gamma$ is smooth. As $U\cap E_{n}$ is an open set of a Banach space, it
follows that $f_{n}=f_{|U\cap E_{n}}$ is $C^{\infty}$. \newline

Conversely \footnote{We are grateful to the anonymous referee for this part of the proof.}  assume that $f_{n}=f_{|U\cap E_{n}}$ is $C^{\infty}$ and  let $\gamma
:\mathbb{R}\longrightarrow U$ be a smooth curve.
Fix  $ a < b$ and  $k \in  \N$. From the bounded regularity of $E$, it follows that there exists $N\in \N$,  $N\geq n$, such that all the  sets
$$\{\displaystyle \frac{(\gamma^{(j)}(t) - \gamma^{(j)}(s))}{s-t}: (s,t) \in ]a, b[\times]a, b[, s\not= t \} \;\;\textrm{ and } \gamma^{(j)}(]a, b[)$$
where $\gamma^{(j)}:\R\longrightarrow E$ is the $j^{th}$ derivative of $\gamma$, are contained and bounded in $E_N$
for all $j=0,\dots,k$. Since  $\gamma^{(j)}_{|]a,b]}$ is a Lipschitz curve in the Banach space $E_N$, so there exists a primitive
 $\eta_j$ on $]a,b[$. Thus $\eta_j$ and  $\gamma^{(j-1)}_{|]a,b]}$ have the same derivative $\gamma^{(j)}_{|]a,b]}$ and so  these curves  differ by a constant for all $j=1,\dots,k$.
 We conclude that $\gamma^{(j-1)}:]a,b[\longrightarrow E_N$ is a $C^1$-curve whose derivative  is $\gamma^{(j)}_{|]a,b]}$. As a consequence, $\gamma_{|]a,b]}$ is a $Lip^k$-curve in $E_N$.
 We can choose $b-a$ small enough  such that $\gamma(]a,b[)\subset U\cap E_n\subset U\cap E_N$. Since  the restriction of $f$ to $U\cap E_n$ is smooth, it follows that, for any $k\in \N$, $f\circ \gamma:]a,b[\longrightarrow \R$ is $Lip^k$ for any $a<b$ where $b-a$ is small enough. From Proposition \ref{lipschitz-smooth}, we get that  $f\circ \gamma$ is smooth in $E$ and so $f$ is  conveniently smooth (cf. Definition \ref{D_SmoothMapBetweenLCTVS}).
  \end{proof}

\begin{proposition}
\label{P_DirectLimitMap}${}$Let $E=\underrightarrow{\lim}E_{n}$ and
$F=\underrightarrow{\lim}F_{n}$ be LB-spaces and $U_{1}\subset\cdots\subset
U_{n}\subset\cdots$ an ascending sequence $(U_{n})$ of open sets of $(E_{n})$
and set $U=\bigcup\limits_{n\in\mathbb{N}^{\ast}}U_{n}$. Assume that we have
a sequence of $f_{n}:U_{n}=U\cap E_{n}\longrightarrow F_{n}$ which are
$C^{\infty}$.

\begin{enumerate}
\item[(i)] Then $f=\underrightarrow{\lim}f_{n}$ is a conveniently smooth map from $U$ to
$F$.
\item[(ii)] Let $f_{n}:E_{n}\longrightarrow F_{n}$ be a sequence of continuous
linear maps; then $f$ is a linear map from $E$ to $F$ which is conveniently smooth and continuous for the $DL$-topologies.
\end{enumerate}
\end{proposition}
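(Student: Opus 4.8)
The plan is to reduce both assertions of Proposition \ref{P_DirectLimitMap} to results already established in the excerpt, principally Lemma \ref{L_Cinftycinfty}, Proposition \ref{P_TopologicalPropertiesAscendingSequenceTopologicalSpaces}, and Lemma \ref{L_LBCspace}. First, for part (i), I would observe that the consistency hypothesis $f_{m}\circ i_{n}^{m}=j_{n}^{m}\circ f_{n}$ (here built into the condition $f_n = f_{|U\cap E_n}$, i.e. the $f_n$ patch together because $f_{n+1}$ restricts to $f_n$ on $U_n$) guarantees that the set-theoretic direct limit map $f=\underrightarrow{\lim}f_n : U \to F$ is well defined. Next I would check that $U=\bigcup_{n}U_n$ is a $c^\infty$-open (equivalently $DL$-open) subset of $E$: since $U_n\subset U_{n+1}$ and each $U_n$ is open in $E_n$, part 2 of Proposition \ref{P_TopologicalPropertiesAscendingSequenceTopologicalSpaces} applies directly. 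Then the conclusion that $f$ is conveniently smooth is \emph{exactly} the content of Lemma \ref{L_Cinftycinfty}: the hypothesis there is precisely that $f_n = f_{|U\cap E_n}$ is $C^\infty$ for each $n$, and the target $F$ is a convenient space as an (LBC)-space, so Lemma \ref{L_Cinftycinfty} gives conveniently smoothness of $f$ with no further work.

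For part (ii), I would first note it is the special case $U_n = E_n$, $U = E$ of part (i), so $f=\underrightarrow{\lim}f_n$ is conveniently smooth from $E$ to $F$ by (i). Linearity of $f$ is immediate: for $x,y\in E$ there is an $n$ with $x,y\in E_n$, and then $f(x+y)=f_n(x+y)=f_n(x)+f_n(y)=f(x)+f(y)$, and similarly for scalars; this is where bounded regularity of $E$ (Lemma \ref{L_LBCspace}(i)) is implicitly used to put any two elements in a common $E_n$ — though in the ascending-sequence picture two elements trivially lie in a common member. The remaining claim is continuity of $f$ for the $DL$-topologies. I would argue componentwise against $F=\underrightarrow{\lim}F_m$: composing $f$ with a continuous linear functional, or more directly, using the continuity criterion for maps between direct limits of topological spaces recalled just before Section 2.3 (``If every map $f_n$ is continuous, then the induced map $f$ is continuous with respect to the $DL$-topologies''), since each $f_n:E_n\to F_n$ is continuous by hypothesis and the family $(f_n)$ is consistent. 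This gives continuity of $f:E\to F$ immediately.

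I do not expect a genuine obstacle here, since every ingredient has been prepared in the preceding pages; the ``hard part'' is merely bookkeeping — verifying that the patching condition makes $\underrightarrow{\lim}f_n$ well defined and that $U$ is $c^\infty$-open, after which Lemma \ref{L_Cinftycinfty} does all the analytic work for smoothness and the continuity criterion for direct limits does the work for continuity. If one wanted to avoid quoting the continuity criterion, an alternative for the continuity half of (ii) is to check directly that $f^{-1}(O)$ is open in $E$ for $O$ $DL$-open in $F$: write $f^{-1}(O)=\bigcup_n f_n^{-1}(O\cap F_n)$, note each $f_n^{-1}(O\cap F_n)$ is open in $E_n$ by continuity of $f_n$, the union is increasing, and — using that $f$ is \emph{linear} so that preimages of convex sets are convex, combined with Lemma \ref{L_LBCspace}(ii) — conclude openness in the $LCDL$-topology, hence (by Proposition \ref{P_LBCctoplctop}) in the $DL=c^\infty$-topology. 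Either route closes the proof.
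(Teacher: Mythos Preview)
Your proposal is correct and follows essentially the same route as the paper: for (i) you invoke Proposition \ref{P_TopologicalPropertiesAscendingSequenceTopologicalSpaces} to get that $U$ is $DL$-open and then apply Lemma \ref{L_Cinftycinfty}, and for (ii) you observe linearity directly, obtain continuity from the continuity criterion for direct limits, and deduce convenient smoothness from part (i) since continuous linear maps between Banach spaces are $C^\infty$. Your alternative argument for continuity via convex preimages is a harmless elaboration but not needed.
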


\begin{proof}
(i) According to Proposition
\ref{P_TopologicalPropertiesAscendingSequenceTopologicalSpaces}, $U$ is an open
set of $E$ and $U=\underrightarrow{\lim}U_{n}$. So $f=\underrightarrow{\lim
}f_{n}:U\longrightarrow F$ is a well defined continuous map (for the
$DL$-topology). From Lemma \ref{L_Cinftycinfty}, $f$ is then a conveniently smooth map
from $U$ to $F$.\newline

Under the assumption of (ii), the associated map $f=\underrightarrow{\lim
}f_{n}$ is linear and continuous. Now, as $f_{n}$ is continuous linear between
Banach spaces, $f_{n}$ must be $C^{\infty}$ and by part (i), $f$ must be
conveniently smooth.
\end{proof}

\section{\label{*DL_Manifolds}Direct limit of manifolds}

\subsection{Direct limit of ascending sequence of Banach manifolds}

Let $M$ be a n.n.H. convenient manifold modeled on a convenient space $E$ and $TM$
its kinematic tangent bundle (cf. \cite{KrMi}, 28.1).
\newline
We adapt  to our context the notion of weak submanifold used in \cite{Pel}.

\begin{definition}
A \textit{weak submanifold} of $M$ is a pair $(N,\varphi)$ where $N$ is a
n.n.H. convenient connected  manifold (modeled on a convenient space $F$) and
$\varphi:N\longrightarrow M$ is a conveniently smooth map such that:

--- there exists a continuous injective linear map $i:F\longrightarrow E$ (for
the structure of l.c.v.s. of $E$)

--- $\varphi$ is an injective conveniently smooth map and the tangent map
$T_{x}\varphi:T_{x}N\longrightarrow T_{\varphi(x)}M$ is an injective
continuous linear map with closed range  for all $x\in N$.
\end{definition}

Note that for a weak submanifold $\varphi:N\longrightarrow M$, on the subset
$\varphi(N)$ of $M$, we have two topologies:

--- the induced topology from $M$;

--- the topology for which $\varphi$ is a homeomorphism from $N$ to
$\varphi(N)$.
\newline
With this last topology, via $\varphi$, we get on $\varphi(N)$ a n.n.H.
convenient manifold  structure modeled on $F$. Moreover, the inclusion from
$\varphi(N)$ into $M$ is continuous as a map from the  manifold
$\varphi(N)$ to $M$. In particular, if $U$ is an open set of $M$, then
$\varphi(N)\cap U$ is an open set for the topology of the  manifold on
$\varphi(N)$. Therefore, if $M$ is Hausdorff so is $\varphi(N)$. \newline

\begin{lemma}
\label{L_asEn} Let $\mathcal{M}=(M_{n})_{n\in\mathbb{N}^{\ast}}$ be an
ascending sequence of Banach $C^{\infty}$-manifolds, where $M_n$ is modeled on the Banach
space $E_{n}$ and where the inclusion $\varepsilon_{n}^{n+1}:M_{n}\longrightarrow
M_{n+1}$ is a $C^{\infty}$ injective map such that $\varepsilon
_{n}^{n+1}(M_{n})$ is a weak submanifold of $M_{n+1}$.

\begin{enumerate}
\item[(i)] There exist injective continuous linear maps $\iota_{n}%
^{n+1}:E_{n}\longrightarrow E_{n+1}$ such that $(E_{n})_{n\in\mathbb{N}}$ is
an ascending sequence of Banach spaces.

\item[(ii)] Assume that for $x\in M=\underrightarrow{\lim}M_{n}$, there exists
a family of charts $(U_{n},\phi_{n})$ of $M_{n}$, for each $n\in
\mathbb{N}^{\ast}$, such that:

-- $(U_{n})_{n\in\mathbb{N}^{\ast}}$ is an ascending sequence of chart domains;

-- $\phi_{n+1}\circ\varepsilon_{n}^{n+1}=\iota_{n}^{n+1}\circ\phi_{n}$.

Then $U=\underrightarrow{\lim}U_{n}$ is an open set of $M$ endowed with the $DL$-topology and $\phi
=\underrightarrow{\lim}\phi_{n}$ is a well defined map from $U$ to $E=\underrightarrow{\lim}E_{n}$.
Moreover, $\phi$ is a continuous homeomorphism from $U$ onto the open set
$\phi(U)$ of $E$.
\end{enumerate}
\end{lemma}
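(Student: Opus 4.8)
The plan is to read the linear bonding maps off the weak submanifold structure and then to build the chart $\phi$ as a direct limit, using Proposition \ref{P_TopologicalPropertiesAscendingSequenceTopologicalSpaces} and the continuity criterion for direct limits to handle every topological claim. For (i): by hypothesis each $(M_n,\varepsilon_n^{n+1})$ is a weak submanifold of $M_{n+1}$, and the definition of a weak submanifold already supplies a continuous injective linear map between the two model spaces; I take $\iota_n^{n+1}:E_n\longrightarrow E_{n+1}$ to be such a map. Nothing has to be matched between these maps for different $n$: putting $\iota_n^n=\operatorname{Id}_{E_n}$ and $\iota_n^m=\iota_{m-1}^m\circ\cdots\circ\iota_n^{n+1}$ for $n<m$ gives continuous injective linear maps satisfying $\iota_j^k\circ\iota_i^j=\iota_i^k$, so $(E_n,\iota_n^{n+1})_{n\in\mathbb N^\ast}$ is a direct sequence of Banach spaces with injective bonding maps, i.e. an ascending sequence of Banach spaces after the usual identification of $E_n$ with its image in $E_{n+1}$.

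For (ii), I would first iterate the compatibility hypothesis $\phi_{n+1}\circ\varepsilon_n^{n+1}=\iota_n^{n+1}\circ\phi_n$ to obtain $\phi_m\circ\varepsilon_n^m=\iota_n^m\circ\phi_n$ for all $n\le m$. Since the $U_n$ form an increasing sequence of open subsets of the members of $\mathcal M$, Proposition \ref{P_TopologicalPropertiesAscendingSequenceTopologicalSpaces}(2) shows that $U=\bigcup_n U_n$ is open in $M=\underrightarrow{\lim}M_n$ for the $DL$-topology and that the $DL$-topology of $\underrightarrow{\lim}U_n$ coincides with the topology induced from $M$. The iterated identity says precisely that $(\phi_n)$ is a consistent family, hence it induces $\phi=\underrightarrow{\lim}\phi_n:U\longrightarrow E=\underrightarrow{\lim}E_n$ with $\phi\circ\varepsilon_n=\iota_n\circ\phi_n$, and this $\phi$ is continuous for the $DL$-topologies by the continuity criterion (cf. \cite{HSTH}). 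Injectivity is then immediate: each $\iota_n:E_n\hookrightarrow E$ is injective because it is a direct limit of injective maps, and each $\phi_n$ is a bijection onto its image, so $\phi(x)=\phi(y)$ with $x,y\in U_n$ forces $\phi_n(x)=\phi_n(y)$ and $x=y$.

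It remains to show that $\phi(U)$ is open in $E$ and that $\phi^{-1}$ is continuous. Writing $O_n=\phi_n(U_n)$, which is open in $E_n$, the relation $\iota_n^{n+1}(O_n)=\phi_{n+1}(\varepsilon_n^{n+1}(U_n))=\phi_{n+1}(U_n)\subset\phi_{n+1}(U_{n+1})=O_{n+1}$ shows that $(O_n)$ is an increasing sequence of open sets, so Proposition \ref{P_TopologicalPropertiesAscendingSequenceTopologicalSpaces}(2) applies again and yields that $\phi(U)=\bigcup_n O_n$ is open in $E$ and that its induced topology coincides with the $DL$-topology of $\underrightarrow{\lim}O_n$. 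Reading the same computation backwards shows that the maps $\phi_n^{-1}:O_n\longrightarrow U_n$ form a consistent family of continuous maps, so the continuity criterion gives that $\phi^{-1}=\underrightarrow{\lim}\phi_n^{-1}$ is continuous for the $DL$-topologies, i.e. — after the identifications above — for the topologies induced from $E$ and from $M$. Hence $\phi$ is a homeomorphism of $U$ onto the open subset $\phi(U)$ of $E$.

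The single delicate point is the openness of $\phi(U)$: since the maps $\iota_n^{n+1}$ are merely continuous injective linear maps, not open embeddings, $O_n$ need not be open when transported into $E_{n+1}$, so a purely local argument breaks down. What rescues the proof is that Proposition \ref{P_TopologicalPropertiesAscendingSequenceTopologicalSpaces}(2) only requires each $O_n$ to be open in its own $E_n$ together with the inclusions $O_n\subset O_{n+1}$; that same proposition is also exactly what allows one to replace, on $U$ and on $\phi(U)$, the $DL$-topologies by the topologies induced from $M$ and from $E$, which is what turns the two applications of the continuity criterion into the asserted homeomorphism.
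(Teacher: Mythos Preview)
Your proof is correct and follows essentially the same approach as the paper's own proof: both obtain the linear bonding maps from the weak submanifold hypothesis, apply Proposition~\ref{P_TopologicalPropertiesAscendingSequenceTopologicalSpaces}(2) to show that $U$ and $\phi(U)$ are open with the right topologies, and use the continuity criterion twice (once for $(\phi_n)$ and once for $(\phi_n^{-1})$) to get the homeomorphism. Your version is more carefully written---you explicitly verify consistency of the families and the inclusions $O_n\subset O_{n+1}$, and your closing remark correctly isolates the one point where a naive argument would fail---whereas the paper's proof is a terse sketch of the same steps.
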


Note that, from  Remark \ref{ngeqn0}, the direct limit of $\mathcal{M}%
=(M_{n})_{n\in\mathbb{N}^{\ast}}$ is the same as the direct limit of
$(M_{n})_{n\in\mathbb{N}^{\ast},n\geq n_{0}}$. The result of part (ii) of this
Lemma is still true if there exists an integer $n_{0}$ such that the
assumptions of part (ii) are satisfied for all $n\geq n_{0}$.

\begin{proof}
(i) As $(M_{n},\varepsilon_{n}^{n+1})$ is a weak submanifold of $M_{n}$, there
exists an injective continuous linear map $i_{n}^{n+1}:E_{n}\longrightarrow
E_{n+1}$ for each $n$. Therefore $(E_{n})_{n\in\mathbb{N}^{\ast}}$ is an
ascending sequence of Banach spaces.\newline

(ii) Under the assumption of part (ii), we set $V_{n}=\varphi_{n}(U_{n})$. First, from Proposition
\ref{P_TopologicalPropertiesAscendingSequenceTopologicalSpaces}, as $V_{n}$ is
an open of $E$, we have $U=\bigcup\limits_{n\in\mathbb{N}^{\ast}}%
U_{n}=\underrightarrow{\lim}U_{n}$ and $V=\bigcup\limits_{n\in\mathbb{N}%
^{\ast}}V_{n}=\underrightarrow{\lim}V_{n}$. Moreover, $U$ (resp $V$) is an open
neighborhood of $x$ (resp. $y$). According to the continuity criterion,
$f=\underrightarrow{\lim}f_{n}$ is a continuous map from $U$ to $V$ which is
injective and surjective. As each $f_{n}$ is a homeomorphism, we can apply the
same arguments to the family $f_{n}^{-1}$, which ends the proof.
\end{proof}

\begin{definition}
\label{DLchart} We say that an ascending sequence $\mathcal{M}=(M_{n}%
)_{n\in\mathbb{N}^{\ast}}$ of Banach $C^{\infty}$-manifolds has the direct
limit chart property at $x\in M=\underrightarrow{\lim}M_{n}$ if $(M_{n}%
)_{n\in\mathbb{N}^{\ast}}$ satisfies the assumptions of Lemma \ref{L_asEn}
part (ii).
\end{definition}

Once more, note that the direct limit of $\mathcal{M}=(M_{n})_{n\in
\mathbb{N}^{\ast}}$ is the same as the direct limit of $(M_{n})_{n\in
\mathbb{N}^{\ast},n\geq n_{0}}$ (cf. Remark \ref{ngeqn0}).

\begin{example}
\label{Ex_existDLChart}${}$The existence of a direct limit chart is a natural
requirement which is satisfied in many examples. We give some of them below.

1. According to Theorem 3.1 of \cite{Glo2}, if $(M_{n})_{n\in\mathbb{N}^{\ast}%
}$ is an ascending sequence of $C^{\infty}$ finite dimensional manifolds, then
such a sequence has the direct limit chart property at any $x\in M$.

2. If $M$ is a compact analytic manifold, it is well known that the set
$\operatorname{Diff}(M)$ of analytic diffeomorphisms of $M$ can be described
as a direct limit of an ascending sequence of Banach manifolds $(M_{n}%
)_{n\in\mathbb{N}^{\ast}}$ which has the direct limit chart property for any
point of $\operatorname{Diff}(M)$. Note that, in this case,
$(M_{n})_{n\in\mathbb{N}^{\ast}}$ is modeled on a sequence $(E_{n}%
)_{n\in\mathbb{N}^{\ast}}$ of Banach spaces whose direct limit $E$ is a Silva space
\footnote{A locally convex limit of ascending sequence of Banach spaces where each inclusion is compact is called a Silva space or a (DFS)-space (for dual of Fr\'{e}chet Schwartz space).}.

3. In \cite{Dah}, the reader can find examples of Lie groups which can be
described as direct limits of ascending sequences of Banach manifolds
$(M_{n})_{n\in\mathbb{N}^{\ast}}$ modeled on sequences of Banach spaces
$l^{p}$ whose direct limits $E$ are not Silva spaces.

4. In the introduction of \cite{Glo3}, one can also find many examples of Lie
groups which have the direct limit chart property at each point.

5. Let $(M_{n})_{n\in\mathbb{N}^{\ast}}$ and $(N_{n})_{n\in\mathbb{N}^{\ast}}$
be two ascending sequences of Banach manifolds which have the direct limit
chart property at $x\in M=\underrightarrow{\lim}M_{n}$ and at $y\in
N=\underrightarrow{\lim}N_{n}$ respectively. Then $(M_{n}\times N_{n}%
)_{n\in\mathbb{N}^{\ast}}$ has the direct limit chart property at $(x,y)\in
M\times N=\underrightarrow{\lim}(M_{n}\times N_{n})$. Therefore, given any
Banach manifold $M$ and any ascending sequence $(N_{n})_{n\in\mathbb{N}^{\ast
}}$ of $C^{\infty}$ finite dimensional manifolds, $(M\times N_{n}%
)_{n\in\mathbb{N}^{\ast}}$ has the direct limit chart property at any point of
$M\times N=\underrightarrow{\lim}(M\times N_{n})$.
\end{example}

We now give a general context under which an ascending sequence $\mathcal{M}%
=(M_{n})_{n\in\mathbb{N}^{\ast}}$ of Banach $C^{\infty}$-manifolds has the
direct limit chart property at each point of $M=\underrightarrow{\lim}M_{n}$.

\begin{proposition}
\label{P_StrongAscending} Let $\mathcal{M}=(M_{n})_{n\in\mathbb{N}^{\ast}}$ be
an ascending sequence of Banach $C^{\infty}$-manifolds modeled on the
ascending sequence $(E_{n})_{n\in\mathbb{N}^{\ast}}$.
We assume that:

(i) ${E}_{n}$ is a complemented subspace in $E_{n+1}$ for each $n\in
\mathbb{N}^{\ast}$;

(ii) there exists a linear connection\footnote{For the definition of a
connection on a Banach manifold see for instance \cite{DoGa}.
For more details see also subsection \ref{banachconnection}.} on $TM_{n}$ for
each $n\in\mathbb{N}^{\ast}$.


\noindent Then $E=\underrightarrow{\lim}E_{n}$ is an LB-space and
$\mathcal{M}$ has the direct limit chart property at each point of
$M=\underrightarrow{\lim}M_{n}$.
\end{proposition}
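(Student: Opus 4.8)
The plan is to build, around an arbitrary point $x\in M=\underrightarrow{\lim}M_{n}$, a compatible family of charts of the type required by Lemma \ref{L_asEn}(ii), using the connections $\nabla^{n}$ to produce the charts and the complementation hypothesis (i) to make them fit together linearly. First I would check the claim that $E=\underrightarrow{\lim}E_{n}$ is an (LBC)-space: since each $E_{n}$ is complemented in $E_{n+1}$, writing $E_{n+1}=E_{n}\oplus F_{n}$ exhibits $E$ as the direct limit of an ascending sequence of Banach spaces for which one can choose norms as in subsection \ref{*DL_TopologicalVectorSpaces}; the complementation lets one control the $LCDL$-topology on each $E_{n}$ and gives $c^{\infty}$-completeness (this is essentially the mechanism behind Proposition \ref{P_PreservationCompletness}, together with the fact that each $E_{n}$ is Banach hence convenient). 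So $E$ carries a convenient structure and Proposition \ref{P_LBCctoplctop} applies.

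Next I would fix $x\in M$. By definition of the direct limit there is a smallest $n_{0}$ with $x\in M_{n_{0}}$; by Remark \ref{ngeqn0} we may relabel and assume $n_{0}=1$, so $x=x_{1}\in M_{1}\subset M_{2}\subset\cdots$, and we write $x_{n}$ for the image of $x$ in $M_{n}$. The key construction is the normal chart (exponential-type chart) at $x_{n}$ determined by the connection $\nabla^{n}$ on $TM_{n}$: on a suitable ball $B(0,r_{n})\subset T_{x_{n}}M_{n}\cong E_{n}$ the exponential map $\exp^{\nabla^{n}}_{x_{n}}$ is a diffeomorphism onto an open neighborhood $U_{n}$ of $x_{n}$, giving a chart $\phi_{n}=(\exp^{\nabla^{n}}_{x_{n}})^{-1}:U_{n}\to B(0,r_{n})\subset E_{n}$. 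The central point is the compatibility $\phi_{n+1}\circ\varepsilon_{n}^{n+1}=\iota_{n}^{n+1}\circ\phi_{n}$, i.e.\ that the normal chart of $M_{n+1}$ restricts along $M_{n}$ to the normal chart of $M_{n}$. I would obtain this by arranging that the sequence of connections is compatible: the restriction of $\nabla^{n+1}$ to $TM_{n}$ (which makes sense because $M_{n}$ is a weak submanifold and $E_{n}$ is complemented in $E_{n+1}$, so there is a well-defined splitting and a projection onto $TM_{n}$) induces $\nabla^{n}$; then the $\nabla^{n+1}$-geodesics with initial velocity in $T_{x_{n}}M_{n}$ stay in $M_{n}$ and coincide with $\nabla^{n}$-geodesics, whence the exponential maps are compatible and so are their inverses. (If the hypothesis only guarantees existence of \emph{some} connection on each $M_{n}$ rather than a compatible sequence, one first replaces the given connections by a compatible family, built inductively by extending $\nabla^{n}$ to $\nabla^{n+1}$ using a $\nabla^{n+1}_{0}$-based partition, projecting with the complement $F_{n}$; this inductive extension is where hypothesis (i) is really used.)

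Having the compatible charts $(U_{n},\phi_{n})$ with $(U_{n})$ ascending and $\phi_{n+1}\circ\varepsilon_{n}^{n+1}=\iota_{n}^{n+1}\circ\phi_{n}$, I would then invoke Lemma \ref{L_asEn}(ii) directly: it yields that $U=\underrightarrow{\lim}U_{n}$ is $DL$-open in $M$, that $\phi=\underrightarrow{\lim}\phi_{n}$ is a homeomorphism of $U$ onto the open set $\phi(U)=\bigcup_{n}\phi_{n}(U_{n})\subset E$, and hence that $\mathcal{M}$ has the direct limit chart property at $x$ in the sense of Definition \ref{DLchart}. Since $x$ was arbitrary, $\mathcal{M}$ has the direct limit chart property at every point of $M$, which is the assertion. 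The main obstacle I anticipate is the compatibility step for the connections and the resulting normal charts: one must be careful that the exponential map of $M_{n+1}$ genuinely preserves $M_{n}$, which requires both that $T_{x_{n}}M_{n}$ is a complemented (hence totally geodesic-admitting) subspace and that the chosen connections restrict correctly; once this is secured, everything else is a matter of assembling ascending unions and citing the already-established Lemma \ref{L_asEn}, Proposition \ref{P_LBCctoplctop}, and Proposition \ref{P_TopologicalPropertiesAscendingSequenceTopologicalSpaces}.
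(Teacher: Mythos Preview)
Your treatment of the (LBC)-space claim is adequate and matches the paper, which simply invokes Proposition~\ref{P_PreservationCompletness} (complementation makes the inclusions closed embeddings, so the strict direct limit is $c^{\infty}$-complete).

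The chart construction, however, follows a genuinely different route from the paper and contains a gap. You try to use the \emph{tangential} exponential map $\exp^{\nabla^{n}}_{x_{n}}$ at the fixed point, and then force compatibility $\phi_{n+1}\circ\varepsilon_{n}^{n+1}=\iota_{n}^{n+1}\circ\phi_{n}$ by making $M_{n}$ totally geodesic in $M_{n+1}$ with induced connection $\nabla^{n}$. That last step is precisely where the argument breaks: the hypothesis gives you only \emph{some} connection on each $TM_{n}$, and your proposed inductive replacement (``extending $\nabla^{n}$ to $\nabla^{n+1}$ using a $\nabla^{n+1}_{0}$-based partition'') tacitly requires bump functions or a partition of unity on $M_{n+1}$ to glue a modified connection near $M_{n}$ with the original one away from it. No smooth regularity or paracompactness of $M_{n+1}$ is assumed in the proposition (those appear only in Example~\ref{CSabc} and Corollary~\ref{ascendingSequenceConnection}), so this modification is not available in general. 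Without it, $\nabla^{n+1}$-geodesics with initial velocity in $T_{x_{n}}M_{n}$ have no reason to stay in $M_{n}$, and your charts need not be compatible.

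The paper avoids this difficulty by using the connection in the opposite direction. It proves a separate Lemma~\ref{L_extchartsubcomp}: given a complemented immersed Banach submanifold $N_{1}\subset N$ and a linear connection on $TN$ (the ambient manifold), any contractible chart $(U_{1},\phi_{1})$ of $N_{1}$ can be extended to a chart $(U,\Phi)$ of $N$ with $U\cap N_{1}=U_{1}$ and $\Phi_{|U_{1}}=\phi_{1}$. The mechanism is a \emph{normal} (tubular-neighbourhood) exponential: trivialize $TN_{|U_{1}}$ over the contractible $U_{1}$, split off a normal subbundle using the complement $F_{2}$ of $F_{1}$ in $F$, and use $\operatorname{Exp}$ on that normal piece (as in Lang, Ch.~IV, Thm.~5.1) to build $\Psi$ and hence $\Phi$. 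The induction then starts with any contractible chart at $x$ in $M_{l_{0}}$ and repeatedly applies the lemma with $N_{1}=M_{n}$, $N=M_{n+1}$, using only the given connection on $TM_{n+1}$ at each stage. No compatibility between the connections on different levels is ever needed, and no partition of unity is invoked; the complementation hypothesis (i) is used for the normal splitting, not to adjust connections.
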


\begin{example}
\label{CSabc} By application of Proposition \ref{P_StrongAscending},
$\mathcal{M}=(M_{n})_{n\in\mathbb{N}^{\ast}}$ has the direct limit chart
property at each point in the following cases:

\begin{enumerate}
\item Each space $M_{n}$ is a paracompact finite dimensional manifold (cf.
\cite{Glo2}).

\item Each manifold $M_{n}$ is a smooth paracompact \footnote{i.e. $M_n$ is paracompact  and every locally finite open covering of  $M_n$  admits a conveniently smooth  partition  of unity subordinated to it.}  Hilbert submanifold of $M_{n+1}$.

\item Each manifold $M_{n}$ is a smooth paracompact Banach submanifold of $M_{n+1}$.


\end{enumerate}
\end{example}

The proof of Proposition \ref{P_StrongAscending} requires the following Lemma:

\begin{lemma}
\label{L_extchartsubcomp} Let $N_{1}$ be a  Banach complemented immersed submanifold,
modeled on $F_{1}$, of a Banach manifold $N$, modeled on $F$ with
$F_{1}\subset F$. Assume that
there exists a linear connection on $TN$. Given any chart $(U_{1},\phi_{1})$
of $x$ in $N_{1}$ such that $U_{1}$ is a contractible set, there exists a
chart $(U,\Phi)$ of $x\in N_{1}$ such that $U$ is contractible, $U\cap
N_{1}=U_{1}$ and $\Phi_{|U_{1}}=\phi_{1}$.
\end{lemma}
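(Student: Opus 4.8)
\textbf{Proof proposal for Lemma~\ref{L_extchartsubcomp}.}

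The plan is to build the extended chart $\Phi$ on a tubular-type neighborhood of $U_1$ in $N$ by using the exponential map of the given linear connection on $TN$ together with the complemented splitting of $F$. First I would fix the chart $(U_1,\phi_1)$ with $\phi_1(U_1)$ a $c^\infty$-open (hence open, since we are in Banach spaces) subset of $F_1$, and write $F=F_1\oplus F_2$ for a topological complement $F_2$. Since $N_1$ is a complemented immersed submanifold, for each $y\in U_1$ the tangent space $T_yN$ splits as $T_yN_1\oplus V_y$ with $V_y$ a closed complement; using the connection (more precisely its parallel transport along the contractible set $U_1$, or directly a local frame of the normal bundle, which exists because $U_1$ is contractible and the normal bundle is a Banach vector bundle over $U_1$) I would trivialize this normal bundle, obtaining a conveniently smooth (here $C^\infty$ in the Banach sense) bundle isomorphism $U_1\times F_2\to NN_1|_{U_1}\subset TN|_{U_1}$.

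Next I would define a map $\Psi:U_1\times F_2\supset W\to N$ by $\Psi(y,v)=\exp^\nabla_y(\sigma(y)v)$, where $\exp^\nabla$ is the exponential of the connection on $TN$ and $\sigma(y):F_2\to V_y\subset T_yN$ is the trivialization above. The key point is the standard fact that $\exp^\nabla$ is a local diffeomorphism near the zero section: its differential at $(y,0)$ is the identity on $T_yN_1$ in the base direction and $\sigma(y)$ (an isomorphism onto $V_y$) in the fiber direction, so by the Banach inverse function theorem $\Psi$ restricts to a diffeomorphism from an open neighborhood $W$ of $U_1\times\{0\}$ onto an open neighborhood $U$ of $U_1$ in $N$; by construction $\Psi(y,0)=y$, so $U\cap N_1\supset U_1$, and shrinking $W$ in the fiber direction only (and, if necessary, replacing $U_1$ by the contractible $U_1$ itself, which is harmless) one arranges $U\cap N_1=U_1$ exactly, because near $U_1$ the submanifold $N_1$ is precisely the image of the zero section. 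Then set $\Phi=(\phi_1\times\mathrm{id}_{F_2})\circ\Psi^{-1}:U\to \phi_1(U_1)\times F_2\subset F_1\oplus F_2=F$; this is a chart of $N$ at $x$, it satisfies $\Phi_{|U_1}=\phi_1$, and one can take $W$ of the form $\{(y,v): v\in r(y)B_{F_2}\}$ with $r$ continuous positive, or simply a product $U_1'\times B_{F_2}(0,\varepsilon)$ after further shrinking, so that $U\cong U_1\times B_{F_2}(0,\varepsilon)$ is contractible (it deformation retracts onto the contractible $U_1$).

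The main obstacle is the global trivialization of the normal bundle $NN_1|_{U_1}$ and the control needed to get $U\cap N_1=U_1$ on the nose rather than just an open subset of $U_1$: contractibility of $U_1$ gives a smooth normal frame (a Banach vector bundle over a contractible paracompact base is trivial), and the exponential map identifies a neighborhood of $U_1$ in $N$ with a neighborhood of the zero section of this normal bundle, inside which $N_1$ meets $U$ exactly in the zero section provided the fiber radius is small enough — this is where one uses that the connection is on $TN$ (not just on $TN_1$) so that $\exp^\nabla$ sends normal directions off $N_1$. Everything else is a routine application of the inverse function theorem in Banach spaces and bookkeeping with the splitting $F=F_1\oplus F_2$. $\blacksquare$
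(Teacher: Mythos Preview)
Your approach is essentially the same as the paper's: both fix a splitting $F=F_1\oplus F_2$, trivialize the normal bundle over the contractible $U_1$, use the exponential map of the connection on $TN$ to build a tubular neighborhood $\Psi$, and then set $\Phi=(\phi_1\times\mathrm{id}_{F_2})\circ\Psi^{-1}$. The one point the paper makes more explicit than your sketch is the passage from the pointwise local diffeomorphisms supplied by the inverse function theorem to a single diffeomorphism on a neighborhood of all of $U_1\times\{0\}$: the paper notes that $U_1$ is paracompact (since $\phi_1(U_1)$ is open in the Banach space $F_1$) and invokes the tubular neighborhood argument of \cite{Lan}, Theorem~IV.5.1, whereas you jump directly to ``$\Psi$ restricts to a diffeomorphism on $W$''.
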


\begin{proof}
Let $N_{1}$ be an immersed complemented submanifold of a Banach manifold $N$.
If $N_{1}$ (resp. $N$) is modeled on $F_{1}$ (resp. $F$), there exists a Banach
subspace $F_{2}$ of $F$ such that $F=F_{1}\oplus F_{2}$.\newline Assume that
there exists a linear connection on $TN$. Therefore we have an exponential map
$\operatorname{Exp}:\mathcal{O}\subset TN\rightarrow N$ where $\mathcal{O}$ is
an open neighborhood of the zero section in $TN$. Note that
$\operatorname{Exp}_{|T_{x}N}$ is a local diffeomorphism.
\newline
Choose a chart $(U_{1},\phi_{1})$ of $N_{1}$ around $x$ such that $U_{1}$ is a
contractible set. Now, as $N_{1}$ is immersed in $N$, for each $z\in U_{1}$,
there exists a chart $(U_{z},\Phi_{z})$ of $N$ such that
\[
\Phi_{z}\left(  U_{z}\cap N_{1}\right)  =\phi_{1}\left(  U_{z}\cap
U_{1}\right)  \times\left\{  0_{F_{2}}\right\}  \text{ and }{\Phi_{z|}}%
_{U_{z}\cap U_{1}}={\phi_{1}}_{|U_{z}\cap U_{1}}%
\]
Then $U={\displaystyle\bigcup\limits_{z\in U_{1}}}U_{z}$ is an open neighborhood containing $U_{1}$ and we have
\[
U\cap N_{1}=\bigcup_{z\in U_{1}}(U_{z}\cap N_{1})=U_{1}.
\]
Therefore $U_{1}$ is a closed submanifold of $U$.\newline

As $U_{1}$ is a contractible set, the restriction of $TN$ to $U_{1}$ is
trivial (cf. \cite{AMR} Theorem 3.4.35). Therefore we have a diffeomorphism
$\Theta:TN_{|U_{1}}\rightarrow U_{1}\times F$. In the trivial bundle
$U_{1}\times F$, we can consider the subbundle $U_{1}\times F_{2}$ and we have
$TN_{|U_{1}}=TU_{1}\oplus\Theta^{-1}(U_{1}\times F_{2})$. As $\phi_{1}(U)$ is
an open set of $F_{1}$ and $F_{1}$ is paracompact, so is $U_{1}$. Therefore by
same arguments used in the proof of Theorem 5.1  chapter  IV of \cite{Lan}, we can
build a diffeomorphism $\Psi$ from an open neighborhood $\mathcal{U}$ of the
zero section of $\Theta^{-1}(U_{1}\times F_{2})$ on an open neighborhood $U$
of $U_{1}$ in $N$. Note that $\mathcal{U}$ is a fibration on the zero section
of $\Phi^{-1}(U_{1}\times F_{2})$. Moreover, from the property of
$\operatorname{Exp}$, we can choose $\mathcal{U}$ such that each fiber is a
contractible set. We denote by $\Phi$ the composition defined by $\Phi^{-1}=$
$\Theta\circ\Psi^{-1}\circ(\left(  \phi_{1}\right)  ^{-1}\times Id_{F_{2}})$.
As $\operatorname{Exp}_{|T_{u}N}(0)\left(  u\right)  =u$ we finally have
$\Phi^{-1}(v,0)=\left(  \phi_{1}\right)  ^{-1}(v)$.
\end{proof}

\begin{proof}
\textit{of Proposition \ref{P_StrongAscending}}${}$\newline
We have already seen that $E=\underrightarrow{\lim}E_{n}$ is an LB-space (cf.
Proposition \ref{P_PreservationCompletness}).

Now consider any point $x\in M=\underrightarrow{\lim}M_{n}$. Then $x$ belongs
to some $E_{n}$. Let $l_{0}$ be the first integer $l$ such that $x$ belongs to
$M_{l}$. Assume that for each integer $l_{0}\leq l\leq k$ we have the
following property: there exists a family of charts $(U_{n},\phi_{n})$ of
$M_{n}$, for each $l_{0}\leq n\leq l$, such that:
\begin{itemize}
\item $(U_{n})_{l_{0}\leq n\leq l}$ is an ascending sequence of chart domains
around $x$;

\item ${\phi_{n+1}}_{|U_{n}}=\phi_{n}$ for all $l_{0}\leq n<l$.
\end{itemize}

From Lemma \ref{L_extchartsubcomp}, this assumption is true for $l=l_{0}+1$.
The proof is obtained by induction using Lemma \ref{L_extchartsubcomp}.
\end{proof}

Now we can prove the following result which generalizes \cite{Glo2}, Theorem 3.1.

\begin{theorem}
\label{T_LBCmanifold} Let $(M_{n})_{n\in\mathbb{N}^{\ast}}$ be an ascending
sequence of Banach $C^{\infty}$-manifolds, modeled on the Banach spaces $E_{n}
$. Assume that $(M_{n})_{n\in\mathbb{N}^{\ast}}$ has the direct limit chart
property at each point $x\in M=\underrightarrow{\lim}M_{n}$ and
$E=\underrightarrow{\lim}E_{n}$ is an LB-space.

Then there is a unique  n.n.H. convenient manifold structure on
$M=\underrightarrow{\lim}M_{n}$ modeled on the convenient space $E$ such that the topology associated to this structure is the  $DL$-topology on $M$. \\
In particular, for each $n\in\mathbb{N}^{\ast}$, the canonical injection $\varepsilon_{n}:M_{n}\longrightarrow M$ is an
injective conveniently smooth map  and $(M_{n},\varepsilon_{n})$ is a weak submanifold of $M$.\\
Moreover, if each $M_n$ is locally compact or is open in $M_{n+1}$ or is a paracompact Banach manifold closed  in $M_{n+1}$, then $M=\underrightarrow{\lim}M_{n}$ is provided with a Hausdorff  convenient manifold structure.\\
\end{theorem}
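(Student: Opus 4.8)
The plan is to construct the claimed n.n.H.\ convenient manifold structure on $M=\underrightarrow{\lim}M_n$ directly from the direct limit charts and then verify the asserted properties one at a time. First I would fix, for each point $x\in M$, a direct limit chart: by the direct limit chart property there is an ascending sequence of chart domains $(U_n,\phi_n)$ around $x$ in $M_n$ with $\phi_{n+1}\circ\varepsilon_n^{n+1}=\iota_n^{n+1}\circ\phi_n$, so that by Lemma \ref{L_asEn}(ii) the set $U=\underrightarrow{\lim}U_n$ is $DL$-open in $M$ and $\phi=\underrightarrow{\lim}\phi_n\colon U\to\phi(U)\subset E$ is a homeomorphism onto a $c^\infty$-open subset of the (LBC)-space $E$ (here I use Proposition \ref{P_LBCctoplctop} to identify the $DL$-topology on $E$ with the $c^\infty$-topology, so that $\phi(U)$ is $c^\infty$-open). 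The collection of all such $(U,\phi)$ is the candidate atlas.

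Next I would check the atlas axioms from the definition preceding Definition \ref{nnhconvenient}. The covering property is immediate since every $x$ lies in some such $U$. For the cocycle condition, given two direct limit charts $(U,\phi)$, $(V,\psi)$ with $W=U\cap V\neq\emptyset$, I would note that $W=\underrightarrow{\lim}(U_n\cap V_n)$ and that the transition map $\psi\circ\phi^{-1}$ restricts on each $\phi_n(U_n\cap V_n)$ to the Banach-space transition map $\psi_n\circ\phi_n^{-1}$, which is $C^\infty$; hence by Lemma \ref{L_Cinftycinfty} (or Proposition \ref{P_DirectLimitMap}(i)) the transition map $\psi\circ\phi^{-1}=\underrightarrow{\lim}(\psi_n\circ\phi_n^{-1})$ is conveniently smooth, and symmetrically for its inverse. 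This gives a maximal $C^\infty$-atlas, hence an n.n.H.\ convenient manifold structure modeled on $E$. For uniqueness, I would argue that the topology induced by this atlas is exactly the $DL$-topology (each $\phi$ is a homeomorphism for the $DL$-topology on $U$ by Lemma \ref{L_asEn}(ii), and the $U$'s form a $DL$-open cover), and that any convenient manifold structure whose topology is the $DL$-topology must contain these charts (the $\varepsilon_n$ are then continuous and the chart condition forces compatibility), so the two maximal atlases coincide.

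Then I would verify that each $\varepsilon_n\colon M_n\to M$ is an injective conveniently smooth weak submanifold embedding: injectivity is built into the ascending sequence; conveniently smoothness follows because in a direct limit chart $\varepsilon_n$ reads as the continuous linear inclusion $\iota_n^{n+1}\circ\cdots$ composed with $C^\infty$ Banach charts, which is conveniently smooth by Lemma \ref{L_Cinftycinfty}; the tangent map $T_x\varepsilon_n$ is, in chart coordinates, a (finite composition of) continuous injective linear map(s) $E_n\hookrightarrow E$, and $E_n\hookrightarrow E$ is continuous for the $LCDL$-topology by construction of the ascending sequence of Banach spaces — so $(M_n,\varepsilon_n)$ satisfies the definition of weak submanifold. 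Finally, for the Hausdorff clause I would invoke the topological results already recorded: if each $M_n$ is locally compact, apply Proposition \ref{P_TopologicalPropertiesAscendingSequenceTopologicalSpaces}(3); if each $M_n$ is open in $M_{n+1}$, apply Proposition \ref{directlimitT4}(2); if each $M_n$ is paracompact Banach and closed in $M_{n+1}$, apply Proposition \ref{directlimitT4}(1)(b) — in each case the $DL$-topology on $M$ is Hausdorff, so the convenient manifold structure is Hausdorff.

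The main obstacle is the conveniently smooth compatibility of the transition maps: one must be sure that "$C^\infty$ on each Banach piece $E_n$" really does upgrade to "conveniently smooth on the (LBC)-space $E$", and this is precisely the content of Lemma \ref{L_Cinftycinfty}, whose converse direction rests on bounded regularity of $E$ and the $Lip^k$ characterization of smoothness (Proposition \ref{lipschitz-smooth}); everything else is bookkeeping about direct limits of sets, topologies, and charts. A secondary subtlety is making sure the domains $\phi(U)$ are genuinely $c^\infty$-open in $E$ rather than merely $DL$-open, which is handled by Proposition \ref{P_LBCctoplctop}.
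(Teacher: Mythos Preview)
Your construction of the atlas, the verification that transition maps are conveniently smooth via Lemma \ref{L_Cinftycinfty}, the use of Proposition \ref{P_LBCctoplctop} to pass from $DL$-open to $c^\infty$-open, and the Hausdorff clause via Propositions \ref{P_TopologicalPropertiesAscendingSequenceTopologicalSpaces} and \ref{directlimitT4} all match the paper's proof essentially line for line. You also spell out the weak-submanifold verification for $(M_n,\varepsilon_n)$, which the paper leaves implicit.

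The one point where your argument diverges from the paper, and where it is weaker, is uniqueness. You assert that ``any convenient manifold structure whose topology is the $DL$-topology must contain these charts \ldots\ so the two maximal atlases coincide,'' but sharing a topology does not by itself force chart compatibility (two distinct smooth structures can live on the same topological space), and you do not say what ``the chart condition'' is or why it forces the transition maps between your direct-limit charts and an arbitrary competing chart to be conveniently smooth. The paper handles uniqueness differently: it proves that $(M,(\varepsilon_n)_{n\in\mathbb{N}^*})$ satisfies the universal property of the direct limit in the category of convenient manifolds, by taking an arbitrary cone $(Y,(h_n))$ and showing that the induced set-map $h\colon M\to Y$ is conveniently smooth (again via Lemma \ref{L_Cinftycinfty} applied in local charts). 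Uniqueness then follows from the universal property. This is the missing ingredient in your sketch; once you replace your uniqueness paragraph with the universal-property argument, your proof coincides with the paper's.
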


A direct application of this theorem gives rise to the following result:
\begin{corollary}
\label{ascendingSequenceConnection} Let $(M_{n})_{n\in\mathbb{N}^{\ast}}$ be
an ascending sequence of Banach  paracompact $C^{\infty}$-manifolds  where $M_n$ is closed in $M_{n+1}$. If the sequence $(M_{n})_{n\in\mathbb{N}^{\ast}}$ satisfies the
assumptions of Proposition \ref{P_StrongAscending}, then $M=\underrightarrow
{\lim}M_{n}$, provided with the $DL$- topology,  has  a unique structure of Hausdorff convenient manifold modeled on an LB-space.
\end{corollary}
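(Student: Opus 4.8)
The plan is to derive Corollary \ref{ascendingSequenceConnection} simply by checking that the hypotheses of Proposition \ref{P_StrongAscending} and of Theorem \ref{T_LBCmanifold} are simultaneously satisfied, and then invoking the "moreover" clause of Theorem \ref{T_LBCmanifold} for the Hausdorff conclusion. First I would observe that by assumption $(M_n)_{n\in\mathbb{N}^\ast}$ satisfies the hypotheses of Proposition \ref{P_StrongAscending}, namely that each $E_n$ is complemented in $E_{n+1}$ and each $TM_n$ carries a linear connection. Proposition \ref{P_StrongAscending} then yields two conclusions at once: $E=\underrightarrow{\lim}E_n$ is an (LBC)-space, and $\mathcal{M}$ has the direct limit chart property at every point of $M=\underrightarrow{\lim}M_n$. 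These are precisely the two hypotheses required by Theorem \ref{T_LBCmanifold}.

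Next I would apply Theorem \ref{T_LBCmanifold} directly. It furnishes a unique n.n.H. convenient manifold structure on $M$ modeled on the convenient space $E$, whose associated topology is the $DL$-topology. For the Hausdorff refinement I would point to the "moreover" clause of that theorem: if each $M_n$ is a paracompact Banach manifold which is closed in $M_{n+1}$, then $M$ carries a Hausdorff convenient manifold structure. Since the present corollary assumes exactly that each $M_n$ is a Banach paracompact $C^\infty$-manifold closed in $M_{n+1}$, that clause applies verbatim and gives the Hausdorff property. Uniqueness is inherited from the uniqueness statement already contained in Theorem \ref{T_LBCmanifold}.

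There is essentially no genuine obstacle here; the corollary is a bookkeeping consequence obtained by composing two previously established results. The only point that deserves a word of care is making sure the chain of implications is used in the right order: one must invoke Proposition \ref{P_StrongAscending} \emph{first}, in order to produce the (LBC)-space property of $E$ and the direct limit chart property, because these are the standing hypotheses of Theorem \ref{T_LBCmanifold}; only then does Theorem \ref{T_LBCmanifold} apply. Once this is said, the proof can be written in a couple of lines: "By Proposition \ref{P_StrongAscending}, $E=\underrightarrow{\lim}E_n$ is an (LBC)-space and $\mathcal{M}$ has the direct limit chart property at every point of $M$. Hence Theorem \ref{T_LBCmanifold} applies and gives a unique n.n.H. convenient manifold structure on $M$ whose topology is the $DL$-topology; since moreover each $M_n$ is a paracompact Banach manifold closed in $M_{n+1}$, the last assertion of Theorem \ref{T_LBCmanifold} shows that this structure is Hausdorff."
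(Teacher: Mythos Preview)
Your proposal is correct and matches the paper's own treatment: the paper introduces this corollary with the sentence ``A direct application of this theorem gives rise to the following result'' and gives no further argument, so the intended proof is exactly the two-step invocation of Proposition~\ref{P_StrongAscending} followed by Theorem~\ref{T_LBCmanifold} (including its ``moreover'' clause) that you describe.
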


\begin{proof}
\textit{ of Theorem \ref{T_LBCmanifold}}.- As in Lemma \ref{L_asEn} part (ii), we consider the set $\mathcal{A}$ of all sequences of charts $\{(U_{n}%
^{\alpha},\phi_{n}^{\alpha})_{n\in\mathbb{N}^{\ast}}\}_{\alpha\in{A}}$ of
$(M_{n})_{n\in\mathbb{N}^{\ast}}$ such that $(U_{n})$ is an ascending sequence
of chart domains. We set $V_{n}^{\alpha}=\phi_{n}^{\alpha}(U_{n}^{\alpha})$,
$U^{\alpha}=\underrightarrow{\lim}U_{n}^{\alpha}$ and $\phi^{\alpha
}=\underrightarrow{\lim}\phi_{n}^{\alpha}$. From Lemma \ref{L_asEn} part (ii),
$\phi^{\alpha}$ is a homeomorphism from $U^{\alpha}$ to the open set
$V^{\alpha}=\phi^{\alpha}(U^{\alpha})=\underrightarrow{\lim}V_{n}^{\alpha}$
of $E$. Then $U^{\alpha}$ and $V^{\alpha}$ are open sets of the
$DL$-topology on $M$ and $E$ respectively (cf. Proposition
\ref{P_TopologicalPropertiesAscendingSequenceTopologicalSpaces} part 2). From
our assumption, $\mathcal{A}$ is then a topological atlas of $M$ modeled on
the convenient space $E$. Note that, from Proposition \ref{P_LBCctoplctop},
each $V^{\alpha}$ is also a $c^{\infty}$-open set.
\newline
Let us prove that the change of charts are conveniently smooth diffeomorphisms.
Consider two charts $\left(  U^{\alpha},\phi^{\alpha}\right)  $ and $\left(
U^{\beta},\phi^{\beta}\right)  $ around $x\in M$. We consider
\[
\tau_{n}^{\beta\alpha}=\phi_{n}^{\beta}\circ\left(  \phi_{n}^{\alpha}\right)
^{-1}:\phi_{n}^{\alpha}\left(  U_{n}^{\alpha}\cap U_{n}^{\beta}\right)
\longrightarrow\phi_{n}^{\beta}\left(  U_{n}^{\alpha}\cap U_{n}^{\beta
}\right).
\]
For each $n\in\mathbb{N}^{\ast}$, the pairs $(U_{n}^{\alpha},\phi_{n}^{\alpha
})$ and $(U_{n}^{\beta},\phi_{n}^{\beta})$ are charts of $M_{n}$ and the
intersection of their domains is not empty. It follows that the map $\tau
_{n}^{\beta\alpha}$ is a $C^{\infty}$ local diffeomorphism of $E_{n}$. But the
construction of $U^{\alpha}$ and $U^{\beta}\ $implies that $U^{\alpha}\cap
U^{\beta}$ is the direct limit of $(U_{n}^{a}\cap U_{n}^{\beta})_{n\in
\mathbb{N}^{\ast}}$ in $M$. It follows that $\phi^{\alpha}\left(  U^{\alpha
}\cap U^{\beta}\right)  $ is the direct limit of $\phi_{n}^{\alpha}\left(
U_{n}^{\alpha}\cap U_{n}^{\beta}\right)  _{n\in\mathbb{N}^{\ast}}$. In the
same way, we have $\phi^{\alpha}\left(  U^{\alpha}\cap U^{\beta}\right)
=\underrightarrow{\lim}\phi_{n}^{\alpha}\left(  U_{n}^{\alpha}\cap
U_{n}^{\beta}\right)  $. Therefore we get a direct limit map $\tau
^{\beta\alpha}=\underrightarrow{\lim}\tau_{n}^{\beta\alpha}$ from the open set
$\phi^{\alpha}\left(  U^{\alpha}\cap U^{\beta}\right)  $ onto the open set
$\phi^{\beta}\left(  U^{\alpha}\cap U^{\beta}\right)  $ of $E$. Again the sets
$\phi^{\alpha}\left(  U^{\alpha}\cap U^{\beta}\right)  $ and $\phi^{\beta
}\left(  U^{\alpha}\cap U^{\beta}\right)  $ are $c^{\infty}$-open sets of $E$.
As each $\tau_{n}^{\beta\alpha}$ is a $C^{\infty}$ diffeomorphism of $E_{n}$,
Lemma \ref{L_Cinftycinfty} implies that $\tau^{\alpha\beta}$ is a conveniently smooth
diffeomorphism from $\phi^{\alpha}\left(  U^{\alpha}\cap U^{\beta}\right)  $
onto $\phi^{\beta}\left(  U^{\alpha}\cap U^{\beta}\right)  $. Therefore we
obtain that $\mathcal{A}$ is  convenient atlas on $M.$ Note that the topology of $M$ defined by such an atlas is exactly the $DL$-topology  on $M$. Therefore, if $M_n$ locally compact, then $M$ is Hausdorff from Proposition \ref{P_TopologicalPropertiesAscendingSequenceTopologicalSpaces}. In the same way, from
Proposition \ref{directlimitT4}, if $M_n$ is open in $M_{n+1}$ or is paracompact and closed in $M_{n+1}$ for each $n$, then $M$ is Hausdorff. Thus in each of the previous particular cases, $M$ is provided with a  Hausdorff convenient manifold structure.\\

Now we prove the uniqueness of this convenient structure. Assume that $Y$ is
a convenient manifold structure  modeled on the convenient vector space $E$ and
$h_{n}:M_{n}\longrightarrow Y$ a $C^{\infty}$ map for each $n\in
\mathbb{N}^{\ast}$ s.t. $\left(  Y,\left(  h_{n}\right)  _{n\in\mathbb{N}%
^{\ast}}\right)  $ is a cone over $\mathcal{S}$. Then there is a uniquely
determined continuous map $h:M\longrightarrow Y$ s.t. $h_{|M_{n}}=h_{n}$. Let
$x\in M$; we can find a chart $\left(  U,\varphi\right)  $ around $x$ in the
atlas $\mathcal{A}$ where $f=\underrightarrow{\lim}f_{n}U$ for charts
$\varphi_{n}:$ $U_{n}\longrightarrow E_{n}$. Let $\psi:W\subset
Y\longrightarrow V$ be a chart for $Y$ ($W$ is an open set of $Y$). Then
$O=\left(  h\circ\varphi^{-1}\right)  ^{-1}\left(  W\right)  $ is an open set
of $U\subset E$ and $O_{n}=O\cap E_{n}$ is open in $E_{n}$ for each $n.$
Consider $g=\psi\circ h\circ\varphi^{-1}|_{O}^{V}:O\longrightarrow V$. Then
$g_{|O_{n}}=\psi\circ h_{n}\circ\varphi_{n}^{-1}|_{O_{n}}^{V}:O_{n}%
\longrightarrow V$ is $C^{\infty}$ for each $n\in\mathbb{N}^{\ast}$. Hence $g$
is $c^{\infty}$ (cf. Proposition \ref{P_DirectLimitMap}), so is $h$ on
the open neighborhood $U$ of $x$ and hence on all of $M$ because $x$ is
arbitrary. Thus $\left(  M,\left(  \varepsilon_{n}\right)  _{n\in N^{\ast}%
}\right)  =\underrightarrow{\lim}S$ in the category of $c^{\infty}$-manifolds.
The uniqueness of a convenient structure of manifold on $M$ follows from the
universal property of direct limits.
\end{proof}

\subsection{\label{*DL_LieGroups}Direct limit of Lie groups}

The reader is referred to \cite{Glo3}. \newline
Interesting infinite-dimensional Lie groups often appears as direct limits $G=\bigcup
\limits_{n\in\mathbb{N}^{\ast}}G_{n}$ of ascending Lie groups $G_{1}\subset
G_{2}\subset\cdots$ where the bonding maps (inclusion maps $\varepsilon
_{n}^{n+1}:G_{n} \longrightarrow G_{n+1}$) are smooth homomorphisms (e.g. the
group $\operatorname{Diff}_{c}\left(  M\right)  $ of compactly supported
diffeomorphisms of a $\sigma-$compact smooth manifold $M$ or the \textit{test
function} groups $C_{c}^{\infty}\left(  M,H\right)  $ of compactly supported
smooth maps with values in a finite-dimensional Lie group $H$).

When the Lie groups $G_{n}$ are finite dimensional it is well known that the
direct limit $\underrightarrow{\lim}G_{n}$ can be endowed with a structure of
Lie group (see \cite{Glo2}).

Here we give conditions on the direct sequences $\mathcal{G}=\left(
G_{n},\varepsilon_{n}^{m}\right)  _{n\in\mathbb{N}^{\ast},\ m\in
\mathbb{N}^{\ast},\ n\leq m}$ of Lie groups in order to obtain a structure of
Lie group on their direct limit.

\bigskip

We first recall the essential notion of \textit{candidate for a direct limit
chart}:

\begin{definition}
\label{D_CandidateDirectLimitChart_LieGroup}Let $G=\bigcup\limits_{n\in
\mathbb{N}^{\ast}}G_{n}$ be the union of an ascending sequence of $C^{\infty
}$-Lie groups $G_{n}$ where the inclusion maps $\varepsilon_{n}^{m}%
:G_{n} \longrightarrow G_{m}$ are $C^{\infty}$-homomorphisms and $G_{n}$ is a
subgroup of $G$.
\newline
 We say that $G$ has a candidate for a direct
limit chart if there exist charts $\phi_{n}:G_{n}\supset U_{n} \longrightarrow
V_{n}\subset\mathfrak{g}_{n}$ of $G_{n}$ around the identity for
$n\in\mathbb{N}^{\ast}$(where $\mathfrak{g}_{n}$ stands for the Lie algebra of
$G_{n}$) such that $U_{n}\subset U_{m}$ and $\phi_{m}|_{Un}=\mathbb{L}\left(
i_{n}^{m}\right)  \circ\phi_{n}$ if $n\leq m$ and $V=\bigcup\limits_{n\in
\mathbb{N}^{\ast}}V_{n}$ is open in the locally convex direct limit
$\underrightarrow{\lim}\mathfrak{g}_{n}$ which we assume to be Hausdorff.
\end{definition}

Gl\"{o}ckner obtains the following result ({cf.} \cite{Glo3} Proposition 1.4.3):

\begin{proposition}
\label{P_DLAscendingSequence-BanachLieGroups}Let $G=\bigcup\limits_{n\in
\mathbb{N}^{\ast}}G_{n}$ be a group which is the union of an ascending
sequence of $C^{\infty}$-Lie groups. Assume that $G$ has a candidate
$\phi:U \longrightarrow V$ $\subset\underrightarrow{\lim}\mathfrak{g}_{n}$ for
a direct limit chart and assume that one of the following conditions is satisfied:

(i) $G_{n}$ is a Banach Lie group for each $n\in\mathbb{N}^{\ast}$ and the
inclusion map $\mathfrak{g}_{n} \longrightarrow \mathfrak{g}_{m}$ is a compact
linear operator for all $n<m$;

(ii) $\mathfrak{g}_{n}$ is a $k_{\omega}-$space for each $n\in\mathbb{N}%
^{\ast}$.
\newline
Then on $G=$ $\underrightarrow{\lim}G_{n}$ there exists  a
unique $C^{\infty}$-Lie group structure making  $\phi_{| W}$ a direct chart limit for $G$ around $1$, where
$W$ is an open neighbourhood of $1$ contained in $U$.
\end{proposition}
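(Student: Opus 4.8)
The plan is to reduce the statement to the general direct-limit-chart machinery of Section~\ref{*DL_Manifolds}, specifically Theorem~\ref{T_LBCmanifold}, and then upgrade the resulting convenient manifold structure to a Lie group structure by checking that multiplication and inversion are conveniently smooth. First I would shrink the candidate chart: starting from $\phi:U\longrightarrow V\subset\underrightarrow{\lim}\mathfrak{g}_n$, I would use continuity of multiplication near $1$ in each $G_n$ together with the direct-limit topology on $G$ to produce a smaller open identity neighbourhood $W\subset U$, with chart domains $W_n=W\cap G_n$, such that $W_n\cdot W_n\subset U_n$ for each $n$ (so that products of elements close to the identity land back in a single chart). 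The point of this shrinking is to be able to express the group operations, read in charts, as direct limits of the corresponding Banach-level operations.

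Next I would establish that $\underrightarrow{\lim}\mathfrak{g}_n$ is an (LBC)-space under either hypothesis~(i) or~(ii). Under~(i), compactness of the inclusions $\mathfrak{g}_n\hookrightarrow\mathfrak{g}_m$ makes $\underrightarrow{\lim}\mathfrak{g}_n$ a Silva space, hence convenient by case~(b) after Definition~\ref{D_LBspace}; under~(ii), a $k_\omega$-space hypothesis on the $\mathfrak{g}_n$ likewise forces the direct limit topology to be well behaved and the limit to be convenient (one invokes the cited Gl\"ockner results, or Proposition~\ref{P_Mixtes_a-b} to pass to a cofinal subsequence of the appropriate type). In either case $E:=\underrightarrow{\lim}\mathfrak{g}_n$ is an (LBC)-space and the $DL$-topology agrees with the $c^\infty$-topology by Proposition~\ref{P_LBCctoplctop}. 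The existence of the candidate chart, translated by left translations $L_g$ on each $G_n$, gives the direct limit chart property (Definition~\ref{DLchart}) at every point of $G$: around $g\in G_{n_0}$ use the charts $\phi_n\circ L_{g^{-1}}$ for $n\ge n_0$, which are compatible with the bonding maps because the $\varepsilon_n^m$ are homomorphisms and $\mathbb{L}(\varepsilon_n^m)$ intertwines the $\phi_n$. Theorem~\ref{T_LBCmanifold} then yields a unique n.n.H.\ convenient manifold structure on $G=\underrightarrow{\lim}G_n$ modeled on $E$ whose topology is the $DL$-topology; one checks Hausdorffness from the assumed Hausdorffness of $\underrightarrow{\lim}\mathfrak{g}_n$ in the candidate chart, which transports to $G$ since the topology is the $DL$-topology and $G$ is $T_1$ (Proposition~\ref{P_TopologicalPropertiesAscendingSequenceTopologicalSpaces}).

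It remains to verify that $G$ is a Lie group for this structure, i.e.\ that $\mu:G\times G\to G$ and $\iota:G\to G$ are conveniently smooth. Here I would use the identification $G\times G=\underrightarrow{\lim}(G_n\times G_n)$ (valid because products of (LBC)-spaces are (LBC)-spaces, cf.\ the discussion after Definition~\ref{D_LBCspace} and \cite{HSTH}) and the fact that the restriction of $\mu$ to $G_n\times G_n$ is $\mu_n$, the $C^\infty$ multiplication of the Banach Lie group $G_n$. Reading $\mu$ in the charts constructed above (on the shrunken domains, so that images stay inside single charts $U_n$), it becomes $\underrightarrow{\lim}$ of the $C^\infty$ maps $\phi_n\circ\mu_n\circ(\phi_n^{-1}\times\phi_n^{-1})$, which by Lemma~\ref{L_Cinftycinfty} (or Proposition~\ref{P_DirectLimitMap}(i)) is conveniently smooth; the same argument handles $\iota$. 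Smoothness away from the identity follows by composing with left/right translations, which are conveniently smooth diffeomorphisms (again a direct-limit-of-$C^\infty$-maps argument). Finally, the chart $\phi_{|W}$ is a direct limit chart for $G$ around $1$ by construction, and uniqueness of the Lie group structure follows from the uniqueness in Theorem~\ref{T_LBCmanifold} together with the fact that a Lie group structure is determined by its underlying manifold structure once the multiplication is continuous. I expect the main obstacle to be the chart-shrinking step combined with the bookkeeping needed to guarantee that, after shrinking, the group operations genuinely restrict on each level $n$ to maps \emph{into a single chart domain} $U_n$ and form a consistent family — this is where the homomorphism property of the bonding maps and the compatibility $\phi_m|_{U_n}=\mathbb{L}(i_n^m)\circ\phi_n$ must be used carefully to avoid a mismatch between the level-$n$ charts and the level-$m$ charts.
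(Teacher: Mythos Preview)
The paper does not prove this proposition: it is quoted verbatim from Gl\"ockner (\cite{Glo3}, Proposition~1.4.3) and no argument is supplied. The only related remark is the sentence after Theorem~\ref{T_LBCLiegroup}, where the authors observe that one case of the proposition can be recovered from their Theorem~\ref{T_LBCLiegroup}; but Theorem~\ref{T_LBCLiegroup} in turn rests on Proposition~\ref{P_DLAscendingSequencecinfty-LieGroups}, which is again a citation from \cite{Glo3}. So there is nothing in the paper to compare your argument to beyond a pointer to Gl\"ockner.

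Your plan is broadly the right shape for case~(i), and in fact it is close to what Gl\"ockner does: under~(i) the limit $\underrightarrow{\lim}\mathfrak{g}_n$ is Silva, hence an (LBC)-space, Theorem~\ref{T_LBCmanifold} applies, and the product compatibility $\underrightarrow{\lim}(\mathfrak{g}_n\times\mathfrak{g}_n)\cong(\underrightarrow{\lim}\mathfrak{g}_n)^2$ (the paragraph after Definition~\ref{D_LBCspace}) lets you push smoothness of $\mu$ and $\iota$ through Lemma~\ref{L_Cinftycinfty}. The chart-shrinking and translation arguments you describe are standard and workable.

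Case~(ii), however, does not fit your framework. Under hypothesis~(ii) the groups $G_n$ are only assumed to be $C^\infty$-Lie groups whose Lie algebras $\mathfrak{g}_n$ are $k_\omega$-spaces; nothing says the $\mathfrak{g}_n$ are Banach, so the (LBC)-space machinery, Theorem~\ref{T_LBCmanifold}, and Proposition~\ref{P_Mixtes_a-b} (which is about ascending sequences of \emph{Banach} spaces) are simply not available. The invocation ``one invokes the cited Gl\"ockner results'' is doing all the work here and is not a proof. Gl\"ockner's actual argument for~(ii) uses the specific property that direct limits and finite products commute in the category of $k_\omega$-spaces, together with the fact that the $DL$-topology on a countable ascending union of $k_\omega$-spaces is again $k_\omega$; these facts are what replace the (LBC) product identification you are relying on, and they lie outside the toolkit developed in this paper.
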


Gl\"{o}ckner gives also results in the convenient setting where $V=\bigcup
\limits_{n\in\mathbb{N}^{\ast}}V_{n}$ is a $c^{\infty}$-open set in
$\underrightarrow{\lim}\mathfrak{g}_{n}$ endowed with a suitable locally convex topology ({cf.} \cite{Glo3}, version arXiv:math/0606078, Remark 14.8):

\begin{proposition}
\label{P_DLAscendingSequencecinfty-LieGroups}Let $G=\bigcup\limits_{n\in
\mathbb{N}^{\ast}}G_{n}$ be a group which is the union of an ascending
sequence of (Hausdorff) convenient Lie groups. Equip the vector space $\underrightarrow
{\lim}\mathfrak{g}_{n}$ with the locally convex vector topology associated
with the direct limit bornology which is assumed to be Hausdorff. We require
that $G$ admits a candidate for a direct limit chart in the convenient sense
and that each bounded subset in $\mathfrak{g}$ is a bounded subset of some
$\mathfrak{g}_{n}$.\newline Then $G$ may be endowed with a structure  n.n.H. convenient Lie group 
\end{proposition}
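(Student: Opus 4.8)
The plan is to follow the classical construction of a Lie-group structure from a chart around the identity, transported by translations, the one genuinely new feature being that the model space $\mathfrak{g}=\underrightarrow{\lim}\mathfrak{g}_n$ is not an (LB)-space of Banach spaces but a bounded-regular direct limit of arbitrary convenient spaces. First I would check that $\mathfrak{g}$ is a convenient vector space: it is Hausdorff by hypothesis, and any Mackey--Cauchy sequence in $\mathfrak{g}$ lies in $E_B$ for some bounded absolutely convex $B$, which by the bounded-regularity hypothesis is contained in some $\mathfrak{g}_n$; as $\mathfrak{g}_n$ is convenient the sequence converges there, hence in $\mathfrak{g}$. From the candidate-chart relation $\phi_m|_{U_n}=\mathbb{L}(i_n^m)\circ\phi_n$, with $\phi_m$ injective and $\phi_n$ a bijection onto a $c^\infty$-open neighbourhood of $0$, each $\mathbb{L}(i_n^m):\mathfrak{g}_n\to\mathfrak{g}_m$ is an injective continuous linear map, so $(\mathfrak{g}_n)_{n\in\N^\ast}$ is an ascending sequence of convenient spaces whose direct limit is bounded regular and $c^\infty$-complete. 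This is exactly the structure an (LBC)-space carries in Sections~\ref{*DL_TopologicalVectorSpaces}--\ref{*DL_Manifolds}, and the statements of Lemma~\ref{L_Cinftycinfty}, Proposition~\ref{P_DirectLimitMap} and Lemma~\ref{L_asEn} hold verbatim here, their proofs using only bounded regularity, Proposition~\ref{P_cinfty-open} and the local Riemann integrability of Lipschitz curves in each level (which holds since each $\mathfrak{g}_n$ is convenient). In particular the candidate chart $\phi=\underrightarrow{\lim}\phi_n$ is a homeomorphism of $U=\bigcup_n U_n$ onto the $c^\infty$-open set $V=\bigcup_n V_n\subset\mathfrak{g}$.

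Next I would build an atlas on $G$ by left translation. For $g\in G$ pick $n_g$ with $g\in G_{n_g}$; for $m\ge n_g$ the left translations $\lambda^m_g$ of $G_m$ are $C^{\infty}$-diffeomorphisms compatible with the inclusion maps, so the maps $h\mapsto\phi_m(g^{-1}h)$ glue to a chart $\psi_g$ with domain $gU=\bigcup_{m\ge n_g}gU_m$ and range $V$. For two such charts the transition $\psi_{g'}\circ\psi_g^{-1}$ restricts on level $m$ to $v\mapsto\phi_m((g')^{-1}g\cdot\phi_m^{-1}(v))$, which is $C^{\infty}$ because $G_m$ is a convenient Lie group; its domain meets each $\mathfrak{g}_m$ in an open set and is hence $c^\infty$-open by bounded regularity and Proposition~\ref{P_cinfty-open}. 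By the (extended) Lemma~\ref{L_Cinftycinfty} these transitions and their inverses are conveniently smooth, so the $\psi_g$ form a $C^{\infty}$-atlas; thus $G$ is a n.n.H. convenient manifold modeled on $\mathfrak{g}$, and as in Theorem~\ref{T_LBCmanifold} its topology is the $DL$-topology.

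Then I would verify that multiplication and inversion are conveniently smooth. The product $G\times G=\underrightarrow{\lim}(G_n\times G_n)$ admits the candidate chart obtained from products of the $\psi_g$, and $\mathfrak{g}\times\mathfrak{g}=\underrightarrow{\lim}(\mathfrak{g}_n\times\mathfrak{g}_n)$ is again convenient and bounded regular, just as for products of (LBC)-spaces in Section~\ref{*DL_TopologicalVectorSpaces}. The maps $\mu_n:G_n\times G_n\to G_n$ and $\nu_n:G_n\to G_n$ are $C^{\infty}$ and consistent with the bonding maps, hence induce $\mu=\underrightarrow{\lim}\mu_n$ and $\nu=\underrightarrow{\lim}\nu_n$; read in the atlas above they are $C^{\infty}$ level by level, so conveniently smooth by the extended Lemma~\ref{L_Cinftycinfty} and Proposition~\ref{P_DirectLimitMap}. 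Thus $(G,\mu,\nu)$ is a group object in the category of n.n.H. convenient manifolds. For Hausdorffness, each $G_n$ is $T_1$, so $G$ is $T_1$ by part~1 of Proposition~\ref{P_TopologicalPropertiesAscendingSequenceTopologicalSpaces} applied to the $DL$-topology; since $\mu$ and $\nu$, being conveniently smooth, are continuous for that topology, $G$ is a $T_1$ topological group and therefore Hausdorff (cf.~\cite{Bou1}). Hence $G$ is a convenient Lie group.

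The step I expect to be the main obstacle is the first one: one must be sure that the two technical results driving everything --- Lemma~\ref{L_Cinftycinfty}, which detects convenient smoothness level by level, and the gluing of maps in Proposition~\ref{P_DirectLimitMap} --- survive the replacement of the Banach blocks $E_n$ by arbitrary convenient spaces $\mathfrak{g}_n$, the only available substitutes for Banach completeness being bounded regularity of $\mathfrak{g}$ and local Riemann integrability of Lipschitz curves in each $\mathfrak{g}_n$ (this is where one re-runs the referee's Lipschitz-curve argument from the proof of Lemma~\ref{L_Cinftycinfty}). A secondary point of care is that each chart overlap $gU\cap g'U$ meets every level $G_m$ in an open set, which is what makes the $c^\infty$-openness of the transition domains available through Proposition~\ref{P_cinfty-open}; the rest is the usual homogeneous-space bookkeeping.
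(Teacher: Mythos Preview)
The paper does not prove this proposition: it is quoted as a result of Gl\"ockner (cf.\ \cite{Glo3}, arXiv version, Remark~14.8), and the only argument the paper supplies is the footnote on Hausdorffness. So there is no ``paper's own proof'' to compare against beyond that footnote, which your final paragraph reproduces correctly ($T_1$ from Proposition~\ref{P_TopologicalPropertiesAscendingSequenceTopologicalSpaces}, then Hausdorff via \cite{Bou1} for topological groups).

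Your proposal is therefore a genuine direct proof where the paper offers none, and the strategy --- translation atlas from a candidate chart, level-by-level smoothness of transitions and of $\mu,\nu$, then Lemma~\ref{L_Cinftycinfty}/Proposition~\ref{P_DirectLimitMap} to pass to the limit --- is the natural one and essentially what underlies Gl\"ockner's argument. You have correctly isolated the one nontrivial point: those two technical lemmas are stated in the paper only for (LBC)-spaces, and here the levels $\mathfrak{g}_n$ are merely convenient. Your justification is sound: the proof of Lemma~\ref{L_Cinftycinfty} uses only bounded regularity of the limit (assumed here) and the fact that Lipschitz curves in each level admit local Riemann integrals, which is one of the equivalent characterisations of $c^\infty$-completeness and hence holds in every $\mathfrak{g}_n$. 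One small point you gloss over: the hypothesis ``candidate for a direct limit chart in the convenient sense'' means $V=\bigcup_n V_n$ is $c^\infty$-open (as the paper states just before the proposition), not merely $LCDL$-open, so you do not need to re-derive this from Proposition~\ref{P_cinfty-open}; but you do need it, together with bounded regularity, to know that the transition domains are $c^\infty$-open, and there your use of Proposition~\ref{P_cinfty-open} is the right tool.
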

This criterion permits to obtain:

\begin{theorem}
\label{T_LBCLiegroup} Let $G=\bigcup\limits_{n\in\mathbb{N}^{\ast}}G_{n}$ be a
group which is the union of an ascending sequence of Banach Lie groups. Assume
that the direct limit $\mathfrak{g}=\underrightarrow{\lim}\mathfrak{g}_{n}$ of the
ascending sequence $(\mathfrak{g}_{n})_{n\in\mathbb{N}^{\ast}}$ of associated
Lie algebras is an LB-space. If $G$ admits a candidate for a direct limit
chart, then $G$ can be endowed with a structure of n.n.H. convenient Lie group modeled on the
LB-space $\mathfrak{g}$.
\end{theorem}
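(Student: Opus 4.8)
The plan is to deduce the statement from Gl\"ockner's convenient criterion, Proposition~\ref{P_DLAscendingSequencecinfty-LieGroups}, by checking that all of its hypotheses are met under the present assumptions. First, each $G_n$ is a Banach Lie group, hence a (Hausdorff) convenient Lie group, so $G=\bigcup_{n\in\mathbb{N}^\ast}G_n$ is the union of an ascending sequence of convenient Lie groups. Next, the Lie algebra $\mathfrak{g}=\underrightarrow{\lim}\mathfrak{g}_n$ is, by hypothesis, an (LBC)-space; thus, equipped with its $LCDL$-topology it is a Hausdorff locally convex space carrying a convenient structure, and by Lemma~\ref{L_LBCspace}(i) it is bounded regular, i.e. every bounded subset of $\mathfrak{g}$ is contained in, and bounded in, some $\mathfrak{g}_n$. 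This is precisely the boundedness requirement appearing in Proposition~\ref{P_DLAscendingSequencecinfty-LieGroups}.

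It remains to promote the given candidate for a direct limit chart to a candidate in the convenient sense. Let $\phi_n\colon U_n\to V_n\subset\mathfrak{g}_n$ be charts of $G_n$ around the identity as in Definition~\ref{D_CandidateDirectLimitChart_LieGroup}, with $U_n\subset U_{n+1}$, $\phi_{n+1}|_{U_n}=\mathbb{L}(\varepsilon_n^{n+1})\circ\phi_n$, and $V=\bigcup_{n\in\mathbb{N}^\ast}V_n$. Since each $V_n$ is open in $\mathfrak{g}_n$ and $V_n\subset V_{n+1}$, Proposition~\ref{P_TopologicalPropertiesAscendingSequenceTopologicalSpaces}(2) shows that $V$ is open in the $DL$-topology of $\mathfrak{g}$; by Proposition~\ref{P_LBCctoplctop}, on the (LBC)-space $\mathfrak{g}$ the $DL$-topology coincides with the $c^\infty$-topology, so $V$ is $c^\infty$-open. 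The bonding maps $\mathbb{L}(\varepsilon_n^m)\colon\mathfrak{g}_n\to\mathfrak{g}_m$ are continuous linear maps between Banach spaces, hence $C^\infty$, so by Proposition~\ref{P_DirectLimitMap}(ii) their direct limit is a conveniently smooth linear map on $\mathfrak{g}$; likewise, since each transition map $\phi_m\circ\phi_n^{-1}$ is a $C^\infty$ diffeomorphism between open sets of Banach spaces, Lemma~\ref{L_Cinftycinfty} shows that $\phi=\underrightarrow{\lim}\phi_n$ and its inverse are conveniently smooth. Hence $\phi$ is a candidate for a direct limit chart for $G$ in the convenient sense.

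With these verifications in hand, Proposition~\ref{P_DLAscendingSequencecinfty-LieGroups} applies and produces a convenient Lie group structure on $G=\underrightarrow{\lim}G_n$ modeled on $\underrightarrow{\lim}\mathfrak{g}_n=\mathfrak{g}$. A priori this structure is only n.n.H., but since each $G_n$ is $T_1$ the direct limit $G$ is $T_1$ as well, and a $T_1$ topological group is Hausdorff (cf. the footnote to Proposition~\ref{P_DLAscendingSequencecinfty-LieGroups}). As $\mathfrak{g}$ is an (LBC)-space, this is exactly the asserted structure of convenient Lie group on $G$ modeled on the (LBC)-space $\mathfrak{g}$.

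The main obstacle --- really the only non-formal point --- is the passage from the \emph{candidate for a direct limit chart}, which is a locally convex, essentially set-theoretic datum, to a candidate \emph{in the convenient sense}: one must know that $V$ is genuinely $c^\infty$-open and that the chart transition and bonding maps are conveniently smooth, not merely continuous. This is where the (LBC) hypothesis is essential, through the coincidence of the $DL$-, $LCDL$- and $c^\infty$-topologies (Proposition~\ref{P_LBCctoplctop}) together with the characterization of conveniently smooth maps on (LBC)-spaces (Lemma~\ref{L_Cinftycinfty}); once this translation is made, the conclusion is immediate from Gl\"ockner's criterion.
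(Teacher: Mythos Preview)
Your proposal is correct and follows essentially the same approach as the paper: the theorem is stated immediately after Proposition~\ref{P_DLAscendingSequencecinfty-LieGroups} with the preface ``This criterion permits to obtain'', so the intended proof is precisely to verify the hypotheses of Gl\"ockner's convenient criterion. You have filled in exactly those verifications---bounded regularity via Lemma~\ref{L_LBCspace}(i), $c^\infty$-openness of $V$ via Proposition~\ref{P_LBCctoplctop}, and the Hausdorff property via the $T_1$ argument in the footnote---which is just what the paper leaves implicit.
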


Therefore, Proposition \ref{P_DLAscendingSequence-BanachLieGroups} assumption
(ii) can be seen as a corollary of this theorem. According to Proposition
\ref{P_Mixtes_a-b}, if $G$ admits a candidate for a direct limit chart then
assume that there exists a countable subset $I\subset\mathbb{N}^{\ast}$
such that the direct limit $\mathfrak{g}_{I}=\underrightarrow{\lim
}\mathfrak{\{}\mathfrak{g}_{i},i\in I\}$ is an LB-space, then $G$ is endowed
with a structure of  convenient Lie group modeled on the LB-space
$\mathfrak{g}$. Now  since $\mathfrak{g}_{n}$ is a $k_{\omega}-$space for each $n\in\mathbb{N}^*$,  by  direct chart limit property and  using the fact that direct limits of ascending sequences of locally $k_\omega$-spaces are locally $k_\omega$-spaces  and so are Hausdorff (see \cite{Glo3}), the topology on $G$ must be Hausdorff.\newline

On the other hand, the reader can find the following criterion in \cite{Dah}:

\begin{theorem}
\label{T_Liegroup} Let $\mathcal{G}=(G_{n})_{n\in\mathbb{N}^{\ast}}$ be an
ascending sequence of Banach Lie groups such that all inclusion maps
$j_{n}:G_{n}\rightarrow G_{n+1}$ are analytic group morphisms and assume that
we have the following properties:

\begin{enumerate}
\item[(a)] For each $n\in\mathbb{N}^{\ast}$, there exists a norm $||\;||_{n}$
on the Lie algebra $\mathfrak{g}_{n}$ defining its Banach space structure,
such that its Lie bracket satisfies the inequality $||[x,y]||_{n}\leq
||x||_{n}||y||_{n}$ for all $x$ and $y$ in $\mathfrak{g}_{n}\mathbb{\ }$and
such that the bounded linear operator $\mathbb{L}(j_{n}):\mathfrak{g}%
_{n}\rightarrow\mathfrak{g}_{n+1}$ has a norm operator bounded by $1$;

\item[(b)] The locally convex structure of vector space $\mathfrak{g}%
=\underrightarrow{\lim}\mathfrak{g}_{n}$ is Hausdorff;

\item[(c)] The map $\exp_{G}=\bigcup\limits_{n\in\mathbb{N}^{\ast}}\exp
_{G_{n}}:\bigcup\limits_{n\in\mathbb{N}^{\ast}}\mathfrak{g}_{n}\longrightarrow
\bigcup\limits_{n\in\mathbb{N}^{\ast}}G_{n}$ is injective on some neighborhood
of $0$.
\end{enumerate}
Then $G=\underrightarrow{\lim}G_{n}=\bigcup\limits_{n\in
\mathbb{N}^{\ast}}G_{n}$ has an analytic structure of Lie group modeled on
$\mathfrak{g}$ and $\exp_{G}$ is an analytic diffeomorphism from some
neighborhood of $0$ to a neighborhood of $1\in G$.

\end{theorem}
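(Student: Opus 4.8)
The plan is to build a real-analytic chart of $G$ around the identity out of the maps $\exp_{G_n}$, using the Baker--Campbell--Hausdorff (BCH) series together with the uniform estimates in $(a)$, and then to promote this local structure to a global Lie group structure on $G$ by the standard ``local data to global Lie group'' construction, condition $(c)$ being exactly what guarantees that the candidate chart is injective. First I would fix the linear setup. The requirement that $\mathbb{L}(j_n):\mathfrak{g}_n\to\mathfrak{g}_{n+1}$ have operator norm $\le 1$ means precisely that $\|x\|_{n+1}\le\|x\|_n$ for $x\in\mathfrak{g}_n$, so $(\mathfrak{g}_n)_{n\in\mathbb{N}^\ast}$ is an ascending sequence of Banach spaces with norm-decreasing inclusions and $\mathfrak{g}=\underrightarrow{\lim}\mathfrak{g}_n$, equipped with the $LCDL$-topology, is an $(LB)$-space; by $(b)$ it is Hausdorff, hence a legitimate model space for analytic calculus. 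From the uniform submultiplicativity $\|[x,y]\|_n\le\|x\|_n\|y\|_n$ and the Dynkin formula for the BCH series $x\ast y=\sum_{k\ge 1}H_k(x,y)$, where $H_k$ is a fixed $k$-homogeneous Lie polynomial, one obtains estimates $\|H_k(x,y)\|_n\le c_k\,(\|x\|_n+\|y\|_n)^k$ with $\sum_k c_k t^k$ of positive radius of convergence and the constants $c_k$ \emph{independent of} $n$. Hence there is $r>0$, independent of $n$, such that $\ast$ converges absolutely on the balls $B_{r,n}=\{x\in\mathfrak{g}_n:\|x\|_n<r\}$ and, after shrinking $r$, $\exp_{G_n}$ restricts to an analytic diffeomorphism of $B_{r,n}$ onto an open subset of $G_n$ with $\exp_{G_n}(x)\exp_{G_n}(y)=\exp_{G_n}(x\ast y)$ for $x,y$ in a smaller ball.

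Next I would record the compatibility. Since $\mathbb{L}(j_n)$ is a Lie algebra homomorphism of operator norm $\le 1$ it intertwines the BCH operations and carries $B_{r,n}$ into $B_{r,n+1}$; and since $j_n$ is an analytic homomorphism, naturality of the exponential gives $j_n\circ\exp_{G_n}=\exp_{G_{n+1}}\circ\mathbb{L}(j_n)$. Thus the family $(\ast_n)$ is consistent, as is $(\exp_{G_n})$, and setting $\Omega=\bigcup_n B_{r,n}=\underrightarrow{\lim}B_{r,n}$, an open subset of $\mathfrak{g}$, we get a direct-limit multiplication $\mu=\underrightarrow{\lim}\ast_n$ on a suitable open sub-domain of $\Omega\times\Omega$ and $\exp_G=\underrightarrow{\lim}\exp_{G_n}:\Omega\to G$, satisfying $\exp_G(\mu(x,y))=\exp_G(x)\exp_G(y)$, $\mu(x,0)=\mu(0,x)=x$ and $\mu(x,-x)=0$. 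The analytic core is then to prove that $\mu$ is analytic: each $H_k$ is a continuous $k$-homogeneous polynomial $\mathfrak{g}\times\mathfrak{g}\to\mathfrak{g}$, the series $\sum_k H_k$ converges on the sub-domain, and the bounds $\|H_k(x,y)\|_n\le c_k(\|x\|_n+\|y\|_n)^k$ are uniform in $n$ --- which is exactly what a direct-limit analyticity criterion (the analytic analogue of Lemma \ref{L_Cinftycinfty}; cf.\ \cite{Dah}, \cite{Glo2}, \cite{Glo3}) needs in order to conclude that $\mu=\underrightarrow{\lim}\ast_n$ is analytic (smoothness of $\mu$ already follows from the $(LB)$-version of Lemma \ref{L_Cinftycinfty}, and analyticity is obtained by upgrading that argument with the coefficient estimates, e.g.\ by complexifying and applying a holomorphic direct-limit lemma to $\underrightarrow{\lim}\mathfrak{g}_{n,\mathbb{C}}$). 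Inversion is the linear --- hence analytic --- map $x\mapsto -x$ in this chart. After shrinking $\Omega$ so that $(c)$ makes $\exp_G|_\Omega$ injective, we transport the analytic structure of $\Omega$ along $(\exp_G|_\Omega)^{-1}$ to the symmetric identity neighbourhood $V_1=\exp_G(\Omega)\subset G$; multiplication and inversion are then analytic on $V_1$ wherever they are defined there.

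It remains to extend this local structure to all of $G$. The one hypothesis still to be checked for the local-to-global theorem is that for every $g\in G$ the inner automorphism $c_g:x\mapsto gxg^{-1}$ is analytic near $1$: choosing $n_0$ with $g\in G_{n_0}$, the restriction of $c_g$ to each $G_n$ with $n\ge n_0$ is the Banach-analytic conjugation of $G_n$, whose derivative at $1$ is the bounded operator $\mathrm{Ad}_{G_n}(g)$; these are consistent in $n$, so $\mathrm{Ad}_G(g)=\underrightarrow{\lim}\mathrm{Ad}_{G_n}(g)$ is a continuous linear endomorphism of $\mathfrak{g}$ (a linear map on an $(LB)$-space being continuous as soon as it is continuous on each step), whence $c_g$ is analytic on a neighbourhood of $1$. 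With the local data $(V_1,\mu,\mathrm{inv})$ and the analyticity of all conjugations in hand, the standard construction of a Lie group from an identity-neighbourhood chart over a locally convex space (cf.\ \cite{Glo3}, \cite{Dah}) yields a unique analytic Lie group structure on $G$ for which $\exp_G|_{\Omega'}$, on a small enough ball $\Omega'\subset\Omega$, is a chart at $1$; its model space is $\mathfrak{g}$, its Lie algebra is $\underrightarrow{\lim}(\mathfrak{g}_n,[\;,\;]_n)$ (read off the second-order term of the commutator map in the chart), and its topology is Hausdorff because $V_1\cong\Omega$ is Hausdorff by $(b)$ and, for a topological group, $\overline{\{1\}}$ equals the intersection of all neighbourhoods of $1$, which is $\{1\}$. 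Finally, for $x\in\mathfrak{g}_n$ the curve $t\mapsto\exp_G(tx)$ is the analytic one-parameter subgroup of $G_n\subset G$ with velocity $x$ at $0$, hence coincides with the one-parameter subgroup produced by the new structure; since $\mathfrak{g}=\bigcup_n\mathfrak{g}_n$, the Lie-theoretic exponential of $G$ equals $\exp_G$, which by construction is an analytic diffeomorphism from a neighbourhood of $0$ in $\mathfrak{g}$ onto a neighbourhood of $1$ in $G$.

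I expect the genuine obstacle to be the analyticity of the direct-limit multiplication $\mu=\underrightarrow{\lim}\ast_n$: one has to convert the $n$-uniform Dynkin estimates into honest analyticity over the $(LB)$-space $\mathfrak{g}$, a setting in which inductive limits of Banach spaces are far less tame than Banach spaces (bounded sets need not lie in a single step, the inverse function theorem fails, and so on), so that condition $(b)$ and a sufficiently robust direct-limit analyticity lemma are essential; the passage from the complex holomorphic picture to real-analyticity must also be handled with care. By contrast, once the conjugations are known to be analytic the local-to-global extension is comparatively routine, the only delicate point being the Hausdorffness of the resulting group topology.
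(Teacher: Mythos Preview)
The paper does not prove this theorem; it merely quotes it as a criterion that ``the reader can find [...] in \cite{Dah}'' and then applies it to establish Theorem~\ref{T_ConvenientGL}. There is therefore no in-paper proof to compare against.

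That said, your sketch is a faithful outline of the BCH-based argument one expects in \cite{Dah}: use the submultiplicative norms and the norm-decreasing inclusions from $(a)$ to obtain Dynkin estimates with coefficients independent of $n$, hence a common radius on which every $\exp_{G_n}$ is an analytic diffeomorphism and the BCH product converges; pass to the direct limit to get a candidate local multiplication on an open set of $\mathfrak{g}$; invoke $(c)$ for injectivity of the chart; verify analyticity of conjugation via $\mathrm{Ad}_G(g)=\underrightarrow{\lim}\mathrm{Ad}_{G_n}(g)$; and finish with the local-to-global construction of a Lie group from identity-neighbourhood data. You have also correctly flagged the one nontrivial point---upgrading the $n$-uniform estimates to genuine analyticity of $\mu$ over the $(LB)$-space $\mathfrak{g}$---which is exactly the analytic direct-limit lemma that \cite{Dah} supplies and on which the whole construction rests.
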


We end this subsection with an application of this result\footnote{This result
is certainly well known by specialists but it is an easy corollary of Theorem
\ref{T_Liegroup} and so we give a proof here.}:

\begin{theorem}
\label{T_ConvenientGL} Let $(E_{n})_{n\in\mathbb{N}^{\ast}}$ be an ascending
sequence of Banach spaces such that $E_{n}$ is a complemented Banach subspace
of $E_{n+1}$. \noindent Then $E=\bigcup\limits_{n\in\mathbb{N}^{\ast}}E_{n}$
is an LB-space and $L(E)=\bigcup\limits_{n\in\mathbb{N}^{\ast}}L(E_{n})$ is
also an LB-space, where $L(E_{n})$ is the Banach space of continuous linear
operators of $E_{n}$. Moreover, $GL(E)=\bigcup\limits_{n\in\mathbb{N}^{\ast}%
}GL(E_{n})$ has a structure of convenient Lie group modeled on $L(E)$, where
$GL(E_{n})$ is the Banach Lie group of linear continuous automorphisms of
$E_{n}$.
\end{theorem}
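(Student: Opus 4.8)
The plan is to treat the three assertions in order; the first two are renormalization/completeness facts and the last is the substantive one, which I would derive from Theorem \ref{T_Liegroup}.

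First, since each $E_n$ is complemented in $E_{n+1}$ it is in particular a closed Banach subspace, so $(E_n)_{n\in\mathbb{N}^\ast}$ is a strict ascending sequence of Banach spaces and $E=\underrightarrow{\lim}E_n$ is $c^\infty$-complete, hence an (LBC)-space, by Proposition \ref{P_PreservationCompletness}. Next I would fix a Banach complement $F_n$ of $E_n$ in $E_{n+1}$ with inclusion $\iota_n:E_n\hookrightarrow E_{n+1}$ and projection $p_n:E_{n+1}\to E_n$, renorming $E_{n+1}$ inductively by $\|e+f\|_{n+1}=\max(\|e\|_n,\|f\|_{F_n})$ for $e\in E_n$, $f\in F_n$, so that $\iota_n$ becomes isometric and $\|p_n\|\le1$. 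Then $\Lambda_n:L(E_n)\to L(E_{n+1})$, $A\mapsto\iota_n\circ A\circ p_n=A\oplus0_{F_n}$, is an isometric embedding onto a closed (indeed complemented, via $S\mapsto\iota_n p_n S\iota_n p_n$) subspace, so $(L(E_n))_{n\in\mathbb{N}^\ast}$ is again a strict ascending sequence of Banach spaces and $L(E)=\underrightarrow{\lim}L(E_n)$ is an (LBC)-space by Proposition \ref{P_PreservationCompletness}.

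For the last assertion, on $G_n:=GL(E_n)$ — the group of units of the Banach algebra $L(E_n)$, open in $L(E_n)$ and a Banach Lie group with Lie algebra $\mathfrak g_n=L(E_n)$ — I would take the bonding maps $j_n:GL(E_n)\to GL(E_{n+1})$, $T\mapsto T\oplus\mathrm{Id}_{F_n}$. These are analytic group homomorphisms (restrictions to the open set $GL(E_n)$ of the continuous affine maps $A\mapsto\Lambda_n A+(0\oplus\mathrm{Id}_{F_n})$), and the induced Lie algebra maps are precisely $\mathbb{L}(j_n)=\Lambda_n$, so $\mathfrak g=\underrightarrow{\lim}\mathfrak g_n=L(E)$ with the structure just described. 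It then remains to verify the three hypotheses of Theorem \ref{T_Liegroup}: (a) rescaling the operator norm on $L(E_n)$ by $2$ gives an equivalent norm with $\|[A,B]\|\le\|A\|\|B\|$ (since $\|AB-BA\|\le2\|A\|\|B\|$) for which, $\Lambda_n$ being isometric, $\mathbb{L}(j_n)$ has operator norm $1$; (b) $\mathfrak g=L(E)$ is Hausdorff by Lemma \ref{L_LBCspace}, being an (LBC)-space; (c) from $\exp_{GL(E_{n+1})}(\Lambda_n A)=e^{A\oplus0}=e^A\oplus\mathrm{Id}_{F_n}=j_n(\exp_{GL(E_n)}A)$ the maps $\exp_{GL(E_n)}$ glue to $\exp_G=\bigcup_n\exp_{GL(E_n)}$, each is injective on a ball around $0$ of a fixed radius $\delta$ (there $\log$ inverts $\exp$), and these balls match under the isometries $\Lambda_n$, so $\exp_G$ is injective on the open neighborhood $\bigcup_n\{\|A\|_n<\delta\}$ of $0$. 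Theorem \ref{T_Liegroup} then yields an analytic Lie group structure on $\bigcup_n GL(E_n)=\underrightarrow{\lim}GL(E_n)$ modeled on $L(E)$; since the model is the (LBC)-space $L(E)$ and all the structure maps are analytic, hence conveniently smooth, this is a convenient Lie group structure.

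I expect the only delicate point to be the simultaneous bookkeeping of the norms: one needs the inclusions $E_n\hookrightarrow E_{n+1}$ to be isometric precisely so that the operator-space maps $\Lambda_n$ are isometric, which is what makes hypothesis (a) of Theorem \ref{T_Liegroup} (operator norm of $\mathbb{L}(j_n)$ at most $1$) and the compatibility of the exponential balls hold at once. This is exactly where the complementedness hypothesis — rather than mere closedness — is used, and getting this normalization right is the crux; everything else (openness and convexity of the auxiliary neighborhoods, the Neumann-series description of $GL(E_n)$, the $\exp$/$\log$ correspondence) is routine.
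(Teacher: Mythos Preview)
Your proposal is correct and follows essentially the same route as the paper: both renorm the $E_n$ so that the inclusions are isometric, embed $L(E_n)\hookrightarrow L(E_{n+1})$ via $A\mapsto A\oplus 0$ isometrically to obtain a strict (LB)-space, rescale the operator norm by $2$ to get the bracket inequality, and then verify the three hypotheses of Theorem~\ref{T_Liegroup} (with the exponential injective on a union of balls of fixed radius). The only cosmetic differences are that the paper uses the sum norm $\|e\|+\|f\|$ rather than your $\max$ norm and isolates the isometry step as a separate lemma; your extra observation that $L(E_n)$ is in fact complemented in $L(E_{n+1})$ is correct but not needed, since closedness already suffices for the strict (LB)/(LBC) conclusion.
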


For the proof of this theorem we need the following lemma:

\begin{lemma}
\label{L_inclusionLEn} Let $E$ and $F$ be two Banach spaces such that $E$ is a
complemented Banach subspace of $F$.
Given a norm $||\;||_{E}$ on $E$, there exists a norm $||\;||_{F}$ on $F$ and
an embedding $\lambda:L(E)\longrightarrow L(F)$ which is an isometry with
respect to the corresponding operator norms on $L(E)$ and $L(F)$ respectively.
\newline Moreover, we have $[\lambda(T),\lambda(T^{\prime})]=\lambda
([T,T^{\prime}])$ where, as classically, the bracket is given by
$[T,T^{\prime}]$ $=T\circ T^{\prime}-T^{\prime}\circ T$.
\end{lemma}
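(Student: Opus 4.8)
The plan is to realise $\lambda$ through the inclusion $\iota\colon E\hookrightarrow F$ and a projection $p\colon F\to E$, after first rechoosing the norm on $F$ so that $p$ becomes a norm-one map. Since $E$ is a complemented Banach subspace of $F$, I would fix a closed topological complement $G$, so that $F=\iota(E)\oplus G$ with continuous projections $p\colon F\to E$ (onto $E$ along $G$) and $q:=\mathrm{Id}_F-\iota\circ p$ (with range $G$), and I would pick any Banach norm $\|\cdot\|_G$ on $G$, for instance the one induced from $F$.

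First I would replace the norm on $F$ by
\[
\|u\|_F:=\|p(u)\|_E+\|q(u)\|_G,\qquad u\in F .
\]
Since $u\mapsto(p(u),q(u))$ is a linear homeomorphism of $F$ onto $E\times G$, this $\|\cdot\|_F$ is a norm equivalent to the original one, so $(F,\|\cdot\|_F)$ is again a Banach space. Two properties are then recorded: $\|\cdot\|_F$ restricts to the given $\|\cdot\|_E$ on $\iota(E)$, because $p\circ\iota=\mathrm{Id}_E$ and $q\circ\iota=0$; and $\|p\|\le 1$, because $\|p(u)\|_E\le\|p(u)\|_E+\|q(u)\|_G=\|u\|_F$.

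Next I would define $\lambda(T):=\iota\circ T\circ p\in L(F)$ for $T\in L(E)$; linearity in $T$ and boundedness are immediate. For the isometry, the estimate $\|\lambda(T)u\|_F=\|T(p(u))\|_E\le\|T\|\,\|p(u)\|_E\le\|T\|\,\|u\|_F$ gives $\|\lambda(T)\|\le\|T\|$, while evaluating on $u=\iota x$ with $\|x\|_E=1$ gives $\|\lambda(T)(\iota x)\|_F=\|Tx\|_E$, whence $\|\lambda(T)\|\ge\|T\|$; so $\|\lambda(T)\|_{L(F)}=\|T\|_{L(E)}$ and $\lambda$ is an isometric, hence injective, embedding. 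For the bracket identity, $p\circ\iota=\mathrm{Id}_E$ yields $\lambda(T)\circ\lambda(T')=\iota\circ T\circ(p\circ\iota)\circ T'\circ p=\iota\circ(T\circ T')\circ p=\lambda(T\circ T')$, so $\lambda$ is in fact an associative algebra homomorphism, and therefore $[\lambda(T),\lambda(T')]=\lambda(T\circ T')-\lambda(T'\circ T)=\lambda([T,T'])$.

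The only real choice, and hence the crux, is the definition of $\|\cdot\|_F$: one needs a Banach norm on $F$ that simultaneously extends the prescribed $\|\cdot\|_E$ and makes the projection onto $E$ have norm at most $1$, and the $\ell^1$-type norm above does exactly this — it is precisely what forces $\lambda$ to be isometric rather than merely bounded. One should note that the unital variant $T\mapsto\iota\circ T\circ p+q$ is \emph{not} isometric when $\|T\|<1$, but it is a group monomorphism $GL(E)\hookrightarrow GL(F)$ whose differential at the identity is $\lambda$; this is the form in which the lemma feeds into the proof of Theorem \ref{T_ConvenientGL}.
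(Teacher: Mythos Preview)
Your proof is correct and is essentially the same as the paper's: both choose the $\ell^1$-type norm $\|x_1+x_2\|_F=\|x_1\|_E+\|x_2\|_{E'}$ on $F=E\oplus E'$, define the embedding by extending $T$ by zero on the complement (your $\iota\circ T\circ p$ is exactly the paper's $\Lambda(T)$), and verify isometry and multiplicativity in the same way. Your write-up is in fact a bit cleaner --- the paper's inequality $\frac{\|\Lambda(T)x\|_F}{\|x\|_F}\le\frac{\|T\Pi(x)\|_F}{\|\Pi(x)\|_E}$ is awkward when $\Pi(x)=0$, whereas your chain $\|\lambda(T)u\|_F=\|T(p u)\|_E\le\|T\|\,\|p u\|_E\le\|T\|\,\|u\|_F$ avoids that --- and your closing remark on the unital variant $T\mapsto\iota T p+q$ as a group monomorphism is a nice addendum not present in the paper.
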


\begin{proof}
Let $E^{\prime}$ be a subspace of $F$ such that $F=E\oplus E^{\prime}$. We
endow $E^{\prime}$ with a norm $||\;||^{\prime}$ and let $||\;||_{F}$ be the
norm on $F$ defined by $||x||_{F}=||x_{1}||_{E}+||x_{2}||^{\prime}$ if
$x=x_{1}+x_{2}$ with $x_{1}\in E$ and $x_{2}\in E^{\prime}$. Denote by
$\lambda$ the natural inclusion of $E$ in $F$. By construction, $\lambda$ is an
isometry. We define $\Lambda:L(E)\longrightarrow L(F)$ where $\Lambda(T)$ is
the operator on $F$ whose restriction to $E$ is $T$ and whose restriction to
$E^{\prime}$ is the null operator. Clearly $\Lambda$ is injective and the
operator norm of $\Lambda$ is $1$. Indeed if $\Pi$ is the projection of $F$ on
$E$ with kernel $E^{\prime}$, we have
\[
\frac{||\Lambda(T)(x)||_{F}}{||x||_{F}}\leq\frac{||T\circ\Pi(x)||_{F}}%
{||\Pi(x)||_{E}}\leq||T||_{L(E)}.
\]
We deduce $||\Lambda(T)||_{L(F)}\leq||T||_{L(E)}$. On the other hand%
\begin{align*}
||T||_{L(E)}  & =\sup\{\frac{||T(x)||_{E}}{||x||_{E}},x\in E\}\\
& \leq\sup\{\frac{||T(x)||_{E}}{||x||_{F}},x\in F\}=\sup\{\frac{||\Lambda
(T)(x)||_{F}}{||x||_{F}},x\in F\}.
\end{align*}
Finally, it is easy to verify that we have $\Lambda(T\circ T^{\prime}%
)=\Lambda(T)\circ\Lambda(T^{\prime})$, which ends the proof.
\end{proof}

\begin{proof}
of Theorem \ref{T_ConvenientGL}.--
According to Lemma \ref{L_inclusionLEn}, by
induction, we can build a sequence of norms $||\;||_{n}$ on each $E_{n}$ and an
isometry $\Lambda_{n}:L(E_{n})\longrightarrow L\left(  E_{n+1}\right)  $. For
simplicity, we identify $L(E_{n})$ with $\Lambda_{n}(L(E_{n}))$ in $L\left(
E_{n+1}\right)  $. Then $L(E_{n})$ is a Banach subspace of $L(E_{n+1})$ with
the induced topology. It follows that $\mathfrak{G}=\bigcup_{n\in\mathbb{N}%
}L(E_{n})=\underrightarrow{\lim}L(E_{n})$ is a convenient space. On the other
hand, for the operator norm in each $L(E_{n})$ we have
\[
||[T,T^{\prime}]||_{L(E_{n})}\leq2||T||_{L(E_{n})}||T^{\prime}||_{L(E_{n})}%
\]
On each $L(E_{n})$, we consider the norm $\nu_{n}={2}||\;||_{L(E_{n})}$. Then
$\nu_{n}$ defines the topology of $L(E_{n})$. The inclusion $\Lambda_{n}$ is
still an isometry and we have
\[
\nu_{n}([T,T^{\prime}])\leq\nu_{n}(T)\nu_{n}(T^{\prime}).
\]

Given $T\in\mathfrak{G}$, then $T$ belongs to some $L(E_{n})$; we then have
\[
\exp_{\mathfrak{G}}(T)=\sum_{k\in\mathbb{N}}\dfrac{T^{k}}{k!}%
\]
On one hand, classically, the exponential map $\exp_{n}:L(E_{n}%
)\longrightarrow GL(E_{n})$ is an analytic diffeomorphism over the ball
$B_{n}(0,\ln(2))$ (relative to the norm $\nu_{n}$ on $L(E_{n})$) in
$GL(E_{n})$. On the other hand, we have the relations: ${\exp_{n+1}}%
_{|L(E_{n})}=\exp_{n}$ and $B_{n+1}(0,\frac{1}{2}\ln(2))\bigcap E_{n}%
=B_{n}(0,\frac{1}{2}\ln(2))$.\\
It follows that $\exp_{\mathfrak{G}}$ is injective on $\bigcup_{n\in\mathbb{N}}B_{n}(0,\frac{1}{2}\ln(2))$.\\
Therefore all the assumptions of Theorem \ref{T_Liegroup} are satisfied and we get the
announced result.
\end{proof}

\subsection{\label{*DL_BanachVectorBundles}Direct limit of Banach vector bundles}

\begin{definition}
\label{D_AscendingSequenceBanachVectorBundles} A sequence $\left( E_{n},\pi
_{n},M_{n}\right) _{n\in \mathbb{N}^{\ast }}$ of Banach vector bundles is
called a strong ascending sequence of Banach vector bundles if the following
assumptions are satisfied:\newline
1. $\mathcal{M}=(M_{n})_{n\in \mathbb{N}^{\ast }}$ is an ascending sequence
of Banach $C^{\infty }$-manifolds, where $M_{n}$ is modeled on the Banach
space $\mathbb{M}_{n}$ such that $\mathbb{M}_{n}$ is a complemented Banach
subspace of $\mathbb{M}_{n+1}$ and the inclusion $\varepsilon
_{n}^{n+1}:M_{n}\longrightarrow M_{n+1}$ is a $C^{\infty }$ injective map
such that $(M_{n},\varepsilon _{n}^{n+1})$ is a weak submanifold of $M_{n+1}$%
; \newline
2. The sequence $(E_{n})_{n\in \mathbb{N}^{\ast }}$ is an ascending sequence
such that the sequence of typical fibers $\left( \mathbb{E}_{n}\right)
_{n\in \mathbb{N}^{\ast }}$ of $(E_{n})_{n\in \mathbb{N}^{\ast }}$ is an
ascending sequence of Banach spaces such that $\mathbb{E}_{n}$ is a
complemented Banach subspace of $\mathbb{E}_{n+1}$;\newline
3. For each $n\in \mathbb{N}^{\ast }$, $\pi _{n+1}\circ \lambda _{n}^{n+1}=\varepsilon _{n}^{n+1}\circ \pi _{n}$ where $\lambda _{n}^{n+1}:E_{n}\longrightarrow E_{n+1}$ is the natural
inclusion;\newline
4. Any $x\in M=\underrightarrow{\lim }M_{n}$ has the direct limit chart
property for $(U=\underrightarrow{\lim }U_{n},\phi =\underrightarrow{\lim }%
\phi _{n})$;\newline
5. For each $n\in \mathbb{N}^{\ast }$, there exists a trivialization $\Psi
_{n}:\left( \pi _{n}\right) ^{-1}\left( U_{n}\right) \longrightarrow
U_{n}\times \mathbb{E}_{n}$ such that the following diagram is commutative:

\begin{equation*}
\begin{array}{ccc}
\left( \pi _{n}\right) ^{-1}\left( U_{n}\right) & \underrightarrow{\lambda
_{n}^{n+1}} & \left( \pi _{n+1}\right) ^{-1}\left( U_{n+1}\right) \\ 
\Psi _{n}\downarrow &  & \downarrow \Psi _{n+1} \\ 
U_{n}\times \mathbb{E}_{n} & \underrightarrow{\left( \varepsilon
_{n}^{n+1}\times \iota _{n}^{n+1}\right) } & U_{n+1}\times \mathbb{E}_{n+1}.%
\end{array}%
\end{equation*}

\end{definition}

For example, the sequence $\left( TM_{n},\pi _{n},M_{n}\right) _{n\in 
\mathbb{N}^{\ast }}$ is a strong ascending sequence of Banach vector bundles
whenever $(M_{n})_{n\in \mathbb{N}^{\ast }}$ is an ascending sequence which
has the direct limit chart property at each point of $x\in M=%
\underrightarrow{\lim }M_{n}$ whose model $\mathbb{M}_{n}$ is complemented
in $\mathbb{M}_{n+1}$.

\begin{proposition}
\label{P_StructureOnDirectLimitLinearBundles} Let $\left( E_{n},\pi
_{n},M_{n}\right) _{n\in \mathbb{N}^{\ast }}$ be a strong ascending sequence
of Banach vector bundles. We have:

1. $\underrightarrow{\lim}E_{n}$ has a structure of n.n.H convenient
manifold modeled on the LB-space $\underrightarrow{\lim}\mathbb{M}%
_{n}\times \underrightarrow{\lim}\mathbb{E}_n$ which has a Hausdorff
convenient structure if and only if $M$ is Hausdorff.

2. $\left( \underrightarrow{\lim }E_{n},\underrightarrow{\lim }\pi _{n},%
\underrightarrow{\lim }M_{n}\right) $ can be endowed with a structure of convenient vector bundle whose typical fiber is $\underrightarrow{\lim }\mathbb{\mathbb{%
E}}_{n}$ and whose structural group is a Fr\'{e}chet topological group.
\end{proposition}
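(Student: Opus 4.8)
The plan is to reduce everything to the manifold-level machinery already developed (Lemma \ref{L_asEn}, Lemma \ref{L_Cinftycinfty}, Theorem \ref{T_LBCmanifold}, Proposition \ref{P_DirectLimitMap}) applied to the total spaces $E_n$, and then to upgrade the resulting n.n.H. convenient manifold structure on $\underrightarrow{\lim}E_n$ to a convenient vector bundle structure by exhibiting local trivializations and conveniently smooth transition cocycles with values in a convenient Lie group. First I would observe that, since each $E_n$ is a Banach vector bundle over the Banach manifold $M_n$ modeled on $\mathbb{M}_n$ with typical fiber $\mathbb{E}_n$, the total space $E_n$ is itself a Banach $C^\infty$-manifold modeled on $\mathbb{M}_n\times\mathbb{E}_n$; by assumptions 1 and 2 of Definition \ref{D_AscendingSequenceBanachVectorBundles} the spaces $\mathbb{M}_n\times\mathbb{E}_n$ form an ascending sequence of Banach spaces with $\mathbb{M}_n\times\mathbb{E}_n$ complemented in $\mathbb{M}_{n+1}\times\mathbb{E}_{n+1}$, so by Proposition \ref{P_PreservationCompletness} the direct limit $\underrightarrow{\lim}(\mathbb{M}_n\times\mathbb{E}_n)$, which we identify with $\underrightarrow{\lim}\mathbb{M}_n\times\underrightarrow{\lim}\mathbb{E}_n$ as locally convex spaces (by \cite{HSTH}, Theorem 4.3, as recalled after Definition \ref{D_LBCspace}), is an (LBC)-space. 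Assumption 3 guarantees the bonding maps $\lambda_n^{n+1}$ are injective $C^\infty$ maps making $(E_n,\lambda_n^{n+1})$ a weak submanifold of $E_{n+1}$ (the required injective continuous linear map between models being $\iota_n^{n+1}\times(\text{inclusion }\mathbb{E}_n\hookrightarrow\mathbb{E}_{n+1})$).

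Next I would establish that the ascending sequence $(E_n)_{n\in\mathbb{N}^*}$ has the direct limit chart property at each point of $\underrightarrow{\lim}E_n$. Fix $v\in\underrightarrow{\lim}E_n$ and set $x=(\underrightarrow{\lim}\pi_n)(v)$. By assumptions 4 and 5 of Definition \ref{D_AscendingSequenceBanachVectorBundles}, choose a direct limit chart $(U=\underrightarrow{\lim}U_n,\phi=\underrightarrow{\lim}\phi_n)$ around $x$ over which each ${E_n}_{|U_n}$ is trivial, say via $\Psi_n:{E_n}_{|U_n}\xrightarrow{\sim}U_n\times\mathbb{E}_n$; by shrinking (and passing to $n\geq n_0$ as allowed by Remark \ref{ngeqn0}) one arranges that the local trivializations are compatible, i.e. $\Psi_{n+1}\circ\lambda_n^{n+1}=(\varepsilon_n^{n+1}\times\iota_n^{n+1,\mathbb{E}})\circ\Psi_n$ on ${E_n}_{|U_n}$; this is where the complementedness in assumption 2 is used, exactly as complementedness of $\mathbb{M}_n$ in $\mathbb{M}_{n+1}$ is used in Proposition \ref{P_StrongAscending} via Lemma \ref{L_extchartsubcomp} — one extends a bundle chart on $E_n$ to a bundle chart on $E_{n+1}$ splitting off the complement $\mathbb{E}_n^\perp$ of $\mathbb{E}_n$ in $\mathbb{E}_{n+1}$. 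The composite charts $(\phi_n\times\mathrm{Id})\circ\Psi_n:{E_n}_{|U_n}\to V_n\times\mathbb{E}_n$ then form an ascending sequence of chart domains with compatible chart maps, so $(E_n)_{n\in\mathbb{N}^*}$ satisfies the hypotheses of Lemma \ref{L_asEn}(ii), i.e. it has the direct limit chart property at $v$. Theorem \ref{T_LBCmanifold} now endows $\underrightarrow{\lim}E_n$ with a unique n.n.H. convenient manifold structure modeled on $\underrightarrow{\lim}\mathbb{M}_n\times\underrightarrow{\lim}\mathbb{E}_n$ whose topology is the $DL$-topology; since a convenient (hence Hausdorff) model space is involved, this structure is Hausdorff precisely when the $DL$-topology on $\underrightarrow{\lim}E_n$ is, and because $\underrightarrow{\lim}\pi_n$ is continuous with each fiber $\mathbb{E}_n$ Hausdorff, this happens if and only if $M=\underrightarrow{\lim}M_n$ is Hausdorff — giving part 1.

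For part 2, the convenient bundle projection is $\pi=\underrightarrow{\lim}\pi_n:\underrightarrow{\lim}E_n\to\underrightarrow{\lim}M_n$, which is conveniently smooth by Proposition \ref{P_DirectLimitMap}(i) since each $\pi_n$ is $C^\infty$. Over a direct limit chart domain $U=\underrightarrow{\lim}U_n$ as above, the maps $\Psi_n$ assemble by Proposition \ref{P_DirectLimitMap}(i) into a conveniently smooth bijection $\Psi=\underrightarrow{\lim}\Psi_n:\pi^{-1}(U)\to U\times\underrightarrow{\lim}\mathbb{E}_n$ which is fiberwise linear and covers $\mathrm{Id}_U$; its inverse is likewise a direct limit of $C^\infty$ maps, hence conveniently smooth, so $\Psi$ is a convenient bundle chart. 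Given two overlapping such trivializations, the transition function $z\mapsto \Psi^\beta_z\circ(\Psi^\alpha_z)^{-1}$ is the direct limit of the Banach transition cocycles $z\mapsto g^{\beta\alpha}_n(z)\in GL(\mathbb{E}_n)$, and since $\mathbb{E}_n$ is complemented in $\mathbb{E}_{n+1}$, Theorem \ref{T_ConvenientGL} says $GL(\underrightarrow{\lim}\mathbb{E}_n)=\bigcup_n GL(\mathbb{E}_n)$ is a convenient Lie group modeled on the (LBC)-space $L(\underrightarrow{\lim}\mathbb{E}_n)$; one checks the transition map lands in this group and is conveniently smooth into it, again via Lemma \ref{L_Cinftycinfty}/Proposition \ref{P_DirectLimitMap} applied to the family $g^{\beta\alpha}_n$. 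This exhibits $(\underrightarrow{\lim}E_n,\underrightarrow{\lim}\pi_n,\underrightarrow{\lim}M_n)$ as a convenient vector bundle with typical fiber $\underrightarrow{\lim}\mathbb{E}_n$ and structural group a convenient Lie group.

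The main obstacle is the compatibility of the local trivializations across the levels $n$ — i.e. arranging $\Psi_{n+1}|_{E_n}$ to agree with $\Psi_n$ up to the inclusion of models — which does not come for free from assumption 5 alone but requires the complemented-subbundle hypothesis in assumption 2 and an extension-of-bundle-charts argument parallel to Lemma \ref{L_extchartsubcomp}; once this is in place, everything else is a bookkeeping application of the direct-limit smoothness lemmas already proved.
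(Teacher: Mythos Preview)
Your treatment of part 1 follows the paper's line exactly: verify the direct limit chart property for the sequence of total spaces $(E_n)$ by stacking compatible bundle trivializations on top of a direct limit chart of $M$, then invoke Theorem \ref{T_LBCmanifold}. You are in fact more careful than the paper on the one delicate point: the paper simply asserts the diagram $\Psi_{j}\circ\lambda_i^{j}=(\varepsilon_i^{j}\times\iota_i^{j})\circ\Psi_i$ commutes as a consequence of $\pi_j\circ\lambda_i^j=\varepsilon_i^j\circ\pi_i$, whereas you correctly identify that compatibility of the $\Psi_n$ across levels must be \emph{arranged} using complementedness of $\mathbb{E}_n$ in $\mathbb{E}_{n+1}$, by a bundle-chart analogue of Lemma \ref{L_extchartsubcomp}.

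For part 2 the two proofs diverge on the structural group, and here your route has a gap. The paper does not use Theorem \ref{T_ConvenientGL}; instead it fixes splittings $\mathbb{E}_{i+1}\cong\mathbb{E}_i\times\mathbb{E}'_{i+1}$ and constructs an ad hoc convenient Lie group $G(\mathbb{E})$ as a direct limit of the products $G_i=GL(\mathbb{E}'_1)\times\cdots\times GL(\mathbb{E}'_i)$ under $A\mapsto A\times\mathrm{Id}$, then declares the transition functions to lie in $G(\mathbb{E})$. Your appeal to Theorem \ref{T_ConvenientGL} is cleaner, but look at how that group is built: the Lie-algebra embedding of Lemma \ref{L_inclusionLEn} is $T\mapsto T\oplus 0$, hence the corresponding group embedding $GL(\mathbb{E}_n)\hookrightarrow GL(\mathbb{E}_{n+1})$ is $g\mapsto g\oplus\mathrm{Id}_{\mathbb{E}'_{n+1}}$. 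Thus an element of $\bigcup_n GL(\mathbb{E}_n)$ is, as an automorphism of $\mathbb{E}$, eventually the identity on the successive complements. The compatibility $\Theta^{\alpha\beta}_{j}\circ\iota_i^j=\iota_i^j\circ\Theta^{\alpha\beta}_i$ that you get from compatible trivializations only says $\Theta^{\alpha\beta}_{j}$ is block upper-triangular with top-left block $\Theta^{\alpha\beta}_i$; it does \emph{not} give $\Theta^{\alpha\beta}_{j}=\Theta^{\alpha\beta}_i\oplus\mathrm{Id}$, because the complementary subbundles chosen when extending $\Psi^\alpha_n$ and $\Psi^\beta_n$ need not coincide. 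So the direct-limit transition $\Theta^{\alpha\beta}(y)$ need not lie in the group furnished by Theorem \ref{T_ConvenientGL}. To salvage your approach you must either (a) sharpen the extension-of-trivializations step so that a \emph{single} choice of complementary subbundle is used for all trivializations over a given $U$, forcing the transitions to be block-diagonal, or (b) abandon Theorem \ref{T_ConvenientGL} and, like the paper, build the structural group directly from the splitting data.
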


Proof.-- 1. Consider $(x,v)$ in some $E_{n}$; in particular $x$ belongs to $%
M_{n}$. According to the assumptions 3. and 4, there exists a chart $\left(
U,\phi \right) $ of $M=\varinjlim M_{i}$ around $x$ where $\phi
:U=\bigcup\limits_{i\geq n}U_{i}\longrightarrow V=\bigcup\limits_{i\geq
n}\phi _{i}\left( U_{i}\right) $, $\left( U_{i},\phi _{i}\right) $ being a
chart around $x_{i}$ and $V_{i}=\phi _{i}\left( U_{i}\right) \subset \mathbb{%
M}_{i}$. A local trivialization $\Psi _{i}:\pi _{i}^{-1}\left( U_{i}\right)
\longrightarrow U_{i}\times \mathbb{E}_{i}$ gives rise, via the chart $\phi
_{i}:U_{i}\longrightarrow V_{i}$, to a chart $\psi _{i}:\pi _{i}^{-1}\left(
U_{i}\right) \longrightarrow V_{i}\times \mathbb{E}_{i}\subset \mathbb{M}%
_{i}\times \mathbb{E}_{i}$. \newline

From the assumption 5., we get the commutativity of the diagram 
\begin{equation*}
\begin{array}{ccc}
\left( \pi _{i}\right) ^{-1}\left( U_{i}\right) & \underrightarrow{\lambda
_{i}^{j}} & \left( \pi _{j}\right) ^{-1}\left( U_{j}\right) \\ 
\Psi _{i}\downarrow &  & \downarrow \Psi _{j} \\ 
U_{i}\times \mathbb{E}_{i} & \underrightarrow{\left( \varepsilon
_{i}^{j}\times \iota _{i}^{j}\right) } & U_{j}\times \mathbb{E}_{j}%
\end{array}%
\end{equation*}%
The previous arguments imply that the sequence of Banach manifolds $%
\{E_{n}\}_{n\in \mathbb{N}^{\ast }}$ has the direct limit chart property at
any point $(x,v)\in \underrightarrow{\lim }E_{n}$. Therefore, from Theorem %
\ref{T_LBCmanifold}, there exists a unique structure of n.n.H. convenient
manifold on $E=\underrightarrow{\lim }E_{n}$ whose topology coincides with
the $DL$-topology on $E$. In particular this structure is Hausdorff if and
only if $M$ is so. This ends the proof of part 1.\newline

2. The main difficulty is to define the structural group\footnote{As the referee pointed out, the structural group is much larger than the direct limit of the linear groups $GL\left( \mathbb{E}_{n}\right)$.}, say $G\left( \mathbb{E}\right) $ where $\mathbb{E}=\underrightarrow{\lim }\mathbb{E}_{n}$.\newline
Let $\mathbb{E}_{1}\subset \mathbb{E}_{2}\subset \cdots $ be the direct
sequence of complemented Banach spaces associated to the direct sequence $%
E_{1}\subset E_{2}\subset \cdots $; so there exist Banach subspaces $\mathbb{%
E}_{1}^{\prime },\mathbb{E}_{2}^{\prime },\dots $ such that: 
\begin{equation*}
\left\{ 
\begin{array}{c}
\mathbb{E}_{1}=\mathbb{E}_{1}^{\prime }, \\ 
\forall i\in \mathbb{N}^{\ast },\mathbb{E}_{i+1}\backsimeq \mathbb{E}%
_{i}\times \mathbb{E}_{i+1}^{\prime }%
\end{array}%
\right. \;
\end{equation*}%
For $i,j\in \mathbb{N}^{\ast },\;i\leq j$, we have the injection 
\begin{equation*}
\begin{array}{cccc}
\iota _{i}^{j}: & \mathbb{E}_{i}\backsimeq \mathbb{E}_{1}^{\prime }\times
\cdots \times \mathbb{E}_{i}^{\prime } & \rightarrow & \mathbb{E}%
_{j}\backsimeq \mathbb{E}_{1}^{\prime }\times \cdots \times \mathbb{E}%
_{j}^{\prime } \\ 
& (x_{1}^{\prime },\dots ,x_{i}^{\prime }) & \mapsto & (x_{1}^{\prime
},\dots ,x_{i}^{\prime },0,\dots ,0)%
\end{array}%
\end{equation*}%
\smallskip
Any $A_{n+1}\in GL\left( \mathbb{E}_{n+1}\right) $ is represented by $\left( 
\begin{array}{cc}
A_{n} & B_{n+1} \\ 
A_{n}^{\prime } & B_{n+1}^{\prime }%
\end{array}%
\right) $ where%
\begin{equation*}
A_{n}\in \mathcal{L}\left( \mathbb{E}_{n},\mathbb{E}_{n}\right) ,\
A_{n}^{\prime }\in \mathcal{L}\left( \mathbb{E}_{n},\mathbb{E}_{n+1}^{\prime
}\right) ,\ B_{n+1}\in \mathcal{L}\left( \mathbb{E}_{n+1}^{\prime },\mathbb{E%
}_{n}\right) \text{ and }B_{n+1}^{\prime }\in \mathcal{L}\left( \mathbb{E}%
_{n+1}^{\prime },\mathbb{E}_{n+1}^{\prime }\right) \text{.}
\end{equation*}

The group 
\begin{equation*}
GL_{0}\left( \mathbb{E}_{n+1}|\mathbb{E}_{n}\right) =\left\{ A\in GL\left( 
\mathbb{E}_{n+1}\right) :A\left( \mathbb{E}_{n}\right) =\mathbb{E}%
_{n}\right\}
\end{equation*}

can be identified with the Banach-Lie sub-group of operators of type $\left( 
\begin{array}{cc}
A_{n} & B_{n+1} \\ 
0 & B_{n+1}^{\prime }%
\end{array}%
\right) $ (cf. \cite{ChSt}).

The set 
\begin{equation*}
G_{n}=\left\{ A_{n}\in GL(\mathbb{E}_{n}):\forall k\in \left\{ 1,\dots
,n-1\right\} ,A_{n}(\mathbb{E}_{k})=\mathbb{E}_{k}\right\}
\end{equation*}%
can be endowed with a structure of Banach-Lie subgroup.\newline
An element $A_{n}$ of $G_{n}$ can be seen as 
\begin{equation*}
\centering A_{n}=\left( \;%
\begin{tabular}{clcclll|l}
\cline{3-4}\cline{6-6}\cline{8-8}
$A_{1}$ & \multicolumn{1}{c|}{$B_{2}$} & \multicolumn{1}{c|}{%
\multirow{2}{*}{$B_{3}$}} & \multicolumn{1}{c|}{\multirow{3}{*}{$B_{4}$}} & 
\multicolumn{1}{l|}{} & \multicolumn{1}{l|}{\multirow{5}{*}{$B_{i}$}} &  & 
\multicolumn{1}{l|}{\multirow{7}{*}{$B_{n}$}} \\ 
0 & \multicolumn{1}{c|}{$B^\prime_{2}$} & \multicolumn{1}{c|}{} & 
\multicolumn{1}{c|}{} & \multicolumn{1}{l|}{} & \multicolumn{1}{l|}{} &  & 
\multicolumn{1}{l|}{} \\ \cline{1-3}
\multicolumn{2}{|c|}{0} & \multicolumn{1}{c|}{$B^\prime_{3}$} & 
\multicolumn{1}{c|}{} & \multicolumn{1}{l|}{} & \multicolumn{1}{l|}{} &  & 
\multicolumn{1}{l|}{} \\ \cline{1-4}
\multicolumn{3}{|c|}{0} & $B^\prime_{4}$ & \multicolumn{1}{l|}{} & 
\multicolumn{1}{l|}{} &  & \multicolumn{1}{l|}{} \\ \cline{1-3}
\multicolumn{1}{l}{} &  & \multicolumn{1}{l}{} & \multicolumn{1}{l}{} & 
\multicolumn{1}{l|}{$\ddots$} & \multicolumn{1}{l|}{} &  & 
\multicolumn{1}{l|}{} \\ \cline{1-6}
\multicolumn{5}{|c|}{0} & $B^\prime_{i}$ &  & \multicolumn{1}{l|}{} \\ 
\cline{1-5}
\multicolumn{1}{l}{} &  & \multicolumn{1}{l}{} & \multicolumn{1}{l}{} &  & 
& $\ddots$ & \multicolumn{1}{l|}{} \\ \hline
\multicolumn{7}{|c|}{0} & $B^\prime_{n}$ \\ \cline{1-7}
\end{tabular}
\ \;\right)
\end{equation*}

\bigskip 
For $1\leq i\leq j\leq k$, we consider the following diagram

\begin{equation*}
\begin{array}{ccc}
\mathbb{E}_{k} & \underrightarrow{A_{k}} & \mathbb{E}_{k} \\ 
\iota _{j}^{k}\uparrow &  & \downarrow P_{j}^{k} \\ 
\mathbb{E}_{j} & \underrightarrow{A_{j}} & \mathbb{E}_{j} \\ 
\iota _{i}^{j}\uparrow &  & \downarrow P_{i}^{j} \\ 
\mathbb{E}_{i} & \underrightarrow{A_{i}} & \mathbb{E}_{i}%
\end{array}%
\end{equation*}

where $P_{i}^{j}:\mathbb{E}_{j}\longrightarrow\mathbb{E}_{i}$ is the
projection along the direction $\mathbb{E}_{i+1}^{\prime}\oplus\cdots \oplus%
\mathbb{E}_{j}^{\prime}$.

The map%
\begin{equation*}
\begin{array}{cccc}
\theta _{i}^{j}: & G_{j} & \longrightarrow & G_{i} \\ 
& A_{j} & \mapsto & P_{i}^{j}\circ A_{j}\circ \iota _{i}^{j}%
\end{array}%
\end{equation*}
is perfectly defined and we have:

\begin{equation*}
\begin{array}{l}
\left( \theta _{i}^{j}\circ \theta _{j}^{k}\right) \left( A_{k}\right)
=\theta _{i}^{j}\left[ \theta _{j}^{k}\left( A_{k}\right) \right] =\theta
_{i}^{j}\left( P_{j}^{k}\circ A_{j}\circ \iota _{j}^{k}\right)
=P_{i}^{j}\circ P_{j}^{k}\circ A_{j}\circ \iota _{j}^{k}\circ \iota _{i}^{j}%
\end{array}%
\end{equation*}

Because $P_{i}^{j}\circ P_{j}^{k}=P_{i}^{k}$ (projective system) and $\iota
_{j}^{k}\circ \iota _{i}^{j}=\iota _{i}^{k}$ (inductive system), we have%
\begin{equation*}
\left( \theta _{i}^{j}\circ \theta _{j}^{k}\right) \left( A_{k}\right)
=P_{i}^{k}\circ A_{j}\circ \iota _{i}^{k}=\theta _{i}^{k}\left( A_{k}\right)
\end{equation*}

So $\left( G_{i},\theta _{i}^{j}\right) _{i\leq j}$ is a projective system
of Banach-Lie groups and the projective limit $G\left( \mathbb{E}\right) =%
\underleftarrow{\lim }G_{n}$ can be endowed with a structure of Fr\'{e}%
chet topological group.

\medskip 
From assumptions 3. and 4. it follows that we have a well defined
conveniently smooth projection $\pi =\underrightarrow{\lim }\pi _{i}:%
\underrightarrow{\lim }E_{i}\longrightarrow \underrightarrow{\lim }M_{i}$
given by $\pi (x,v)=x$ and, with the previous notations, we also have $%
\underrightarrow{\lim }\left( \pi _{i}\right) ^{-1}\left( U_{i}\right) =\pi
^{-1}\left( \underrightarrow{\lim }U_{i}\right) $. \newline

The map $\Psi_{i}:\pi_{i}^{-1}\left( U_{i}\right) \longrightarrow U_{i}\times%
\mathbb{E}_{i}$ can be written $\Psi_i(y_i,u_i)=(y_i, \tilde{\Psi}%
_i(y_i)(u_i))$ where $u_i\mapsto \tilde{\Psi}_i(y_i)(u_i)$ is an isomorphism
of Banach spaces from $\pi_i^{-1}(y_i)$ to $\mathbb{E}_i$.

Consider an atlas $\mathcal{A} =\{(U^\alpha=\underrightarrow{\lim}%
U^\alpha_{i},\phi^\alpha=\underrightarrow{\lim}\phi^\alpha_{i})\}_{\alpha\in
A}$ on $M$. From the proof of the first part, the set $\hat{\mathcal{A}}%
=\{(\pi^{-1}(U^\alpha)=\underrightarrow{\lim}\pi_i^{-1}(U^\alpha_{i}),\psi^%
\alpha=\underrightarrow{\lim}\psi^\alpha_{i})\}_{\alpha\in A}$ is an atlas
for the manifold $E$.

Now, if $U_{i}^{\alpha }\cap U_{i}^{\beta }\not=\emptyset $, 
\begin{equation*}
\Psi _{i}^{\alpha }\circ (\Psi _{i}^{\beta })^{-1}:\phi _{i}^{\beta
}(U_{i}^{\alpha }\cap U_{i}^{\beta })\times \mathbb{E}_{1}^{\prime }\times
\cdots \times \mathbb{E}_{i}^{\prime }\rightarrow \phi _{i}^{\alpha
}(U_{i}^{\alpha }\cap U_{i}^{\beta })\times \mathbb{E}_{1}^{\prime }\times
\cdots \times \mathbb{E}_{i}^{\prime }
\end{equation*}%
can be written 
\begin{equation*}
(\overline{y}_{i}^{\beta },\overline{u^{\prime }}_{1}^{\beta },\dots ,%
\overline{u^{\prime }}_{i}^{\beta })\mapsto (\phi _{i}^{\alpha }\circ (\phi
_{i}^{\beta })^{-1}(\overline{y}_{i}^{\beta }),[\tilde{\Psi}_{i}^{\alpha
}(y_{i})]\circ \lbrack \tilde{\Psi}_{i}^{\beta }(y_{i})]^{-1}(\overline{%
u^{\prime }}_{1}^{\beta },\dots ,\overline{u^{\prime }}_{i}^{\beta }))
\end{equation*}%
where $\overline{y}_{i}^{\beta }=\phi _{i}^{\beta }(y_{i})$. With these
notations, $\overline{y}_{i}^{\beta }\mapsto \Theta _{i}^{\alpha \beta }(%
\overline{y}_{i}^{\beta })=[\tilde{\Psi}_{i}^{\alpha }(y_{i})]\circ \lbrack 
\tilde{\Psi}_{i}^{\beta }(y_{i})]^{-1}$ is a conveniently smooth map.\newline

From assumption 3 and assumption 5 written over the open sets $U_{i}^{\alpha
}$ and $U_{j}^{\alpha }$ (resp. $U_{i}^{\beta }$ and $U_{j}^{\beta }$), we
have%
\begin{equation*}
\left( \tilde{\Psi}_{j}^{\alpha }\left( y_{j}\right) \right) ^{-1}\circ
\iota _{i}^{j}=\lambda _{i}^{j}\circ \left( \tilde{\Psi}_{i}^{\alpha }\left(
y_{i}\right) \right) ^{-1}
\end{equation*}

Finally we get%
\begin{equation*}
\Theta _{j}^{\alpha \beta }(\bar{y_{j}})\circ \iota _{i}^{j}=\iota
_{i}^{j}\circ \Theta _{i}^{\alpha \beta }(\bar{y_{i}})
\end{equation*}
So if $\bar{y}=\underrightarrow{\lim}{\bar{y}_i}$, from the above relation, one 
can define the transition function $\Theta ^{\alpha \beta }(\bar{y})$
as an element of the Fr\'{e}chet topological group $G(\mathbb{E})$. This ends the
proof of part 2.

\section{\label{*LinearConnectionsOnDLAnchoredBanachBundles}Linear connections on direct limit of anchored Banach bundles}

\subsection{Bundle structures on the tangent bundle to a vector bundle}

Let $M$ be a smooth Banach manifold modeled on a Banach space $\mathbb{M}$ and
let $\pi:E\rightarrow M$ be a smooth Banach vector bundle on $M$ whose typical
fiber is a Banach space $\mathbb{E}$. Let $p_{E}:TE\rightarrow E$ and
$p_{M}:TM\rightarrow M$ be the canonical projections of each tangent bundle.

There exists an atlas $\left\{  U^{\alpha},\phi^{\alpha}\right\}  _{\alpha\in
A}$ of $M$ for which $E_{|U^{\alpha}}$ is trivial; therefore we obtain a chart
$({U}_{E}^{\alpha},{\phi}_{E}^{\alpha})$ on $E$, where ${U}_{E}^{\alpha}%
=\pi^{-1}(U^{\alpha})$ and s.t. ${\phi}_{E}^{\alpha}$ is a diffeomorphism from
${U}_{E}^{\alpha}$ on $\phi^{\alpha}(U^{\alpha})\times\mathbb{E}$. We also
have a chart $({U}_{TM}^{\alpha},{\phi}_{TM}^{\alpha})$ on $TM$ where
${U}_{TM}^{\alpha}=p_{M}^{-1}(U^{\alpha})$ and ${\phi}_{TM}^{\alpha}%
=(\phi^{\alpha},Tp_{M})$.

Hence the family $\left\{  T\left(  E_{|U^{\alpha}}\right)  ,T{\phi}%
_{E}^{\alpha}\right\}  _{\alpha\in A}$ where

$T{\phi}_{E}^{\alpha}:$ $T\left(  E_{|U^{\alpha}}\right)  \longrightarrow
T\left(  \phi^{\alpha}(U^{\alpha})\times\mathbb{E}\right)  =\phi^{\alpha
}(U^{\alpha})\times\mathbb{E\times M\times E}$

is the atlas describing the canonical vector bundle structure of $\left(
TE,p_{E},E\right)  $.

Let $\left(  x,u\right)  $ be an element of $E_{x}=\pi^{-1}\left(  x\right)  $
where $x\in U^{\alpha\beta}=U^{\alpha}\cap U^{\beta}\neq \emptyset$ and let $\left(
x,u,y,v\right)  $ be an element of $T_{\left(  x,u\right)  }E$. For $\left(
x^{\alpha},u^{\alpha},y^{\alpha},v^{\alpha}\right)  =T{\phi}_{E}^{\alpha
}\left(  x,u,y,v\right)  $, we have the transition functions:%
\begin{align*}
&  \left(  T\left(  \left(  \phi^{\alpha}\times\operatorname{Id}_{\mathbb{E}%
}\right)  \circ{\phi}_{E}^{\alpha}\right)  \circ\left(  T\left(  \left(
\phi^{\beta}\times\operatorname{Id}_{\mathbb{E}}\right)  \circ{\phi}%
_{E}^{\beta}\right)  \right)  ^{-1}\right)  \left(  \left(  x^{\beta}%
,u^{\beta},y^{\beta},v^{\beta}\right)  \right) \\
&  =\left(  \phi^{\alpha\beta}\left(  x^{\beta}\right)  ,{\phi}_{E}%
^{\alpha\beta}\left(  \left(  \phi^{\beta}\right)  ^{-1}\left(  y^{\beta
}\right)  \right)  u^{\beta},d\phi^{\alpha\beta}\left(  x^{\beta}\right)
y^{\beta},\left(  d\left(  {\phi}_{E}^{\alpha\beta}\circ\left(  \phi^{\beta
}\right)  ^{-1}\right)  \left(  x^{\beta}\right)  y^{\beta}\right)  u^{\beta
}\right) \\
&  +{\phi}_{E}^{\alpha\beta}\left(  \left(  \phi^{\beta}\right)  ^{-1}\left(
x^{\beta}\right)  v^{\beta}\right)
\end{align*}
\newline
where $\phi^{\alpha\beta}=\phi^{\alpha}\circ\left(  \phi^{\beta
}\right)  ^{-1}$ and $\left(  x^{\beta},\phi_{E}^{\alpha\beta}\left(
x^{\beta}\right)  u^{\beta}\right)  =\left(  {\phi}_{E}^{\alpha}\circ\left(
{\phi}_{E}^{\beta}\right)  ^{-1}\right)  \left(  x^{\beta},u^{\beta}\right)  $
for $x^{\beta}\in\phi^{\alpha}\left(  U^{\alpha\beta}\right)$.

\bigskip So, for fixed $\left(  x^{\beta},u^{\beta}\right)$, the transition
functions are linear in $\left(  y^{\beta},v^{\beta}\right)  \in
\mathbb{M\times E}$. This describes the vector bundle structure of the tangent
bundle $\left(  TE,p_{E},E\right)  $.

On the other hand, for fixed $\left(  x^{\beta},y^{\beta}\right)  $ the
transition functions of $TE$ are also linear in $\left(  u^{\beta},v^{\beta
}\right)  \in\mathbb{E}\times\mathbb{E}$ and we get a vector bundle structure
on $\left(  TE,T\pi,TM\right)  $ which appears as the derivative of the
original one on $E$.

\subsection{Connections on a Banach bundle}

\label{banachconnection}

The kernel of $T\pi:TE\longrightarrow TM$ is denoted by $VE$ and is called the
\textit{vertical bundle} over $E$. It appears as a vector bundle over $M$. It
is well known that $VE$ can also be seen as the pull-back of the bundle
$\pi:E\longrightarrow M$ over $\pi$ as described by the following diagram:%
\[%
\begin{array}
[c]{ccc}%
E\times_{M}E\simeq\pi^{\ast}E & \overset{\widehat{\pi}}{\longrightarrow} & E\\
\downarrow &  & \downarrow\pi\\
E & \overset{\pi}{\longrightarrow} & M
\end{array}
\]

We have a canonical isomorphism $E\times_{M}E\rightarrow VE$ called the
\textit{vertical lift} $vl_{E}$ defined by%
\[
vl_{E}\left(  x,u,v\right)  =\overset{.}{\gamma}\left(  0\right)
\]

where $\gamma(t)=u+tv$. This map is fiber linear over $M$.

Let $J:VE\rightarrow TE$ be the canonical inclusion.

According to \cite{Vil} we have:

\begin{definition}
\label{D_Connection} A (non linear) connection on $E$ is a bundle morphism
$V:TE\rightarrow VE$ such that $V\circ J=\operatorname{Id}_{VE}$.
\end{definition}

The datum of a connection $V$ on $E$ is equivalent to the existence of a
decomposition $TE=HE\oplus VE$ of the Banach bundle $E$ with $HE=\ker V$.

We then have the following diagram:%
\[%
\begin{array}
[c]{ccc}%
VE & \overset{V}{\longleftarrow} & TE\\
vl_{E}\uparrow &  & \downarrow D\\
\pi^{\ast}E & \overset{\widehat{\pi}}{\longrightarrow} & E
\end{array}
\]

The bundle morphism $D=\hat{\pi}\circ vl_{E}^{-1}\circ V:TE\rightarrow E$ is
called the \textit{connection map} or \textit{connector} which is a smooth
morphism of fibrations. Note that, in each fiber $T_{(x,u)}E$, the kernel of
$D$ is exactly the subspace $H_{\left(  x,u\right)  }E$ of $HE$ in
$T_{(x,u)}E$. Therefore, the datum of $D$ is equivalent to the datum of $V$.

We then have, modulo the identification $VE\simeq\pi^{\ast}E$ via $vl_{E}$:
\[%
\begin{array}
[c]{cccc}%
D: & TE & \rightarrow & E\\
& \left(  x,u,y,v\right)  & \mapsto & \left(  x,v+\omega\left(  x,u\right)
y\right)
\end{array}
\]

where $\omega\left(  x,u\right)  \in L\left(  T_{x}M,T_{\left(  x,u\right)
}E\right)  $.

If moreover, $D$ is linear on each fiber, then the connection is called a {
}\textit{linear connection}.\newline

\bigskip Modulo the identification of $U\subset M$ and $\phi(U)\subset
\mathbb{M}$ we have the following identifications:

-- $E_{|U}\equiv U\times\mathbb{E}$

-- $TM_{|U}\equiv U\times\mathbb{M}$

-- $TE_{|\pi^{-1}(U)}\equiv(U\times\mathbb{E})\times(\mathbb{M}\times\mathbb{E})$

-- $VE_{|\pi^{-1}(U)}\equiv(U\times\mathbb{E})\times\mathbb{E}$

According to these identifications, we obtain the following characterizations
of $V$ and $D$:
\begin{align*}
V(x,u,y,v)  &  =(x,u,0,v+\omega(x,u)y)\text{ }\\
D(x,u,y,v)  &  =(x,v+\omega(x,u)y)
\end{align*}
where $\omega$ is a smooth map from $U\times\mathbb{E}$ to the space
$L(\mathbb{M},\mathbb{E})$ of bounded linear operators from $\mathbb{M}$ to
$\mathbb{E}$.

\noindent This connection is \textit{linear} if and only if $\omega$ is linear
in the second variable. In this case, the relation $\Gamma
(x)(u,y)=\omega(x,u)y$ gives rise to a smooth map $\Gamma$ from $U$ to the
space of bilinear maps $L^{2}(\mathbb{E},\mathbb{M};\mathbb{E})$ called
\textit{local Christoffel components} of the connection.

Conversely, a connection can be given by a collection $\left(  U^{\alpha
},\omega^{\alpha}\right)  $ of local maps $\omega^{\alpha}:U^{\alpha}%
\times\mathbb{E}\rightarrow L(\mathbb{M},\mathbb{E})$ on a covering
$(U^{\alpha})$ of $M$ with adequate classical conditions of compatibility
between $\left(  U^{\alpha},\omega^{\alpha}\right)  $ and $\left(  U^{\beta
},\omega^{\beta}\right)  $ where $U_{a}\cap U_{\beta}\neq\emptyset$.

\begin{remark}
\label{R_ExistsConnection} It is classical that if $M$ is smooth paracompact, then there always exists a connection on $M$ and also on each Banach bundle over $M$. However, these
assumptions impose the same assumptions on the Banach space $\mathbb{M}$.
\newline
On the other hand, it is well known that there exist linear connections on a Banach manifold without such assumptions.
For instance, if $TM\equiv M\times\mathbb{M}$ there always exists a (trivial) connection on $M$.
But there are further situations for which a linear connection exists on a Banach manifold. For example, there exist linear connections on loop spaces (see for instance \cite{CrFa}) or on the manifold $\mathcal{M}(\mu)$ of strictly positive probability densities of a probability space $(\Omega,\Sigma,\mu)$ (cf. \cite{LoQu}).
\end{remark}

\begin{definition}\label{D_Koszul}
A Koszul connection on $E$ is a $\mathbb{R}$-bilinear map
$\nabla:\mathfrak{X}(M)\times\underline{E}\rightarrow\underline{E}$ which
fulfills the following properties:

-- $\nabla_{X}\left(  f\sigma\right)  =df(X)\sigma+f\nabla_{X}\sigma$

-- $\nabla_{fX}\sigma=f\nabla_{X}\sigma$

for any function $f$ on $M$, $X\in\chi(M)$ and $\sigma\in\underline{E}$.
\end{definition}

\noindent Given  a linear connection $D$  on a Banach bundle $\pi : E \rightarrow M$,  we obtain a  covariant derivative  $\nabla:\mathfrak{X}(M)\times\underline{E}\rightarrow\underline{E}$
which is a Koszul connection.  Since any (linear)  connection induces naturally a (linear) connection  on the restriction $E_{| U}$ of $E$ to any open set $U$ of $M$, we also obtain a covariant derivative
$\nabla^U:\mathfrak{X}(U)\times \underline{E_{|U}} \rightarrow \underline{E_{|U}}$ with the correspondent previous properties
for any function $f$ on $U$, $X\in\mathfrak{X}(U)$ and $\sigma\in\underline{E_{|U}}$.\\

\noindent {\it Unfortunately, in general, a Koszul connection   may be  not localizable  in the following  sense}: \\
since any local section of $E$ (resp. any local vector field on $M$) can not be always  extended to a global section of $E$ (resp. to a global vector field on $M$), the previous operator $\nabla $ can not  always induce a (local) operator $\nabla^U$ as previously. Therefore, in this work, a {\it  Koszul connection will always be taken in the sense of the covariant derivative associated to a linear connection $D$ on $E$}. In particular, for any $x\in M$,  the value $\nabla_{X}\sigma(x)$ only depends on of the value of $X$ at $x$ and the $1$-jet of $\sigma$ at $x$.

In a local trivialization $E_{|U}\equiv U\times\mathbb{E}$, a local section
$\sigma$ of $E$, defined on $U$, can be identified with a map $\sigma
:U\rightarrow\mathbb{E}$. Then $\nabla$ has the local expression:
\[
\nabla_{X}\sigma=d\sigma(X)+\Gamma(\sigma,X)
\]
where $\Gamma$, smooth map from $U$ to $L^{2}(\mathbb{E},\mathbb{M};\mathbb{E})$, is the local Christoffel components of the connection  $D$ which will be  also called the local \textit{Christoffel components} of $\nabla$.

\begin{remark}\label{koszulreg}
If $M$ is smooth regular, then, as classically in finite dimension, any covariant  derivative $\nabla:\mathfrak{X}(M)\times \underline{E}\rightarrow\underline{E}$ which
fulfills the previous  properties (i) and (ii) is localizable. Therefore, in this case, there is a one-to-one correspondence between  such covariant derivative  and linear connection on $E$ as in the finite dimensional framework.
\end{remark}

Finally if $E_{|U}\equiv U\times\mathbb{E}$ and $E_{|U^{\prime}}\equiv
U^{\prime}\times\mathbb{E}$ are local trivializations such that $U\cap
U^{\prime}\not =\emptyset$, then we have a smooth map $g:U\cap U^{\prime
}\rightarrow GL(\mathbb{E})$ such that $\sigma_{|U^{\prime}}=g\sigma_{|U}$ for
any section defined on $U\cup U^{\prime}$. Therefore the Christoffel component
$\Gamma$ and $\Gamma^{\prime}$ of $\nabla$ on $U\cap U^{\prime}$ are linked by
the relation
\[
\Gamma^{\prime}(X,\sigma)=g^{-1}dg(X,\sigma)+g^{-1}\Gamma(X,g \sigma).
\]

\subsection{Direct limit of Banach connections}

\begin{definition}
Let $\left(  E_{n},\pi_{n},M_{n}\right)  _{n\in\mathbb{N}^{\ast}}$
be a strong ascending   sequence of Banach vector bundles where $\varepsilon_{n}%
^{n+1}:M_{n}\longrightarrow M_{n+1}$ and $\lambda_{n}^{n+1}:E_{n}%
\longrightarrow E_{n+1}$ are the compatible bonding maps. \newline A sequence
of connections $D_{n}:TE_{n}\longrightarrow E_{n}$ is called a strong ascending  sequence
of Banach connections if%
\[
\lambda_{n}^{n+1}\circ D_{n}=D_{n+1}\circ T\lambda_{n}^{n+1}.
\]

\end{definition}

\begin{theorem}
\label{T_DirectLimitConnections}Let $\left(  D_{n}\right)
_{n\in\mathbb{N}^{\ast}}$ be a strong ascending  sequence of Banach connections on an
ascending sequence $\left(  E_{n},\pi_{n},M_{n}\right)
_{n\in\mathbb{N}^{\ast}}$ of Banach bundles and assume that $(M_{n}%
)_{n\in\mathbb{N}^{\ast}}$ has the direct limit chart property at each point
of $x\in M=\underrightarrow{\lim}M_{n}$.

Then the direct limit $D=\underrightarrow{\lim}D_{n}$ is a connection on the
convenient vector bundle $\left(  \underrightarrow{\lim}E_{n},\underrightarrow
{\lim}\pi_{n},\underrightarrow{\lim}M_{n}\right)$.
\end{theorem}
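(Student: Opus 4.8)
The plan is to reduce the statement to three ingredients: the identification of the kinematic tangent bundle and vertical bundle of $E:=\underrightarrow{\lim}E_n$ with the direct limits of the corresponding Banach bundles, the fact that the consistency condition on $(D_n)$ produces a conveniently smooth direct limit map, and the naturality of the vertical lift. First I would record the ambient structures. By Proposition \ref{P_StructureOnDirectLimitLinearBundles}, $E=\underrightarrow{\lim}E_n$ is a convenient vector bundle over $M=\underrightarrow{\lim}M_n$. Since $(M_n)$ has the direct limit chart property, one checks, exactly as in the remarks following Definition \ref{D_AscendingSequenceBanachVectorBundles} for $(TM_n)$, that $(TE_n)_{n\in\mathbb{N}^\ast}$, $(VE_n)_{n\in\mathbb{N}^\ast}$ and $(\pi_n^\ast E_n)_{n\in\mathbb{N}^\ast}$ are again strong ascending sequences of Banach vector bundles with the direct limit chart property; hence, again by Proposition \ref{P_StructureOnDirectLimitLinearBundles}, $TE=\underrightarrow{\lim}TE_n$, $VE=\underrightarrow{\lim}VE_n$ and $\pi^\ast E=\underrightarrow{\lim}\pi_n^\ast E_n$ are convenient vector bundles. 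Because the tangent functor, the pull-back, the canonical inclusion $J_n\colon VE_n\hookrightarrow TE_n$ and the vertical lift $vl_{E_n}\colon \pi_n^\ast E_n\to VE_n$ are all compatible with the bonding maps $\lambda_n^{n+1}$ and $\varepsilon_n^{n+1}$ (and each is a $C^\infty$ Banach morphism), Proposition \ref{P_DirectLimitMap} together with Lemma \ref{L_Cinftycinfty} gives conveniently smooth maps $p_E=\underrightarrow{\lim}p_{E_n}$, $T\pi=\underrightarrow{\lim}T\pi_n$, $J=\underrightarrow{\lim}J_n$ and $vl_E=\underrightarrow{\lim}vl_{E_n}$, with $VE=\ker T\pi$ and $vl_E$ an isomorphism onto $VE$. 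Making these identifications precise, i.e. checking that the tangent and vertical bundle constructions, the pull-back and the structure maps $J$, $vl_E$, $\widehat{\pi}$ all commute with the direct limits and preserve the direct limit chart property, is the main technical obstacle; once it is settled the rest is formal.

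Next, the hypothesis $\lambda_n^{n+1}\circ D_n=D_{n+1}\circ T\lambda_n^{n+1}$ is exactly the consistency condition of Section \ref{*DL_TopologicalVectorSpaces}, so the family $(D_n)$ induces a map $D=\underrightarrow{\lim}D_n\colon TE\to E$ with $D\circ T\lambda_n=\lambda_n\circ D_n$; each $D_n$ being a $C^\infty$ morphism of fibrations over $\pi_n$, Proposition \ref{P_DirectLimitMap} (with Lemma \ref{L_Cinftycinfty}) shows that $D$ is conveniently smooth and that $\pi\circ D=p_E$, i.e. $D$ is a morphism of fibrations over $\pi$. To see that $D$ is a connector in the sense of Subsection \ref{banachconnection}, introduce for each $n$ the bundle morphism $V_n\colon TE_n\to VE_n$, $V_n(w)=vl_{E_n}\bigl(p_{E_n}(w),D_n(w)\bigr)$, which is the connection associated with $D_n$, so that $D_n=\widehat{\pi}_n\circ vl_{E_n}^{-1}\circ V_n$ and $V_n\circ J_n=\operatorname{Id}_{VE_n}$. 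Using the naturality of $p$ and of the vertical lift together with the hypothesis on $(D_n)$, one checks that the $V_n$ are compatible with the bonding maps, so $V:=\underrightarrow{\lim}V_n\colon TE\to VE$ is a well defined conveniently smooth bundle morphism. Then $V\circ J=\underrightarrow{\lim}(V_n\circ J_n)=\underrightarrow{\lim}\operatorname{Id}_{VE_n}=\operatorname{Id}_{VE}$, so $V$ is a connection on $E$ in the sense of Definition \ref{D_Connection}, and its connector $\widehat{\pi}\circ vl_E^{-1}\circ V=\underrightarrow{\lim}\bigl(\widehat{\pi}_n\circ vl_{E_n}^{-1}\circ V_n\bigr)=\underrightarrow{\lim}D_n=D$ is precisely the direct limit map, which is the assertion of the theorem.

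Finally, it is worth recording the local picture, which also confirms the conclusion directly. Choose a direct limit chart $(U=\underrightarrow{\lim}U_n,\phi=\underrightarrow{\lim}\phi_n)$ over which each $E_n$ is trivial (assumptions (4) and (5) of Definition \ref{D_AscendingSequenceBanachVectorBundles}); in such coordinates $\varepsilon_n^{n+1}$ and $\lambda_n^{n+1}$ are the linear inclusions, so $T\lambda_n^{n+1}$ reads $(x,u,y,v)\mapsto(\iota x,\iota u,\iota y,\iota v)$ in the induced trivializations of $TE_n$. Writing the Banach local form $D_n(x,u,y,v)=(x,v+\omega_n(x,u)y)$ with $\omega_n\colon U_n\times\mathbb{E}_n\to L(\mathbb{M}_n,\mathbb{E}_n)$ smooth, the hypothesis becomes $\iota(\omega_n(x,u)y)=\omega_{n+1}(\iota x,\iota u)(\iota y)$; hence for fixed $(x,u)$ the operators $\omega_m(x,u)$ cohere into a bounded operator $\omega(x,u)\in L(\underrightarrow{\lim}\mathbb{M}_n,\underrightarrow{\lim}\mathbb{E}_n)$ (bounded by Lemma \ref{L_LBCspace}(ii) since it is continuous on each member), and, by cartesian closedness of the convenient category together with Lemma \ref{L_Cinftycinfty}, the assignment $(x,u)\mapsto\omega(x,u)$ is conveniently smooth. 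Thus $D$ has the connector form $(x,u,y,v)\mapsto(x,v+\omega(x,u)y)$ with local Christoffel component $\omega=\underrightarrow{\lim}\omega_n$. Moreover, if each $D_n$ is a linear Banach connection, then each $\omega_n$ is linear in $u$, hence so is $\omega$, and $D=\underrightarrow{\lim}D_n$ is a linear convenient connection.
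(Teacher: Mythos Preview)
Your argument is correct. The paper's own proof is purely local: it fixes a direct limit chart $(U^\alpha,\phi^\alpha)$ satisfying assumptions (4) and (5) of Definition \ref{D_AscendingSequenceBanachVectorBundles}, writes each $D_i$ in the induced trivialization as $D_i^\alpha(x,u,y,v)=(x,v+\omega_i^\alpha(x,u)y)$, and then verifies by a diagram chase that the compatibility hypothesis $\lambda_i^{i+1}\circ D_i=D_{i+1}\circ T\lambda_i^{i+1}$ translates into $(\widehat{\varepsilon_i^{i+1}}^\alpha\times\widehat{\lambda_i^{i+1}})\circ D_i^\alpha=D_{i+1}^\alpha\circ(\widehat{\varepsilon_i^{i+1}}^\alpha\times\widehat{\lambda_i^{i+1}}\times\widehat{\varepsilon_i^{i+1}}\times\widehat{\lambda_i^{i+1}})$, so that $(D_i^\alpha)$ and hence $(\omega_i^\alpha)$ form direct systems. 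Your third paragraph is exactly this computation.

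Your first two paragraphs give a genuinely different, coordinate-free route: you identify $TE$, $VE$, $\pi^\ast E$, $J$, $vl_E$ as direct limits of their Banach counterparts, then push the connection morphism $V_n$ through the limit via the naturality of the vertical lift and conclude $V\circ J=\operatorname{Id}_{VE}$ by functoriality. This is cleaner conceptually and explains \emph{why} the construction works, but it carries the cost you yourself flag: one must check that $(TE_n)$, $(VE_n)$, $(\pi_n^\ast E_n)$ are again strong ascending sequences with the direct limit chart property, which is routine but not entirely free. The paper sidesteps this by never leaving a single chart; your approach pays that price up front and then gets the result by pure diagram chasing. Both are valid; the paper's is shorter, yours is more structural.
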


\begin{proof}
Let $x$ be in $\underrightarrow{\lim}M_{n}$. We suppose that $x\in M_{n_{0}}$.
According to Definition  \ref{D_AscendingSequenceBanachVectorBundles}, let
 $\left(  U^{\alpha}%
,\phi^{\alpha}\right)  $ be a chart of $M=\varinjlim M_{n}$ around $x$ which satisfies the assumption (4) and (5), where
$\phi^{\alpha}:U^{\alpha}=\bigcup\limits_{i\geq n_{0}}U_{i}^{\alpha
}\longrightarrow O^{\alpha}=\bigcup\limits_{i\geq n_{0}}O_{i}^{\alpha}$ with
$\left(  U_{i}^{\alpha},\phi_{i}^{\alpha}\right)  $ is a chart around $x_{i}$
and $O_{i}^{\alpha}=\phi_{i}^{\alpha}\left(  U_{i}^{\alpha}\right)
\subset\mathbb{M}_{i}$. Moreover, ${E_i}_{| U_i}$ is trivial.\newline Denote by $D_{i}^{\alpha}$ the expression of
the connection $D_{i}$ in local charts. We then have $D_{i}^{\alpha}%
(x_{i}^{\alpha},u_{i}^{\alpha},y_{i}^{\alpha},v_{i}^{\alpha})=(x_{i}^{\alpha
},v_{i}^{\alpha}+\omega_{i}^{\alpha}(x_{i}^{\alpha},u_{i}^{\alpha}%
)y_{i}^{\alpha})$ where $\omega_{i}^{\alpha}$ is a smooth map from
$O_{i}^{\alpha}\times\mathbb{E}_{i}$ to the space $L(\mathbb{M}_{i}%
,\mathbb{E}_{i})$ of bounded linear operators from $\mathbb{M}_{i}$ to
$\mathbb{E}_{i}$.

Using the relations {$\lambda_{i}^{i+1}\circ D_{i}$}$=${$D_{i+1}\circ T\lambda
_{i}^{i+1}$} we have the following diagram:
\[%
\begin{array}[c]{ccccccc}%
 O_{i}^{\alpha}\times\mathbb{E}_{i}\times\mathbb{M}_{i}\times\mathbb{E}_{i} &
\overset{T{\phi}_{E_{i}}^{\alpha}}{\longleftarrow} &  T\left(
E_{i|U_{i}^{\alpha}}\right)   & \overset{T\lambda_{i}^{i+1}}{\longrightarrow}
& T\left(  E_{i+1|U_{i+1}^{\alpha}}\right)   &
\overset{T{\phi}_{E_{i+1}}^{\alpha}}{\longrightarrow} & O_{i+1}^{\alpha}%
\times\mathbb{E}_{i+1}\times\mathbb{M}_{i+1}\times\mathbb{E}_{i+1}\\
D_{i}^{\alpha}\downarrow &  & D_{i} \downarrow &  &
\downarrow D_{i+1} &  & \downarrow D_{i+1}^{\alpha}\\
O_{i}^{\alpha}\times\mathbb{E}_{i} & \overset{{\phi}_{E_{i}}^{\alpha}%
}{\longleftarrow} & E_{i|U_{i}^{\alpha}} & \overset{\lambda_{i}%
^{i+1}}{\longrightarrow} & E_{i+1|U_{i+1}^{\alpha}} & \overset{{\phi
}_{E_{i+1}}^{\alpha}}{\longrightarrow} & O_{i+1}^{\alpha}\times\mathbb{E}%
_{i+1}%
\end{array}
\]

Using the expression in local coordinates $\widehat{\varepsilon_{i}^{i+1}%
}^{\alpha}:$ $O_{i}^{\alpha}\longrightarrow O_{i+1}^{\alpha}$ and the map
$\widehat{\lambda_{i}^{i+1}}:\mathbb{E}_{i}\longrightarrow\mathbb{E}_{i+1}$ we
then obtain that $\left(  D_{i}^{\alpha}\right)  _{i\geq n}$ can be realized
as a direct limit because we have:%
\[%
\begin{array}
[c]{ll}%
\left(  \widehat{\varepsilon_{i}^{i+1}}^{\alpha}\times\widehat{\lambda
_{i}^{i+1}}\right)  \circ D_{i}^{\alpha} & =\left(  \widehat{\varepsilon
_{i}^{i+1}}^{\alpha}\times\widehat{\lambda_{i}^{i+1}}\right)  \circ\left(
{\phi}_{E_{i}}^{\alpha}\circ D_{i}\circ\left(  T{\phi}_{E_{i}}^{\alpha
}\right)  ^{-1}\right)  \\
& ={\phi}_{E_{i+1}}^{\alpha}\circ\underline{\lambda_{i}^{i+1}\circ D_{i}}%
\circ\left(  T{\phi}_{E_{i}}^{\alpha}\right)  ^{-1}\\
& ={\phi}_{E_{i+1}}^{\alpha}\circ\underline{D_{i+1}\circ T\lambda_{i}^{i+1}%
}\circ\left(  T{\phi}_{E_{i}}^{\alpha}\right)  ^{-1}\\
& ={\phi}_{E_{i+1}}^{\alpha}\circ\left(  {\phi}_{E_{i+1}}^{\alpha}\right)
^{-1}\circ D_{i+1}^{\alpha}\circ T{\phi}_{E_{i+1}}^{\alpha}\circ T\lambda
_{i}^{i+1}\circ\left(  T{\phi}_{E_{i}}^{\alpha}\right)  ^{-1}\\
& =D_{i+1}^{\alpha}\circ\left(  \widehat{\varepsilon_{i}^{i+1}}^{\alpha}%
\times\widehat{\lambda_{i}^{i+1}}\times\widehat{\varepsilon_{i}^{i+1}}%
\times\widehat{\lambda_{i}^{i+1}}\right)
\end{array}
\]

We obtain an analogous result for the smooth Banach local forms\\ $\omega
_{i}^{\alpha}:U_{i}^{\alpha}\times\mathbb{E}_{i}\longrightarrow L(\mathbb{M}%
_{i},\mathbb{E}_{i})$.
\end{proof}

Using the intrinsic link between a connection $D$ and a Koszul connection
$\nabla$ we get the following result:

\begin{corollary}
\label{C_DLKoszulConnections}Let $\left(  D_{n}\right)  _{n\in\mathbb{N}%
^{\ast}}$ be a strong ascending  sequence of Banach connections on a direct sequence
$\left(  E_{n},\pi_{n},M_{n}\right)  _{n\in\mathbb{N}^{\ast}}$ of
Banach bundles and consider the associated Koszul connections $\left(
\nabla_{n}\right)  _{n\in\mathbb{N}^{\ast}}$. 
\newline
The direct limit $\nabla=\underrightarrow{\lim}\nabla_{n}$ is a Koszul connection on the
convenient vector bundle $\left(  \underrightarrow{\lim}E_{n},\underrightarrow
{\lim}\pi_{n},\underrightarrow{\lim}M_{n}\right)$.
\end{corollary}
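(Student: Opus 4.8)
The plan is to obtain this as an immediate consequence of Theorem \ref{T_DirectLimitConnections} combined with the intrinsic correspondence between a linear connection and its associated covariant derivative recalled in Subsection \ref{banachconnection}. Since each $\nabla_n$ is, by definition, the covariant derivative of $D_n$, each $D_n$ is a \emph{linear} Banach connection; its local form $\omega_n^\alpha(x,u)y$ is therefore linear in $u$, and passing to the limit in the commutative diagram appearing in the proof of Theorem \ref{T_DirectLimitConnections} shows that the connector $D=\underrightarrow{\lim}D_n$ produced there is again linear on the convenient bundle $\left(\underrightarrow{\lim}E_n,\underrightarrow{\lim}\pi_n,\underrightarrow{\lim}M_n\right)$, with local form $\omega^\alpha=\underrightarrow{\lim}\omega_n^\alpha$.

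Next I would pass to the Christoffel picture. Fixing a direct limit chart $\left(U=\underrightarrow{\lim}U_n,\phi=\underrightarrow{\lim}\phi_n\right)$ in which each ${E_n}_{|U_n}$ is trivial (assumptions (4) and (5) of Definition \ref{D_AscendingSequenceBanachVectorBundles}), the last lines of the proof of Theorem \ref{T_DirectLimitConnections} show that the local Christoffel components $\Gamma_n$ of $D_n$, with $\Gamma_n(x)(u,y)=\omega_n^\alpha(x,u)y$, form a consistent sequence of smooth Banach maps; hence $\Gamma=\underrightarrow{\lim}\Gamma_n$ is a conveniently smooth map from $U$ to $L^2(\underrightarrow{\lim}\mathbb{E}_n,\underrightarrow{\lim}\mathbb{M}_n;\underrightarrow{\lim}\mathbb{E}_n)$ by Proposition \ref{P_DirectLimitMap} (equivalently Lemma \ref{L_Cinftycinfty}). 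Setting, over such a trivialization, $\nabla_X\sigma=d\sigma(X)+\Gamma(\sigma,X)$ for $X\in\mathfrak{X}(M)$ and $\sigma\in\underline{E}$, one gets a conveniently smooth section of $E$, since $d\sigma(X)$ is conveniently smooth by Lemma \ref{L_Cinftycinfty}; the two identities of Definition \ref{D_Koszul} follow at once from this local expression exactly as in the Banach case. In accordance with the convention fixed after Definition \ref{D_Koszul}, $\nabla$ is thus a Koszul connection on the convenient bundle, namely the covariant derivative of $D=\underrightarrow{\lim}D_n$.

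It remains to identify this $\nabla$ with $\underrightarrow{\lim}\nabla_n$, and this is the only point requiring care: a conveniently smooth section of $\underrightarrow{\lim}E_n$ need not restrict to conveniently smooth sections of the members $E_n$, so the equality $\nabla=\underrightarrow{\lim}\nabla_n$ cannot be read sectionwise and must instead be understood through the local Christoffel data. Here one uses that the covariant derivative attached to a linear connection is localizable --- the value $\nabla_X\sigma(x)$ depends only on $X(x)$ and the $1$-jet of $\sigma$ at $x$, as noted in Subsection \ref{banachconnection} --- so the comparison reduces to a direct limit chart, where $\nabla_n$ is given by $d\sigma(X)+\Gamma_n(\sigma,X)$ and $\nabla$ by $d\sigma(X)+\Gamma(\sigma,X)$ with $\Gamma=\underrightarrow{\lim}\Gamma_n$; the consistency relations among the $\Gamma_n$ then yield $\nabla=\underrightarrow{\lim}\nabla_n$. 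Everything else is a routine transcription, via Lemma \ref{L_Cinftycinfty} and Proposition \ref{P_DirectLimitMap}, of the Banach statement to the convenient setting.
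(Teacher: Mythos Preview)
Your proposal is correct and follows the same route the paper intends: the corollary is stated there as an immediate consequence of Theorem \ref{T_DirectLimitConnections} together with the intrinsic correspondence between a linear connection $D$ and its Koszul connection $\nabla$ recalled in Subsection \ref{banachconnection}, with no further argument given. You have simply unpacked that one-line justification---checking that linearity of the connector survives the direct limit, that the Christoffel components $\Gamma_n$ pass to a consistent limit $\Gamma$, and that the identification $\nabla=\underrightarrow{\lim}\nabla_n$ is to be read through these local data rather than sectionwise---which is a welcome elaboration of points the paper leaves implicit.
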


\begin{example}
\label{Ex_DL_EllplocRn}Denote by $L_{loc}^{p}\left(  \mathbb{R}^{m}\right)  $
the space of locally $L^{p}$ functions on $\mathbb{R}^{m}$ ($1\leq p<+\infty
$). A function belongs to $L_{loc}^{p}\left(  \mathbb{R}^{m}\right)  $ if and
only if its restriction to any compact set $K$ of $\mathbb{R}^{m}$ belongs to
$L^{p}\left(  K\right)  $. Since $\mathbb{R}^{m}$ is an ascending sequence of
compact sets $K_{n}^{m}$ (where $K_{n}^{m}\subset\overset{\circ}{K_{n+1}%
^{m}\text{)}}$, we have%
\[
L_{loc}^{p}\left(  \mathbb{R}^{m}\right)  =\underrightarrow{\lim}L^{p}\left(
K_{n}^{m}\right).
\]

Moreover, the closure $\widehat{K_{n}^{m}}$ of the open set $K_{n+1}%
^{m}\backslash K_{n}^{m}$ is also compact and we have $L^{p}\left(
K_{n+1}^{m}\right)  =L^{p}\left(  K_{n}^{m}\right)  \oplus L^{p}\left(
\widehat{K_{n}^{m}}\right)  $. Therefore the sequence of Banach spaces
$\left(  L^{p}\left(  K_{n}^{m}\right)  \right)  _{n\in\mathbb{N}^{\ast}}$ is
an ascending sequence of complemented Banach spaces. Since the tangent bundle
to each $L^{p}\left(  K_{n}^{m}\right)  $ is trivial, there exists a (trivial)
Koszul connection on this Banach bundle. Therefore we get a Koszul connection on $L_{loc}^{p}\left(  \mathbb{R}^{m}\right)$.
\end{example}

\begin{example}
\label{Ex_DLCartesianProductBanachLieGroups}Let $\left(  H_{n}\right)
_{n\in\mathbb{N}^{\ast}}$ be a sequence of Banach Lie groups and consider the
Banach Lie group of cartesian products $G_{n}=%
{\displaystyle\prod\limits_{k=1}^{n}}
H_{k}.$ The weak direct product $%
{\displaystyle\prod\nolimits_{k\in N^{\ast}}^{\ast}}
H_{k}$ is the set of all sequences $\left(  h_{n}\right)  _{n\in
\mathbb{N}^{\ast}}$ such that $h_{n}=1$ for all but finitely many $n$. The
weak direct product is a topological group for the box topology (see
\cite{Glo3}, 4.). In fact, this weak direct product has a structure of Lie
group modeled on the locally convex topological space $%
{\displaystyle\bigoplus\limits_{k\in\mathbb{N}^{\ast}}}
\mathfrak{H}_{k}$ where $\mathfrak{H}_{k}$ is the Lie algebra of $H_{k}$. The
tangent space $TG_{n}$ is the vector bundle $G_{n}\times%
{\displaystyle\bigoplus\limits_{k=1}^{n}}
\mathfrak{H}_{k}$. Moreover, $TG_{n}$ is a complemented subbundle of $TG_{n+1}$
and is naturally endowed with the (trivial) Koszul connection.
\end{example}

\subsection{Sprays on an anchored Banach bundle}

We begin this subsection with a brief presentation of the theory of semi-sprays on a Banach anchored bundle according to \cite{Ana}.

Let $\pi:E\rightarrow M$ be a Banach vector bundle on a Banach manifold
modeled on a Banach space $\mathbb{M}$ whose fiber is modeled on a Banach
space $\mathbb{E}$.

\begin{definition}
\label{D_AnchoredBanachBundle}A morphism of vector bundles $\rho:E\rightarrow TM$ is
called an \textit{anchor}. $\left(E,\pi,M,\rho\right)$ is then called a Banach
anchored bundle.
\end{definition}

\begin{definition}
\label{D_SemiSpray} A semi-spray on an anchored bundle is a vector field $S$
on $E$ such that $T\pi\circ S=\rho$.
\end{definition}

This means that, in a local trivialization $E_{|U}\equiv U\times\mathbb{E}$, we
have $T\pi(S(x,u))=\rho(x)u$ for all $(x,u)\in E_{|U}$.\newline A smooth curve
$c:I\subset\mathbb{R}\rightarrow E$ is called \textit{admissible} if the
tangent vector $\gamma^{\prime}(t)$ of $\gamma=\pi\circ c$ is precisely
$\rho(c(t))$.\\

From \cite {Ana}, we have the following characterization of a semi-spray:
\begin{theorem}
A vector field $S$ on $E$ is a semi-spray if and only if each integral curve
of $S$ is an admissible curve.
\end{theorem}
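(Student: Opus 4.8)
The statement to prove is the characterization of semi-sprays: a vector field $S$ on $E$ is a semi-spray if and only if each integral curve of $S$ is an admissible curve.

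\textbf{Proof plan.} The plan is to unwind both conditions in a local trivialization $E_{|U}\equiv U\times\mathbb{E}$ and match them directly. Recall that $S$ is a semi-spray means $T\pi\circ S=\rho$, i.e. in local coordinates $T\pi(S(x,u))=\rho(x)u$ for all $(x,u)$; and a curve $c:I\to E$ is admissible means that, writing $\gamma=\pi\circ c$, one has $\gamma'(t)=\rho(c(t))$ for all $t\in I$. So the whole argument is a comparison between $T\pi\circ S$ evaluated along $c$ and the derivative of $\pi\circ c$.

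First I would establish the elementary chain-rule identity: for any smooth curve $c:I\to E$, if $\gamma=\pi\circ c$ then $\gamma'(t)=T\pi(c'(t))$. Then, for the forward direction, suppose $S$ is a semi-spray and let $c$ be an integral curve of $S$, so $c'(t)=S(c(t))$ for all $t$. Applying $T\pi$ gives $\gamma'(t)=T\pi(c'(t))=T\pi(S(c(t)))=\rho(c(t))$, using $T\pi\circ S=\rho$; hence $c$ is admissible. For the converse, suppose every integral curve of $S$ is admissible. Fix a point $(x_0,u_0)=e\in E$ and let $c$ be the integral curve of $S$ with $c(0)=e$ (which exists locally by the standard existence theorem for ODEs on Banach manifolds, $S$ being a smooth vector field). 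Admissibility gives $\gamma'(0)=\rho(c(0))=\rho(e)$, while the chain rule gives $\gamma'(0)=T\pi(c'(0))=T\pi(S(c(0)))=T\pi(S(e))$. Therefore $T\pi(S(e))=\rho(e)$; since $e\in E$ was arbitrary, $T\pi\circ S=\rho$, so $S$ is a semi-spray.

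The only mild subtlety — and the step I would be most careful about — is the converse direction's appeal to the existence of an integral curve through every point: one needs that through each $e\in E$ there is at least one integral curve of $S$ defined on a neighborhood of $0$, which is guaranteed because $S$ is a smooth (hence in particular $C^1$, locally Lipschitz) vector field on the Banach manifold $E$, so the Picard–Lindelöf theorem applies in a chart. Everything else is a direct application of the chain rule $\gamma'=T\pi\circ c'$ together with the defining equation $c'=S\circ c$ of an integral curve. No convenient-calculus machinery is needed here since $E$ and $M$ are genuine Banach manifolds in this subsection.
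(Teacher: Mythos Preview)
Your proof is correct. The paper itself does not supply a proof of this statement; it merely cites \cite{Ana} and records the characterization, so there is no paper argument to compare against --- your chain-rule computation together with the local existence of integral curves (Picard--Lindel\"of on the Banach manifold $E$) is exactly the standard argument and fills in what the paper leaves to the reference.
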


In a local trivialization $E_{|U}\equiv U\times\mathbf{E}$, a semi-spray can be
written as

\[
S(x,u)=\left(  x,u,\rho(x)u,-2G(x,u)\right).
\]

The \textit{Euler field} $C$ is the global vector field on $E$ which is
tangent to the fiber of $\pi$ (\textit{i.e. vertical}) and such that the flow of
$C$ is an infinitesimal homothety on each fiber. A semi-spray $S$ is called a
\textit{spray} if $S$ is invariant by the flow of $C$. This condition is
equivalent to the nullity of the Lie bracket $[C,S]$. In this case, in a local
trivialization, the function $G$ in Definition \ref{D_SemiSpray} is linear in
the second variable.\newline Conversely, a spray can be given by a collection
$(U^{\alpha},G^{\alpha})$ of local maps $G^{\alpha}:U^{\alpha}\times
\mathbb{E}\rightarrow L(\mathbb{E},\mathbb{E})$ on a covering $U^{\alpha}$ of
$M$ with adequate classical conditions of compatibility between $(U^{\alpha
},G^{\alpha})$ and $(U^{\beta},G^{\beta})$ when $U^{\alpha}\cap U^{\beta}%
\neq\emptyset$ (cf. \cite{Ana}).

Given a Koszul connection $\nabla$ on $E$ and an admissible curve
$c:I\rightarrow E$ as in the infinite dimensional case, we associate an
operator of differentiation $\nabla^{c}$ of the set of sections of $E$ along
$\gamma=\pi\circ c$ given by $\nabla^{c}\sigma=\nabla_{\dot{\gamma}}\sigma$.
In particular $c$ is a section along $\gamma$.

\begin{definition}
An admissible curve $c$ is called a geodesic of $\nabla$ if $\nabla
_{\dot{\gamma}}c=\nabla^{c}c\equiv0$.
\end{definition}

In a local trivialization $E_{|U}\equiv V\times\mathbf{E}$, an admissible curve
$c:I\rightarrow E_{|U}$ is a geodesic of $\nabla$ if and only $\pi\circ c$ is
a solution of the following differential equation:%

\[%
\begin{cases}
\dot{x}=\rho(x)u\\
\dot{u}=\Gamma(x)(u,\dot{x})
\end{cases}
\]
where $\Gamma$ is the local Christoffel component of $\nabla$ on $E_{|U}$.
Therefore, if we set $G(x,u)=-\dfrac{1}{2}\Gamma(x)(u,\rho(x)u)$, we get a
vector field $S_{U}$ on $E_{|U}$ which satisfies the relation given in
Definition \ref{D_SemiSpray} and so is a spray on $E_{|U}$. Now, according to
the compatibility conditions between the local Christoffel components, we
obtain a unique global spray associated to $\nabla$. Conversely, as in the
case of $E=TM$ (cf. \cite{Vil}), given a spray $S$ on $E$, we can associate a
unique connection $\nabla$ whose associated spray is $S$.\newline

Taking into account the classical theorem of existence of a local flow of a
vector field on a Banach manifold, we obtain:

\begin{theorem}
\label{T_SprayKoszul}Let $(E,M,\rho)$ be a Banach anchored bundle. There exists a
spray on $E$ if and only there exists a Koszul connection $\nabla$ on $E$.
Moreover, there exists a canonical correspondence one-to-one between sprays and
Koszul connections on $E$ so that an admissible curve is a geodesic of the
Koszul connection $\nabla$ if and only if this curve is an integral curve of
the unique $S$ associated to $\nabla$.
\newline
When $E=TM$, there exists an
exponential map $\operatorname{Exp}:\mathcal{U}\subset TM\rightarrow M$,
defined on an open neighborhood $\mathcal{U}$ of the zero section, such that
${p_{M}}_{|\mathcal{U}}:{\mathcal U}\rightarrow M$ is a fibration whose  each fiber
$\mathcal{U}_{x}$ is a star-shaped open neighborhood of $0$ in $T_{x}M$.
Moreover, the differential of the restriction $\operatorname{Exp}_{x}$ of
$\operatorname{Exp}$ to $\mathcal{U}_{x}$ is equal to $Id_{T_{x}M}$ at $0$. In
particular, $\operatorname{Exp}_{x}$ is a diffeomorphism of a star-shaped open
neighborhood of $0\in T_{x}M$ onto an open neighborhood of $x\in M$.
\end{theorem}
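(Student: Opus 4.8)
The plan is to read off the correspondence between Koszul connections and sprays from their local expressions, deduce the geodesic/integral‑curve equivalence by comparing the two ODE systems, and — in the case $E=TM$ — construct $\operatorname{Exp}$ from the flow of the spray. First I would fix a trivializing atlas $(U^\alpha,\phi^\alpha,\Psi^\alpha)$ of $(E,\pi,M)$. Given a Koszul connection $\nabla$ with local Christoffel components $\Gamma^\alpha\colon U^\alpha\to L^2(\mathbb{E},\mathbb{M};\mathbb{E})$, set, as in the discussion preceding the statement, $G^\alpha(x,u)=-\frac{1}{2}\Gamma^\alpha(x)(u,\rho(x)u)$ and $S_{U^\alpha}(x,u)=(x,u,\rho(x)u,-2G^\alpha(x,u))$. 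Using the transition rule $\Gamma'(X,\sigma)=g^{-1}dg(X,\sigma)+g^{-1}\Gamma(X,g\sigma)$ recalled above, together with the fact that $\rho$ is a globally defined bundle morphism, I would check that the $S_{U^\alpha}$ agree on overlaps and hence glue to a global vector field $S$ on $E$; by construction $T\pi\circ S=\rho$, so $S$ is a semi‑spray, and $G^\alpha(x,\cdot)$ being homogeneous of degree two forces $[C,S]=0$, i.e. $S$ is a spray. Conversely, the local coefficients of a given spray are (after the standard identification) homogeneous quadratic in $u$; polarizing them and invoking the gluing cocycle they satisfy yields local bilinear maps $\Gamma^\alpha$ obeying exactly the Christoffel transformation law, hence a global Koszul connection, following the argument of \cite{Vil} for $E=TM$ and checking that the anchor $\rho$ changes nothing essential. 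The two assignments are mutually inverse because the local formulas are, which gives the asserted canonical one‑to‑one correspondence.

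In particular, a spray exists on $E$ precisely when a Koszul connection exists. For the geodesic statement, in a local trivialization $E_{|U}\equiv V\times\mathbb{E}$ an admissible curve $c(t)=(x(t),u(t))$ satisfies $\dot x=\rho(x)u$; it is a geodesic of $\nabla$ iff $\nabla_{\dot\gamma}c=\dot u-\Gamma(x)(u,\dot x)=0$, i.e. iff $\dot u=\Gamma(x)(u,\rho(x)u)=-2G(x,u)$. An integral curve of the spray $S$ associated to $\nabla$ solves in these coordinates exactly the same system $\dot x=\rho(x)u$, $\dot u=-2G(x,u)$. Since both notions are local, an admissible curve is a geodesic of $\nabla$ if and only if it is an integral curve of the corresponding $S$.

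For the last part take $E=TM$ and $\rho=\operatorname{Id}_{TM}$, so that $S$ is an honest vector field on the Banach manifold $TM$ and, by the classical local‑flow theorem, possesses a smooth local flow $\operatorname{Fl}^{S}$ defined on an open set $\mathcal{D}\subset\R\times TM$ containing $\{0\}\times TM$. Because $[C,S]=0$, one checks that $t\mapsto s\cdot\operatorname{Fl}^{S}_{st}(x,v)$ and $t\mapsto\operatorname{Fl}^{S}_{t}(x,sv)$ are both integral curves of $S$ with initial value $(x,sv)$, whence $\operatorname{Fl}^{S}_{t}(x,sv)=s\cdot\operatorname{Fl}^{S}_{st}(x,v)$ whenever one side is defined. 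Put $\mathcal{U}=\{(x,v)\in TM:(1,(x,v))\in\mathcal{D}\}$ and $\operatorname{Exp}=p_{M}\circ\operatorname{Fl}^{S}_{1}\colon\mathcal{U}\to M$. The homogeneity identity shows $\mathcal{U}$ is an open neighborhood of the zero section, that $\mathcal{U}_{x}:=\mathcal{U}\cap T_{x}M$ is star‑shaped about $0$, and that $p_{M}|_{\mathcal{U}}$ is a surjective submersion onto $M$ with these fibers. For fixed $(x,v)$ the curve $s\mapsto\gamma_{v}(s):=p_{M}(\operatorname{Fl}^{S}_{s}(x,v))$ is, by the previous paragraph, the geodesic with $\gamma_{v}(0)=x$, $\dot\gamma_{v}(0)=v$, and the homogeneity identity gives $\operatorname{Exp}_{x}(tv)=\gamma_{v}(t)$; differentiating at $t=0$ yields $T_{0}\operatorname{Exp}_{x}(v)=v$, i.e. $T_{0}\operatorname{Exp}_{x}=\operatorname{Id}_{T_{x}M}$. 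The inverse function theorem on Banach manifolds then makes $\operatorname{Exp}_{x}$ a diffeomorphism from a star‑shaped open neighborhood of $0\in T_{x}M$ onto an open neighborhood of $x\in M$, and smoothness of $\operatorname{Fl}^{S}$ gives smoothness of $\operatorname{Exp}$.

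The main obstacle is bookkeeping rather than conceptual. In the first step one must verify carefully that the homogeneous quadratic spray coefficients and the Christoffel symbols transform compatibly, so that the spray cocycle and the connection cocycle genuinely match up and the two local constructions are globally well defined and mutually inverse; and in the last step one must extract from $[C,S]=0$ the precise homogeneity law of the flow and read off from it both the star‑shaped structure of the fibers of $\mathcal{U}$ and the fibration property. Once these are in place, the inverse‑function‑theorem step and the smoothness of $\operatorname{Exp}$ are routine in the Banach category.
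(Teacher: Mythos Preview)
Your proposal is correct and follows essentially the same route the paper takes: the paper does not give a self-contained proof of this theorem but merely records it as a consequence of the local computations preceding the statement (the formula $G(x,u)=-\tfrac{1}{2}\Gamma(x)(u,\rho(x)u)$ and the remark that the local Christoffel compatibility conditions glue the $S_U$'s), the reference to \cite{Vil} for the converse direction, and ``the classical theorem of existence of a local flow of a vector field on a Banach manifold'' for the exponential map. Your argument simply spells out these same ingredients with more care, in particular the homogeneity-of-flow identity and the inverse function theorem step that the paper leaves implicit.
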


\section{\label{*DL_SequencesAlmostBanachLieAlgebroids}Direct limits of
sequences of almost Banach Lie algebroids}

\subsection{\label{**Algebroid}Almost Banach Lie algebroids}

Let $\left(E,\pi,M,\rho\right)$ be a Banach anchored bundle.

If $\underline{E}$ denotes the $C^\infty(M)$-module of smooth sections of $E$,  the morphism $\rho$ gives rise to a $C^\infty(M)$-module morphism \underline{$\rho$
}$:\underline{E}\rightarrow$\underline{$TM$}=$\mathfrak{X}(M)$ defined for
every $x$ $\in$ $M$ and every section $s$ of $E$ by: $\left(  \underline{\rho
}\left(  s\right)  \right)  \left(  x\right)  =\rho\left(  s\left(  x\right)
\right)  $ and still denoted by $\rho$.

\begin{definition}
\label{D_AlmostLieBracket}An \textit{almost Lie bracket} on an anchored bundle
$(E,\pi,M,\rho)$ is a bilinear map $[\;,\;]_{E}:\underline{E}\times\underline
{E}:\longrightarrow\underline{E}$ which satisfies the following properties:

1. $[\;,\;]_{E}$ is antisymmetric;

2. Leibniz property :
\[
\forall s_{1},s_{2}\in\underline{E},\forall f\in C^{\infty}\left(  M\right)
,\ [s_{1},fs_{2}]_{E}=f.[s_{1},s_{2}]+df(\rho(s_{1})).s_{2}.%
\]
\end{definition}

\begin{definition}
\label{D_LieBracket}A Lie bracket\textbf{ } is an almost Lie bracket whose
jacobiator vanishes:
\[
\forall s_{1},s_{2},s_{3}\in\underline{E},[s_{1},[[s_{2},s_{3}]]+[s_{2}%
,[[s_{3},s_{1}]]+[s_{3},[[s_{1},s_{2}]]=0
\]

\end{definition}

\begin{definition}
\label{D_AlmostLieAlgebroid}An almost Banach Lie algebroid is an anchored
bundle $(E,\pi,M,\rho)$ provided with an almost Lie bracket $[\;,\;]_{E}$. When
$[\;,\;]_{E}$ is in fact a Lie bracket the associated structure $(E,M,\rho
,[.,.]_{E})$ is called a Banach Lie algebroid.
\end{definition}

If $(E,\pi,M,\rho,[\;,\;_{E})$ is a Banach Lie algebroid, $\rho:\underline
{E}\rightarrow\mathfrak{X}(M)$ is a Lie algebra morphism; in particular, we
have $[\rho s_{1},\rho s_{2}]=\rho\left(  \lbrack s_{1},s_{2}]_{E}\right)  $.
\newline

Notice that the converse is not true in general (take $\rho\equiv0$ for instance).

\begin{definition}
\label{D_Algebroid}When we have $[\rho s_{1},\rho s_{2}]=\rho([s_{1}%
,s_{2}]_{E})$ for all sections $s_{1},s_{2}\in\underline{E}$, we will say that
\textit{$\rho$ is a Lie morphism}. In this case $(E,\pi,M,\rho),[\;,\;]_{E})$ is
called an \textbf{  algebroid}.
\end{definition}

In general the almost Lie bracket of an algebroid $(E,\pi,M,\rho,[\;,\;])$ does
not satisfy the Jacobi identity. \newline

\begin{remark}
\label{R_TermilologyAlmostLieAlgebroids}Since the terminology of almost
Poisson bracket seems generally adopted in the most recent papers on
nonholonomic mechanics, in this work we have adopted the definition of an
almost Lie algebroid given in \cite{LMM}. Therefore taking into account the
relation between almost Linear Poisson bracket and almost Lie bracket, this
terminology seems to us well adapted. Therefore and according to \cite{PoPo},
we use the denomination "algebroid" for an almost algebroid such that the
anchor is a morphism of Lie algebras. Note that in \cite{PoPo} or in
\cite{GrJo} an almost Lie algebroid corresponds to the previous definition of
an algebroid and our denomination "almost algebroid " corresponds to
"quasi-Lie algebroid" in \cite{PoPo} or in \cite{GrJo}$.$
\end{remark}


\begin{example}
\label{Ex_StructureLieAlgebroidLinkedActionBanachLieGroup}Consider a smooth
right action $\psi:M\times G\longrightarrow M$ of a connected Lie group $G$
on a Banach manifold $M$. Denote by $\mathcal{G}$ the Lie algebra of $G$. We
then have a natural morphism $\xi$ of Lie algebras from $\mathcal{G}$ to
$\mathfrak{X}(M)$ defined by:%
\[
\xi_{X}\left(  x\right)  =T_{\left(  x,e\right)  }\psi\left(  0,X\right)  .
\]

For any $X$ and $Y$ in $\mathcal{G}$, we have: $\xi_{\left\{  X,Y\right\}
}=\left[  \xi_{X},\xi_{Y}\right]  $ where $\left\{  .,.\right\}  $ denotes the
bracket on the Lie algebra $\mathcal{G}$ (see for instance \cite{KrMi}, 36.12).

On the trivial bundle $M\times\mathcal{G}$, each section can be identified
with a map $\sigma:M\longrightarrow\mathcal{G}.$ We then define a Lie bracket
$\{\{.,.\}\}$ on the set of such sections by:%
\[
\{\{\sigma_{1},\sigma_{2}\}\}\left(  x\right)  =\left\{  \sigma_{1}\left(
x\right)  ,\sigma_{2}\left(  x\right)  \right\}  +d\sigma_{1}\left(
\xi_{\sigma_{2}\left(  x\right)  }\right)  -d\sigma_{2}\left(  \xi_{\sigma
_{1}\left(  x\right)  }\right)  \text{.}%
\]

An anchor $\Psi:M\times\mathcal{G}\longrightarrow TM$ is defined by
$\Psi\left(  x,X\right)  =\xi_{X}\left(  x\right)  $.
\newline
Then $\left(
M\times\mathcal{G},\textrm{pr}_1,M,\Psi,\{\{.,.\}\}\right)  $ is a Banach Lie algebroid.

Moreover, if we denote by $G_{x}$ the closed subgroup of isotropy of a point
$x\in M$ and by $\mathcal{G}_{x}\subset\mathcal{G}$ its Lie subalgebra, we
have $\ker\Psi_{x}=\mathcal{G}_{x}$. If $\mathcal{G}_{x}$ is complemented in $\mathcal{G}$ for any $x\in M$ and $\rho$ has closed range, then the weak distribution $\mathcal{D}%
=\Psi\left(  M\times\mathcal{G}\right)$ is integrable and the leaf through
$x$ is its orbit $\psi\left(  x,G\right)$ (cf. \cite{Pel}, example 4.3, 3.).
\end{example}

Note that in finite dimension it is classical that a
Lie  bracket  $[\;,\;]_{E}$  on an anchored bundle $(E,\pi,M,\rho)$ 
respects the sheaf of sections of $\pi:E\rightarrow M$ or, for short,  is {\bf localizable}  (see for instance \cite{Mar}),  if  the following properties are satisfied:

\begin{enumerate}
\item[(i)]  for any open set $U$ of $M$, there exists a unique  bracket $[\;,\;]_U$ on the space of sections  $\underline{E}_{| U})$ such that, for any $s_1$ and $s_2$ in  $\underline{E}_{| U})$, we have:
$$[{s_1}_{|U},{s_1}_{|U}]_U=([s_1,s_2]_{E})_{| U}$$
\item[(ii)]  (compatibility with restriction) if $V\subset U$ are open sets, then, $[.,.]_U$  induces a unique  Lie bracket  $[.,.]_{UV}$ on $\underline{E}_{| V})$ which coincides with  $[.,.]_V$ (induced by $[.,.]_E$).
\end{enumerate}

By the same arguments as in finite dimension, when $M$ is smooth regular  any Lie bracket  $[\;,\;]_{E}$  on an anchored bundle $(E,\pi,M,\rho)$ is localizable
(cf. \cite{Pel}).

But, in general, for analog reasons as for Koszul connection, we can not prove that any Lie bracket is localizable. Unfortunately in the Banach framework, we have {\bf no example of  Lie algebroid} for which  is not localizable. Therefore at least  for finding  conditions under which a Banach Lie algebroid is integrable this condition is necessary. This condition of localization implies also that a bracket depends on the one jets of sections.   Therefore, in the sequel, we will assume that {\bf  all almost Lie bracket $[.,;]_E$ are localizable.}

\begin{remark}
\label{R_ALBracketAssociatedToKoszulConnection}If there exists a Koszul
connection ${\nabla}$ on $E$, then we get an almost Lie bracket
$[.,.]_{\nabla}$ defined by%
\[
\lbrack s_{1},s_{2}]_{\nabla}=\nabla_{\rho s_{1}}s_{2}-\nabla_{\rho s_{2}%
}s_{1}.
\]
Note that since a $\nabla$ is localizable, so is $[\;,\;]_\nabla$
\end{remark}

\bigskip

When $(E,\pi,M,\rho,[\;,\;])_{E}$ is an almost Banach Lie algebroid we can define
the following operators:

\begin{enumerate}
 \item[(i)] \textit{Lie derivative} $L_{s}^{\rho}$ according to a section $s$ of $E$:

for a smooth function $f\in\Omega^{0}\left(  M,E\right)  =\mathcal{F,}$
\[
L_{s}^{\rho}\left(  f\right)  =L_{\rho\circ s}\left(  f\right)  =i_{\rho\circ
s}\left(  df\right);  \newline%
\]

for a $q$--form $\omega\in\Omega^{q}\left(  M,E\right)  $ (where $q>0$)
\begin{align}
\left(  L_{s}^{\rho}\omega\right)  \left(  s_{1},\dots,s_{q}\right)   &
=L_{s}^{\rho}\left(  \omega\left(  s_{1},\dots,s_{q}\right)  \right)
\nonumber\\
&  -{\sum\limits_{i=1}^{q}}\omega\left(  s_{1},\dots,s_{i-1},\left[
s,s_{i}\right]  _{E},s_{i+1},\dots,s_{q}\right).
\end{align}

\item[(ii)] $\Omega\left(  M,E\right)$-\textit{value derivative} according to a
section\textit{ }$s$ of $E$:

for a smooth function $f\in\Omega^{0}\left(  M,E\right)  =\mathcal{F}$
\[
d_{\rho}f=t_{\rho}\circ df;
\]

for a $q$--form $\omega\in\Omega^{q}\left(  M,E\right)  $ (where $q>0$)
\begin{align*}
\left(  d_{\rho}\omega\right)  \left(  s_{0},\dots,s_{q}\right)   &
={\sum\limits_{i=0}^{q}}\left(  -1\right)  ^{i}L_{s_{i}}^{\rho}\left(
\omega\left(  s_{0},\dots,\widehat{s_{i}},\dots,s_{q}\right)  \right) \\
&  +{\sum\limits_{0\leq i<j\leq q}^{q}}\left(  -1\right)  ^{i+j}\left(
\omega\left(  \left[  s_{i},s_{j}\right]  _{E},s_{0},\dots,\widehat{s_{i}
},\dots,\widehat{s_{j}},\dots,s_{q}\right)  \right).
\end{align*}
\end{enumerate}

In general, we have $d_{\rho}\circ d_{\rho}\not =0$. However, $(E,M,\rho
,[\;,\;])_{E}$ is a Banach Lie algebroid if and only if $d_{\rho}\circ
d_{\rho}=0$.

\begin{definition}
Let $\psi:E\rightarrow E^{\prime}$ be a linear bundle morphism over
$f:M\rightarrow M^{\prime}$.

\begin{enumerate}
\item[(i)] A section $s^{\prime}$ of $E^{\prime}\rightarrow M^{\prime}$ and a
section $s$ of $E\rightarrow M$ are $\psi$-related if $s^{\prime}\circ
f=\psi\circ s$.

\item[(ii)] $\psi$ is a morphism of almost Banach Lie algebroids from $\left(
E,\pi,M,\rho,[\;,\;]_{E}\right)  $ to $\left(  E^{\prime},\pi^{\prime
},M^{\prime} ,\rho^{\prime},[\;,\;]_{E^{\prime}}\right)  $ if:

\begin{enumerate}
\item[(a)] $\rho^{\prime}\circ\psi=Tf\circ\rho$;

\item[(b)] for any pair of $\psi$-related sections $s_{i}^{\prime}$ and $s_{i}%
$ ($i=1,2$), we have:
\newline
$\psi([s_{1},s_{2}])=[s^{\prime}_{1},s^{\prime
}_{2}]^{\prime}\circ f$, i.e. the Lie bracket $[s^{\prime}%
_{1},s^{\prime}_{2}]^{\prime}$ and $[s_{1},s_{2}]$ are $\psi$-related.
\end{enumerate}
\end{enumerate}
\end{definition}

In a dual way, a morphism $\psi:E\rightarrow E^{\prime}$ which satisfies
property (a) is an almost Banach Lie algebroid morphism if the mapping
$\psi^{\ast}:\Omega^{q}\left(  M,E^{\prime}\right)  \rightarrow\Omega
^{q}\left(  M,E\right)  $ defined by:
\[
\left(  \psi^{\ast}\alpha^{\prime}\right)  _{x}\left(  s_{1},\dots
,s_{q}\right)  =\alpha_{f\left(  x\right)  }^{\prime}\left(  \psi\circ
s_{1},\dots,\psi\circ s_{q}\right)
\]
commutes with the differentials:
\[
d_{\rho}\circ\psi^{\ast}=\psi^{\ast}\circ d_{\rho^{\prime}}.
\]

Notice that an almost Banach Lie algebroid $\left(  E,\pi,M,\rho
,[\;,\;]_{E}\right)  $ is a Banach algebroid if and only if the anchor $\rho$
is a morphism of Banach Lie algebroids from $\left(  E,\pi,M,\rho,[\;,\;]_{E}%
\right)  $ to the canonical Banach Lie algebroid $(TM, p_{M},M,Id_{TM},[\;,\;])$.\newline

\subsection{Direct limit of almost Banach Lie algebroids}

As in the Banach framework, if $\pi:E\longrightarrow  M$ is a convenient bundle over a n.n.H. convenient manifold $M$, then we can define \textit{the convenient algebroid or Lie algebroid structure}\footnote{In this case $E$ has a structure of n.n.H. convenient manifold.} $(E,\pi,M,\rho,[\;,\;]_E)$ in an obvious way. Now coming back to the context of sequence of Banach anchored bundles, we have:

\begin{definition}\label{D_DirectSequenceBanachLieAlgebroids}
\begin{enumerate}
\item[(i)] A sequence  $\left(  E_{n},\pi_{n},M_{n},\rho_{n}\right)  _{n\in\mathbb{N}^{\ast}}$ is
called a \textit{strong ascending  sequence of anchored Banach bundles} if

(1)  $\left(  E_{n},\pi_{n},M_{n},\right)  _{n\in\mathbb{N}^{\ast}}$ is a direct sequence of Banach bundles;

(2)  For all $n\leq m$, we have
\[
\rho_{m}\circ\lambda_{n}^{m}=T\varepsilon_{n}^{m}\circ\rho_{n}.
\]
where $\lambda_n^m:E_n\longrightarrow E_m$ and $\epsilon_n^m:M_n\longrightarrow M_m$  are the bonding morphisms.

\item[(ii)] A sequence $\left(  E_{n},\pi_{n},M_{n},\rho_{n},[\;,\;]_{n}\right)
_{n\in\mathbb{N}^{\ast}}$ is called a    \textit{strong ascending  sequence of almost Banach
Lie algebroids} if $\left(  E_{n},\pi_{n},M_{n},\rho_{n}\right)
_{n\in\mathbb{N}^{\ast}}$ is a strong ascending   sequence of anchored Banach bundles with
the additional property:
\newline
 $\lambda_{n}^{m}:E_{n}\rightarrow E_{m}$
is an almost Banach algebroid morphism between the almost Banach Lie
algebroids $\left(  E_{n},\pi_{n},M_{n},\rho_{n},[\;,\;]_{n}\right)  $ and
$\left(  E_{m},\pi_{m},M_{m},\rho_{m},[\;,\;]_{m}\right)  .$
\end{enumerate}
\end{definition}

\begin{theorem}
\label{T_DLBanachLieAlgebroids_ConvenientLieAlgebroids} 1. If $\left(  E_{n},\pi_{n},M_{n},\rho_{n})\right)  _{n\in\mathbb{N}^{\ast}}$ is a
strong ascending   sequence of anchored bundles, then $\left(  \underrightarrow
{\lim}E_{n},\underrightarrow{\lim}\pi_{n},\underrightarrow{\lim}%
M_{n},\underrightarrow{\lim}\rho_{n})\right)  $ is a convenient anchored
bundle. \newline Moreover, $\left(  \underrightarrow{\lim}E_{n}%
,\underrightarrow{\lim}\pi_{n},\underrightarrow{\lim}M_{n},\underrightarrow
{\lim}\rho_{n},\underrightarrow{\lim}[\;,\;]_{n}\right)  $ is a convenient
algebroid (resp. a convenient Lie algebroid) if each $\left(  E_{n},\pi
_{n},M_{n},\rho_{n},[\;,\;]_{n}\right)  $ is a Banach algebroid (resp. a
Banach Lie algebroid) for {$n$}${\in\mathbb{N}^{\ast}}$.
\newline
2. If $\left(  E_{n},\pi_{n},M_{n},\rho_{n},[\;,\;]_{n}\right)  _{n\in
\mathbb{N}^{\ast}}$ is a strong ascending  sequence of almost Banach Lie algebroids, then
$\left(  \underrightarrow{\lim}E_{n},\underrightarrow{\lim}\pi_{n}%
,\underrightarrow{\lim}M_{n},\underrightarrow{\lim}\rho_{n},\underrightarrow
{\lim}[\;,\;]_{n}\right)  $ is an almost convenient Lie algebroid. \newline
Moreover, $\left(  \underrightarrow{\lim}E_{n},\underrightarrow{\lim}\pi
_{n},\underrightarrow{\lim}M_{n},\underrightarrow{\lim}\rho_{n}%
,\underrightarrow{\lim}[\;,\;]_{n}\right)  $ is a convenient algebroid (resp.
a convenient Lie algebroid) if each $\left(  E_{n},\pi_{n},M_{n},\rho
_{n},[\;,\;]_{n}\right)  $ is a Banach algebroid (resp. a Banach Lie
algebroid) for {$n$}${\in\mathbb{N}^{\ast}}$.
\end{theorem}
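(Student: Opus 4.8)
The plan is to assemble the direct limit object out of the pieces already constructed in the preceding sections, the point being that all the analytic difficulties (obtaining a convenient bundle and conveniently smooth structure maps) have been packaged into Proposition \ref{P_StructureOnDirectLimitLinearBundles}, Lemma \ref{L_Cinftycinfty} and Proposition \ref{P_DirectLimitMap}, while all the algebraic identities involved are pointwise (tensorial or first order) and hence pass to the limit.

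For the anchored bundle structure I would argue as follows. By Proposition \ref{P_StructureOnDirectLimitLinearBundles}, $\left(\underrightarrow{\lim}E_{n},\underrightarrow{\lim}\pi_{n},\underrightarrow{\lim}M_{n}\right)$ is a convenient vector bundle modeled on the (LBC)-space $\underrightarrow{\lim}\mathbb{M}_{n}\times\underrightarrow{\lim}\mathbb{E}_{n}$; applying the same proposition to the strong ascending sequence $\left(TM_{n},p_{M_{n}},M_{n}\right)_{n\in\mathbb{N}^{\ast}}$ and using the direct limit chart property (every kinematic tangent vector lives at a finite level, by the bounded regularity argument already used in the proof of Lemma \ref{L_Cinftycinfty}) gives $T\!\left(\underrightarrow{\lim}M_{n}\right)=\underrightarrow{\lim}TM_{n}$ as convenient bundles. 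Condition (ii) of Definition \ref{D_DirectSequenceBanachLieAlgebroids}, $\rho_{m}\circ\lambda_{n}^{m}=T\varepsilon_{n}^{m}\circ\rho_{n}$, says exactly that the $\rho_{n}$ form a consistent sequence of bundle morphisms, so $\rho=\underrightarrow{\lim}\rho_{n}$ is a well defined map from $E=\underrightarrow{\lim}E_{n}$ to $TM=\underrightarrow{\lim}TM_{n}$; it covers $\operatorname{Id}_{M}$ and is fibrewise linear, being on each fibre a direct limit of bounded linear maps, and it is conveniently smooth by Lemma \ref{L_Cinftycinfty} (equivalently Proposition \ref{P_DirectLimitMap}), its expression in any direct limit chart restricting at level $n$ to the $C^{\infty}$ map $\rho_{n}$. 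Hence $\left(E,\pi,M,\rho\right)$ is a convenient anchored bundle.

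Next I would build the bracket. Fix $x\in M$, a direct limit chart $\left(U=\underrightarrow{\lim}U_{n},\phi=\underrightarrow{\lim}\phi_{n}\right)$ around $x$ trivialising every $E_{n|U_{n}}$, so that a section of $E_{n|U_{n}}$ becomes a map $\sigma_{n}:U_{n}\to\mathbb{E}_{n}$. Since each $[\;,\;]_{n}$ is localizable it depends only on the $1$-jets of its arguments, so in this trivialisation it has the form
\[
[s_{1},s_{2}]_{n}=d\sigma_{2}(\rho_{n}\sigma_{1})-d\sigma_{1}(\rho_{n}\sigma_{2})+C_{n}(\sigma_{1},\sigma_{2}),
\]
with $C_{n}:U_{n}\to L^{2}(\mathbb{E}_{n},\mathbb{E}_{n};\mathbb{E}_{n})$ a conveniently smooth antisymmetric structure field (for $[\;,\;]_{n}=[\;,\;]_{\nabla^{n}}$ one has $C_{n}(v,w)=\Gamma_{n}(w,\rho_{n}v)-\Gamma_{n}(v,\rho_{n}w)$). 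Because each bonding map $\lambda_{n}^{n+1}$ — the inclusion $\mathbb{E}_{n}\hookrightarrow\mathbb{E}_{n+1}$ on complemented fibres — is a morphism of almost Banach Lie algebroids, and since the first order terms already agree (as $\rho=\underrightarrow{\lim}\rho_{n}$), one obtains that $C_{n+1}(x)$ restricts to $C_{n}(x)$ on $\mathbb{E}_{n}\times\mathbb{E}_{n}$ for $x\in U_{n}$. Thus the associated maps $U_{n}\times\mathbb{E}_{n}\times\mathbb{E}_{n}\to\mathbb{E}_{n}$ form a consistent family and, by Lemma \ref{L_Cinftycinfty} and cartesian closedness, assemble into a conveniently smooth map $U\times\mathbb{E}\times\mathbb{E}\to\mathbb{E}$ that is bilinear and antisymmetric in the last two variables; this defines $[\;,\;]=\underrightarrow{\lim}[\;,\;]_{n}$ on $\underline{E}_{|U}$. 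Since each $[\;,\;]_{n}$ is already a global (localizable) operation on $\underline{E_{n}}$, its local expressions are compatible under change of direct limit chart, and these compatibilities persist in the limit, so the local formulas glue to a well defined global operation $[\;,\;]$ on $\underline{E}$; it is antisymmetric and satisfies the Leibniz identity relative to $\rho$ because both hold at each level and all the data are direct limits. This yields the almost convenient Lie algebroid $\left(E,\pi,M,\rho,[\;,\;]\right)$ of Part 2 and, with the compatibility of the $[\;,\;]_{n}$ now available, the bracket claimed in Part 1.

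Finally, for the "moreover" assertions I would note that the defect $[\rho s_{1},\rho s_{2}]-\rho[s_{1},s_{2}]$ and the Jacobiator of $[\;,\;]$ are $C^{\infty}(M)$-multilinear, hence tensorial, hence determined by their values at each point $x\in M$; since $x$ lies in some $M_{n}$ and there the limit bracket and anchor restrict to $[\;,\;]_{n}$ and $\rho_{n}$, these expressions vanish as soon as they vanish at every finite level. Therefore $\left(E,\pi,M,\rho,[\;,\;]\right)$ is a convenient algebroid when each $\left(E_{n},\pi_{n},M_{n},\rho_{n},[\;,\;]_{n}\right)$ is a Banach algebroid, and a convenient Lie algebroid when each is a Banach Lie algebroid (in the latter case one may instead check $d_{\rho}\circ d_{\rho}=0$ level by level). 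I expect the main obstacle to be the third paragraph: showing that $\underrightarrow{\lim}[\;,\;]_{n}$ is genuinely well defined and conveniently smooth on all conveniently smooth sections of $E$, not merely on sections arising from a finite level, which is precisely where the bounded regularity of the model (LBC)-space and the $Lip^{k}$/smooth-curve criterion underlying Lemma \ref{L_Cinftycinfty} are needed.
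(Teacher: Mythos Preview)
Your argument is correct and reaches the same conclusion, but the route you take for the bracket is genuinely different from the paper's. You work locally: in a direct limit trivialisation you write $[s_{1},s_{2}]_{n}=d\sigma_{2}(\rho_{n}\sigma_{1})-d\sigma_{1}(\rho_{n}\sigma_{2})+C_{n}(\sigma_{1},\sigma_{2})$, extract the antisymmetric structure field $C_{n}$, observe that the morphism condition forces $C_{n+1}$ to restrict to $C_{n}$, and then pass to the limit via Lemma \ref{L_Cinftycinfty}. The paper instead works globally and dually: it takes compatible sequences of sections $(s_{n}^{1}),(s_{n}^{2})$ and verifies $\lambda_{n}^{m}\circ[s_{n}^{1},s_{n}^{2}]_{E_{n}}=[s_{m}^{1},s_{m}^{2}]_{E_{m}}\circ\varepsilon_{n}^{m}$ by pairing both sides against an arbitrary $\alpha_{m}\in\Omega^{1}(M_{m},E_{m})$ and using the characterisation $d_{\rho_{n}}\circ(\lambda_{n}^{m})^{\ast}=(\lambda_{n}^{m})^{\ast}\circ d_{\rho_{m}}$ of an almost Lie algebroid morphism; Leibniz compatibility is then checked by a direct computation. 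Your approach has the advantage of making the convenient smoothness of the limit bracket transparent (it is a direct limit of smooth maps in charts), while the paper's approach avoids any choice of trivialisation and makes the passage to the algebroid and Lie algebroid cases entirely formal.

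One small point to tighten: your claim that the Jacobiator is $C^{\infty}(M)$-multilinear is not true for a general almost Lie bracket; it holds only once $\rho$ is already a Lie morphism. So in the Lie algebroid case you should first use the tensoriality of $[\rho s_{1},\rho s_{2}]-\rho[s_{1},s_{2}]$ (which is always valid) to conclude that the limit is an algebroid, and only then invoke tensoriality of the Jacobiator. You implicitly order things this way, but it is worth saying so explicitly.
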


\begin{proof}

1. According to Proposition \ref{P_StructureOnDirectLimitLinearBundles},
$\left(  \underrightarrow{\lim}E_{n},\underrightarrow{\lim}\pi_{n}%
,\underrightarrow{\lim}M_{n}\right)  $ can be endowed with a structure of
convenient vector bundle whose base is modeled on the LB-space
$\underrightarrow{\lim}\mathbb{M}_{n}$ and whose structural group is the Fr\'echet topological group $G(\mathbb{E})$.

2. Let $\left(  s_{n}^{1}\right)  _{n\in\mathbb{N}^{\ast}}$ and $\left(
s_{n}^{2}\right)  _{n\in\mathbb{N}^{\ast}}$ be  sequences of sections of
the linear bundles $\pi_{n}:E_{n}\rightarrow M_{n}$, i.e. fulfilling the
conditions :
\begin{equation}
\left\{
\begin{array}
[c]{c}%
\lambda_{n}^{m}\circ s_{n}^{1}=s_{m}^{1}\circ\varepsilon_{n}^{m}\\
\lambda_{n}^{m}\circ s_{n}^{2}=s_{m}^{2}\circ\varepsilon_{n}^{m}%
\end{array}
\right.  \label{_Comp_Sect}%
\end{equation}

In order to define a structure of almost convenient Lie structure on the
direct limit we have to prove the compatibility of the brackets
\begin{equation}
\lambda_{n}^{m}\circ\left[  s_{n}^{1},s_{n}^{2}\right]  _{E_{n}}=\left[
s_{m}^{1},s_{m}^{2}\right]  _{E_{m}}\circ\varepsilon_{n}^{m}
\label{_Comp_SectBrackets}%
\end{equation}

and the compatibility of the Leibniz properties:
\begin{equation}
\lambda_{n}^{m}\circ\left[  s_{n}^{1},g_{n}\times s_{n}^{2}\right]  _{E_{n}%
}=\left[  s_{m}^{1},g_{m}\times s_{m}^{2}\right]  _{E_{m}}\circ\varepsilon
_{n}^{m} \label{_Comp_SectLeibniz}%
\end{equation}

a) In order to prove (\ref{_Comp_SectBrackets}) we use the morphisms
$\lambda_{n}^{m}:E_{n}\longrightarrow E_{m}$ of Lie algebroids over
$\varepsilon_{n}^{m}:M_{n}\longrightarrow M_{m}:$
\begin{equation}
d_{\rho_{n}}\circ\left(  \lambda_{n}^{m}\right)  ^{\ast}=\left(  \lambda
_{n}^{m}\right)  ^{\ast}\circ d_{\rho_{m}} \label{_MorphismLieAlgebroids}%
\end{equation}

applied to $\alpha_{m}\in\Omega^{1}\left(  M_{m},E_{m}\right)  .$

We then have $\left(  d_{\rho_{n}}\circ\left(  \lambda_{n}^{m}\right)  ^{\ast
}\left(  \alpha_{m}\right)  \right)  \left(  s_{n}^{1},s_{n}^{2}\right)
=\left(  \left(  \lambda_{n}^{m}\right)  ^{\ast}\circ d_{\rho_{m}}\left(
\alpha_{m}\right)  \right)  \left(  s_{n}^{1},s_{n}^{2}\right)  ,$
\newline\newline For the LHS, we have:
\begin{align*}
&  \left(  d_{\rho_{n}}\circ\left(  \lambda_{n}^{m}\right)  ^{\ast}\left(
\alpha_{m}\right)  \right)  \left(  s_{n}^{1},s_{n}^{2}\right) \\
&  =L_{\rho_{n}\circ s_{n}^{1}}\left(  \left(  \left(  \lambda_{n}^{m}\right)
^{\ast}\left(  \alpha_{m}\right)  \right)  \left(  s_{n}^{2}\right)  \right)
-L_{\rho_{n}\circ s_{n}^{2}}\left(  \left(  \left(  \lambda_{n}^{m}\right)
^{\ast}\left(  \alpha_{m}\right)  \right)  \left(  s_{n}^{1}\right)  \right)
-\left(  \left(  \lambda_{n}^{m}\right)  ^{\ast}\left(  \alpha_{m}\right)
\right)  \left[  s_{n}^{1},s_{n}^{2}\right]  _{E_{n}}\\
&  =X_{m}^{1}\left(  \alpha_{m}\left(  \lambda_{n}^{m}\circ s_{n}^{2}\right)
\right)  -X_{m}^{2}\left(  \alpha_{m}\left(  \lambda_{n}^{m}\circ s_{m}%
^{1}\right)  \right)  -\alpha_{m}\left(  \lambda_{n}^{m}\circ\left[  s_{n}%
^{1},s_{n}^{2}\right]  _{E_{n}}\right)
\end{align*}

where $X_{m}^{a}=\rho_{m}\circ s_{m}^{a}$ with $a=1,2$ fulfill the relation
$X_{m}^{a}\left(  f_{m}\right)  =X_{n}^{a}\left(  f_{n}\right)  $ for
$f_{m}=\alpha_{m}\circ s_{m}$.

For the RHS, we get:
\begin{align*}
&  \left(  \left(  \lambda_{n}^{m}\right)  ^{\ast}\left(  d_{\rho_{m}}\left(
\alpha_{m}\right)  \right)  \right)  \left(  s_{n}^{1},s_{n}^{2}\right) \\
&  =d_{\rho_{m}}\left(  \alpha_{m}\right)  \left(  \lambda_{n}^{m}\circ
s_{n}^{1},\lambda_{n}^{m}\circ s_{n}^{2}\right). \\
&  =L_{\rho_{m}\circ\lambda_{n}^{m}\circ s_{n}^{1}}\left(  \alpha_{m}\left(
\lambda_{n}^{m}\circ s_{n}^{2}\right)  \right)  -L_{\rho_{m}\circ\lambda
_{n}^{m}\circ s_{n}^{2}}\left(  \alpha_{m}\left(  \lambda_{n}^{m}\circ
s_{n}^{1}\right)  \right)  -\alpha_{m}\left[  \lambda_{n}^{m}\circ s_{n}%
^{1},\lambda_{n}^{m}\circ s_{n}^{2}\right]  _{E_{m}}\\
&  =L_{\rho_{m}\circ s_{m}^{1}}\left(  \alpha_{m}\left(  \lambda_{n}^{m}\circ
s_{n}^{2}\right)  \right)  -L_{\rho_{m}\circ s_{m}^{2}}\left(  \alpha
_{m}\left(  \lambda_{n}^{m}\circ s_{n}^{1}\right)  \right)  -\alpha_{m}\left[
\lambda_{n}^{m}\circ s_{n}^{1},\lambda_{n}^{m}\circ s_{n}^{2}\right]  _{E_{m}%
}\\
&  =X_{m}^{1}\left(  \alpha_{m}\left(  \lambda_{n}^{m}\circ s_{n}^{2}\right)
\right)  -X_{m}^{2}\left(  \alpha_{m}\left(  \lambda_{n}^{m}\circ s_{m}%
^{1}\right)  \right)  -\alpha_{m}\left[  \lambda_{n}^{m}\circ s_{n}%
^{1},\lambda_{n}^{m}\circ s_{n}^{2}\right]  _{E_{m}}.%
\end{align*}

Finally, we have for all $\alpha_{m}\in\Omega^{1}\left(  M_{m},E_{m}\right)  $,
\newline$\alpha_{m}\left(  \lambda_{n}^{m}\left(  \left[  s_{n}^{1},s_{n}%
^{2}\right]  _{E_{n}}\right)  \right)  =\alpha_{m}\left[  \lambda_{n}^{m}\circ
s_{n}^{1},\lambda_{n}^{m}\circ s_{n}^{2}\right]  _{E_{m}}$ and we obtain:
$\lambda_{n}^{m}\circ\left[  s_{n}^{1},s_{n}^{2}\right]  _{E_{n}}=\left[
\lambda_{n}^{m}\circ s_{n}^{1},\lambda_{n}^{m}\circ s_{n}^{2}\right]  _{E_{m}%
}$.\newline Using $\lambda_{n}^{m}\circ s_{n}^{a}=s_{m}^{a}\circ
\varepsilon_{n}^{m}$, we have: $\lambda_{n}^{m}\circ\left[  s_{n}^{1}%
,s_{n}^{2}\right]  _{E_{n}}=\left[  s_{m}^{1},s_{m}^{2}\right]  _{E_{m}}%
\circ\varepsilon_{n}^{m}$.

b) To prove (\ref{_Comp_SectLeibniz}) we are going to establish that
\[
\lambda_{n}^{m}\circ\left(  g_{n}\times\left[  s_{n}^{1},s_{n}^{2}\right]
_{E_{n}}+\left(  \rho_{n}\left(  s_{n}^{1}\right)  \right)  \left(
g_{n}\right)  \times s_{n}^{2}\right)  =\left(  g_{m}\times\left[  s_{m}%
^{1},s_{m}^{2}\right]  _{E_{m}}+\left(  \rho_{m}\left(  s_{m}^{1}\right)
\right)  \left(  g_{m}\right)  \times s_{m}^{2}\right)  \circ\varepsilon
_{n}^{m}%
\]

We can write:
\begin{align*}
&  \lambda_{n}^{m}\circ\left(  g_{n}\times\left[  s_{n}^{1},s_{n}^{2}\right]
_{E_{n}}+\left(  \rho_{n}\left(  s_{n}^{1}\right)  \right)  \left(
g_{n}\right)  \times s_{n}^{2}\right) \\
&  =\lambda_{n}^{m}\circ\left(  g_{n}\times\left[  s_{n}^{1},s_{n}^{2}\right]
_{E_{n}}\right)  +\lambda_{n}^{m}\circ\left(  \left(  \rho_{n}\left(
s_{n}^{1}\right)  \right)  \left(  g_{n}\right)  \times s_{n}^{2}\right) \\
&  =g_{n}\times\left(  \lambda_{n}^{m}\circ\left[  s_{n}^{1},s_{n}^{2}\right]
_{E_{n}}\right)  +\lambda_{n}^{m}\left(  X_{n}^{1}\left(  g_{n}\right)
\right)  \times\lambda_{n}^{m}\circ s_{n}^{2}\quad\text{(}\lambda_{n}%
^{m}\text{ is a morphism)}\\
&  =g_{n}\times\left(  \left[  s_{m}^{1},s_{m}^{2}\right]  _{E_{m}}%
\circ\varepsilon_{n}^{m}\right)  +X_{m}^{1}\left(  g_{m}\right)
\circ\varepsilon_{n}^{m}\times s_{m}^{2}\circ\varepsilon_{n}^{m}\quad\text{cf.
(\ref{_Comp_SectBrackets})}\\
&  =\left(  g_{m}\circ\varepsilon_{n}^{m}\right)  \times\left(  \left[
s_{m}^{1},s_{m}^{2}\right]  _{E_{m}}\circ\varepsilon_{n}^{m}\right)  +\left(
X_{m}^{1}\left(  g_{m}\right)  \times s_{m}^{2}\right)  \circ\varepsilon
_{n}^{m}\\
&  =\left(  g_{m}\times\left[  s_{m}^{1},s_{m}^{2}\right]  _{E_{m}}\right)
\circ\varepsilon_{n}^{m}+\left(  \rho_{m}\left(  s_{m}^{1}\right)  \left(
g_{m}\right)  \times s_{m}^{2}\right)  \circ\varepsilon_{n}^{m}\\
&  =\left(  g_{m}\times\left[  s_{m}^{1},s_{m}^{2}\right]  _{E_{m}}+\left(
\rho_{m}\left(  s_{m}^{1}\right)  \right)  \left(  g_{m}\right)  \times
s_{m}^{2}\right)  \circ\varepsilon_{n}^{m}\text{.}%
\end{align*}
\qquad

3. Now, from the previous construction of $\underrightarrow{\lim}[\;,\;]_{n}$,
it is clear that if $\rho_{n}$ is a morphism of almost algebroids from $\left(
E_{n},\pi_{n},M_{n},\rho_{n},[\;,\;]_{n}\right)  $ to the canonical Banach Lie
algebroid $(TM_{n},\pi_n,M_{n},\textrm{Id}_{TM_n},[\;,\;])$, then $\underrightarrow{\lim}\rho_{n}$
satisfies
\[
\underrightarrow{\lim}\rho_{n}(\underrightarrow{\lim}[\;,\;]_{n}%
)=[\underrightarrow{\lim}\rho_{n}(.),\underrightarrow{\lim}\rho_{n}(.)].
\]
Moreover, it is also easy to show that if each bracket $[\;,\;]_{n}$ satisfies
the Jacobi identity, then $\underrightarrow{\lim}[\;,\;]_{n}$ satisfies also a
Jacobi identity. These last proofs are left to the reader.
\end{proof}

\begin{corollary}
\label{DLConnectionBundle} Let $\left(  D_{n}\right)  _{n\in\mathbb{N}^{\ast}%
}$ be a strong ascending    sequence of Banach connections on a strong ascending  sequence
$\left(  E_{n},\pi_{n},M_{n}\right)  _{n\in\mathbb{N}^{\ast}}$ of
Banach bundles. 
 Then there exists an almost convenient Lie algebroid structure on the bundle
$\left(  \underrightarrow{\lim}E_{n},\underrightarrow{\lim}\pi_{n}%
,\underrightarrow{\lim}M_{n}\right)  $.
\end{corollary}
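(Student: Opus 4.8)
The plan is to deduce the statement from Theorem~\ref{T_DLBanachLieAlgebroids_ConvenientLieAlgebroids}, part~2, after promoting the sequence $(D_{n})_{n\in\mathbb{N}^{\ast}}$ of Banach connections to a strong ascending sequence of almost Banach Lie algebroids. First I would associate to each Banach connection $D_{n}$ its covariant derivative $\nabla^{n}:\mathfrak{X}(M_{n})\times\underline{E_{n}}\to\underline{E_{n}}$, which is a localizable Koszul connection in the sense adopted here (cf. the discussion following Definition~\ref{D_Koszul}). Equipping each $E_{n}$ with an anchor $\rho_{n}$ --- the trivial anchor $\rho_{n}\equiv 0$ suffices, and any compatible sequence of anchors would serve as well --- Remark~\ref{R_ALBracketAssociatedToKoszulConnection} produces the almost Lie bracket $[s^{1},s^{2}]_{n}:=\nabla^{n}_{\rho_{n}s^{1}}s^{2}-\nabla^{n}_{\rho_{n}s^{2}}s^{1}$ on $\underline{E_{n}}$, so that $(E_{n},\pi_{n},M_{n},\rho_{n},[\;,\;]_{n})$ is an almost Banach Lie algebroid; and since $\nabla^{n}$ is localizable, so is $[\;,\;]_{n}$, in agreement with the standing convention.

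The second, and principal, step is to check that $(E_{n},\pi_{n},M_{n},\rho_{n},[\;,\;]_{n})_{n\in\mathbb{N}^{\ast}}$ is a strong ascending sequence of almost Banach Lie algebroids in the sense of Definition~\ref{D_DirectSequenceBanachLieAlgebroids}, part~2, i.e. that each bonding map $\lambda_{n}^{m}:E_{n}\to E_{m}$ is a morphism of almost Banach Lie algebroids over $\varepsilon_{n}^{m}$. Condition (a), $\rho_{m}\circ\lambda_{n}^{m}=T\varepsilon_{n}^{m}\circ\rho_{n}$, is the anchor compatibility already built into the notion of a strong ascending sequence (vacuous for the zero anchor). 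For condition (b) one must show, for $\lambda_{n}^{m}$-related sections $\lambda_{n}^{m}\circ s_{i}^{n}=s_{i}^{m}\circ\varepsilon_{n}^{m}$ ($i=1,2$), that $\lambda_{n}^{m}\circ[s_{1}^{n},s_{2}^{n}]_{n}=[s_{1}^{m},s_{2}^{m}]_{m}\circ\varepsilon_{n}^{m}$. This is exactly where the hypothesis $\lambda_{n}^{n+1}\circ D_{n}=D_{n+1}\circ T\lambda_{n}^{n+1}$ is used: translated through the correspondence between connectors and covariant derivatives, it says that $\nabla^{m}$ restricted to sections of $E_{n}$ along $M_{n}$ coincides with $\nabla^{n}$ (recall that $\nabla^{m}_{X}\sigma(x)$ depends only on $X(x)$ and on the $1$-jet of $\sigma$ at $x$). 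Combined with condition (a) this gives $\lambda_{n}^{m}\circ\nabla^{n}_{\rho_{n}s_{1}^{n}}s_{2}^{n}=(\nabla^{m}_{\rho_{m}s_{1}^{m}}s_{2}^{m})\circ\varepsilon_{n}^{m}$, and antisymmetrizing in the two sections yields the required bracket compatibility.

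Finally, Theorem~\ref{T_DLBanachLieAlgebroids_ConvenientLieAlgebroids}, part~2, then applies and furnishes an almost convenient Lie algebroid structure $(\underrightarrow{\lim}E_{n},\underrightarrow{\lim}\pi_{n},\underrightarrow{\lim}M_{n},\underrightarrow{\lim}\rho_{n},\underrightarrow{\lim}[\;,\;]_{n})$ on the convenient vector bundle of Proposition~\ref{P_StructureOnDirectLimitLinearBundles}, with $\underrightarrow{\lim}[\;,\;]_{n}=[\;,\;]_{\nabla}$ for the limit Koszul connection $\nabla=\underrightarrow{\lim}\nabla^{n}$ provided by Corollary~\ref{C_DLKoszulConnections}. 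I expect the only genuine work to lie in the second step, namely in converting the pointwise connector identity $\lambda_{n}^{n+1}\circ D_{n}=D_{n+1}\circ T\lambda_{n}^{n+1}$ into the statement that the covariant derivatives $\nabla^{n}$ are truly nested; this is a bookkeeping computation with local Christoffel components carried out in a common direct limit chart --- whose existence is guaranteed by the strong ascending structure, and in which each $E_{n}$ is a complemented, trivialized subbundle --- entirely parallel to the local computation already performed in the proof of Theorem~\ref{T_DirectLimitConnections}.
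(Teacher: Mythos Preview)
Your proposal is correct and follows essentially the same route as the paper: pass from each $D_{n}$ to the Koszul connection $\nabla^{n}$, form the bracket $[s_{1},s_{2}]_{n}=\nabla^{n}_{\rho_{n}s_{1}}s_{2}-\nabla^{n}_{\rho_{n}s_{2}}s_{1}$ as in Remark~\ref{R_ALBracketAssociatedToKoszulConnection}, verify that the compatibility $\lambda_{n}^{n+1}\circ D_{n}=D_{n+1}\circ T\lambda_{n}^{n+1}$ forces the brackets to be $\lambda_{n}^{m}$-related, and then invoke Theorem~\ref{T_DLBanachLieAlgebroids_ConvenientLieAlgebroids}. The paper's proof is terser---it simply asserts that the bracket sequence satisfies the required morphism condition ``since $(D_{n})$ is a direct sequence of Banach connections''---whereas you spell out that this step amounts to showing the $\nabla^{n}$ are nested, and you also make explicit (what the paper leaves implicit) that an anchor must be supplied and that the zero anchor already does the job; but the architecture of the argument is identical.
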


\begin{proof}
On each anchored bundle $(E_{n},\pi_{n},M_{n},\rho_{n})$, we denote by
$\nabla_{n}$ the $E_{n}$-Koszul connection associated to $D_{n}$. Therefore
$[s_{n}^{1},s_{n}^{2}]_{n}=\nabla_{\rho_{n}\left(  s_{n}^{1}\right)  }%
^{n}s_{n}^{2}-\nabla_{\rho_{n}\left(  s_{n}^{2}\right)  }^{n}s_{n}^{1}$
defines an almost Lie bracket on $(E_{n},\pi_{n},M_{n},\rho_{n})$. Since
$\left(  D_{n}\right)  _{n\in\mathbb{N}^{\ast}}$ is a direct sequence of
Banach connections, it follows that the sequence $\left(
[\;,\;]_{n}\right)  _{n\in\mathbb{N}^{\ast}}$ of almost brackets satisfies the
property (3) of Definition \ref{D_DirectSequenceBanachLieAlgebroids}.
Therefore, from Theorem \ref{T_DLBanachLieAlgebroids_ConvenientLieAlgebroids},
$\underrightarrow{\lim}[\;,\;]_{n}$ is an almost Lie bracket on the convenient
anchored bundle $\left(  \underrightarrow{\lim}E_{n},\underrightarrow{\lim}%
\pi_{n},\underrightarrow{\lim}M_{n},\underrightarrow{\lim}\rho_{n}\right)$.
\end{proof}

\section{\label{*IntegrabilityDistributionsDLLocalKoszulBanachBundles}Integrability of distributions which are direct limit of local Koszul Banach
bundles}

\subsection{\label{**IntegrabilityBanach}Integrability of the range of an anchor}

We first recall the classical definitions of distribution,
integrability and involutivity.

\begin{definition}
${}$ Let $M$ be a Banach manifold.

\begin{enumerate}
\item A distribution $\Delta$ on $M$ is an assignment $\Delta: x\mapsto
\Delta_{x}\subset T_{x}M$ on $M$ where $\Delta_{x}$ is a subspace of $T_{x}M$.

\item A vector field $X$ on $M$, defined on an open set Dom$(X)$, is called
tangent to a distribution $\Delta$ if $X(x)$ belongs to $\Delta_{x}$ for all
$x\in$Dom$(X)$.

\item A distribution $\Delta$ on $M$ is called integrable if, for all
$x_{0}\in M$, there exists a weak submanifold $(N,\phi)$ of $M$ such that
$\phi(y_{0})=x_{0}$ for some $y_{0}\in N$ and $T\phi(T_{y}N)=\Delta_{\phi(y)}$
for all $y\in N$. In this case $(N,\phi)$ is called an integral manifold of
$\Delta$ through $x$.

\item A distribution $\Delta$ is called involutive if for any vector fields $X$
and $Y$ on $M$ tangent to $\Delta$ the Lie bracket $[X,Y]$ defined on
Dom$(X)\cap$Dom$(Y)$ is tangent to $\Delta$.
\end{enumerate}
\end{definition}


Classically, in Banach context, when $\Delta$ is a complemented subbundle of
$TM$, according to the Frobenius Theorem, involutivity implies integrability.

In finite dimension, the famous results of H. Sussman and P. Stefan give
necessary and sufficient conditions for the integrability of smooth distributions.

A generalization of these results in the context of Banach manifolds can be
found in \cite{ChSt} and \cite{Pel}.

We are now in a position to prove the following theorem which will be useful
for the proof of the main theorem on the integrability of a distribution on a
direct limit of Banach manifolds endowed with Koszul connections.

\begin{theorem}
\label{T_ConnectionDistribution} Let $(E,\pi,M,\rho,[\;,\;]_{E})$ be a Banach
algebroid (cf. subsection \ref{**Algebroid}). Assume that for each $x\in M$, the kernel of $\rho_x$ is
complemented in each fiber $E_x$ and  $\mathcal{D}_x=\rho(E_x)$ is  closed in $T_xM$. Then  $\mathcal{D}$ is an
integrable weak distribution of $M$. \newline Assume that there exists a
linear connection on $E$. Then there exists a non linear connection on the
tangent bundle of each leaf of the distribution $\mathcal{D}$.
\end{theorem}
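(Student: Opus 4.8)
The plan is to treat the two assertions in turn: first that $\mathcal{D}$ is an integrable weak distribution, and then that a linear connection on $E$ produces a non linear connection on the tangent bundle of each leaf.

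\emph{Integrability.} I would start by checking that $\mathcal{D}$ is a \emph{weak} distribution in the sense of \cite{Pel}. Fix $x\in M$ and a Banach complement $C_x$ of $\ker\rho_x$ in $E_x$, so that $\rho$ restricts to a continuous linear bijection $\rho|_{C_x}\colon C_x\to\mathcal{D}_x=\rho(E_x)$. Since $\mathcal{D}_x$ is assumed closed in $T_xM$, it is a Banach space, hence by the open mapping theorem $\rho|_{C_x}$ is a topological isomorphism; transporting the norm equips $\mathcal{D}_x$ with a Banach structure for which the inclusion $\mathcal{D}_x\hookrightarrow T_xM$ is continuous. Because $(E,\pi,M,\rho,[\,,\,]_E)$ is an algebroid, the anchor is a Lie morphism, so for sections $s_1,s_2$ of $E$ one has $[\rho s_1,\rho s_2]=\rho[s_1,s_2]_E$, again a vector field tangent to $\mathcal{D}$; together with the Leibniz property this furnishes the involutivity-type hypothesis needed by the Banach integrability criterion. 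I would then invoke the integrability theorem for Banach Lie algebroids of \cite{Pel}, whose additional hypotheses are exactly that $\ker\rho$ be complemented and $\rho(E)$ be closed (the bracket being localizable, as assumed throughout), to conclude that $\mathcal{D}$ is integrable; each maximal integral manifold $L$ is then a weak submanifold of $M$ with $T_xL=\mathcal{D}_x$ for every $x\in L$.

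\emph{Connection on a leaf.} Fix a leaf $L$. Over $L$ the anchor restricts to a bundle morphism $\rho_L\colon E_{|L}\to TL$ which is \emph{surjective} (since $T_xL=\mathcal{D}_x=\rho(E_x)$) with fibrewise complemented kernel. Using the local description of $\mathcal{D}$ along $L$ obtained in the integrability step (cf. also \cite{ChSt}), $\ker\rho_L$ is a Banach subbundle $K_L$ of $E_{|L}$; choose a complementary subbundle $C_L$, so that $E_{|L}=K_L\oplus C_L$ and $\rho_L$ restricts to a bundle isomorphism $\Phi\colon C_L\to TL$. A linear connection on $E$ restricts, over the weak submanifold $L$, to a linear connection on $E_{|L}$ (work with the connector on $T(E_{|L})\subset TE_{|E_{|L}}$, or in local charts with Christoffel components); composing the restriction of its connector to $TC_L$ with the projection onto $C_L$ yields a linear connection on $C_L$, and transporting it by $\Phi$ gives a linear connection, equivalently a Koszul connection $\nabla^L$, on $TL$. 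By Theorem \ref{T_SprayKoszul} this corresponds to a spray on $TL$; in particular $\nabla^L$ is a non linear connection on $TL$ in the sense of Definition \ref{D_Connection}, which is the desired conclusion.

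The main obstacle is the structural bookkeeping in the second part: one must verify carefully that a linear connection on $E$ genuinely restricts to a linear connection on $E_{|L}$ even though $L$ is only a weak submanifold, and that $\ker\rho_L$ is a locally trivial Banach subbundle and not merely a fibrewise complemented family — this is where the closedness of $\mathcal{D}_x$ and a parametrised version of the open mapping theorem enter, and it is essentially the point already settled while proving integrability. Once these facts are in place, the projection-and-transport construction of $\nabla^L$ is routine; alternatively one could phrase the second part via the geodesics of the spray associated to the linear connection on $E$ (Theorem \ref{T_SprayKoszul}) restricted to $E_{|L}$, but the connector/projection argument is the most economical.
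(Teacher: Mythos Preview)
Your proposal is correct and follows essentially the same route as the paper. For integrability you both defer to the criterion of \cite{Pel}; for the connection on a leaf you both pull back (equivalently restrict) $E$ to $L$, split off the kernel subbundle $K_L$ to get a complement $C_L$ (the paper's $F_L$) isomorphic to $TL$ via the anchor, and then transport the given connection through this isomorphism together with the projection onto $C_L$---the paper writes this as $D_L=\rho_L\circ\Pi_L\circ\hat{D}_L$ with $\hat{D}_L=D\circ\hat{\iota}\circ(T\rho_L)^{-1}$, which is your projection-and-transport in slightly different order. Your explicit flagging of the two delicate points (that $\ker\rho_L$ is a genuine subbundle over $L$ and that the connection restricts along the weak submanifold $L$) is apt; the paper handles both by citing the proof of Theorem~2 in \cite{Pel}.
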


\begin{proof}
The first part of this theorem is an easy adaptation of the proof of Theorem 5
in \cite{Pel}.

We consider a leaf $L$ of $\mathcal{D}$. If $\iota:L\rightarrow M$ is the
natural inclusion, it is a smooth immersion of $L$ in $M$. Let $x$ be any
point of $L$ and denote $K_{x}$ the kernel of $\rho_{x}:\pi^{-1}%
(x)=E_{x}\rightarrow T_{x}M$. According to the assumption on $E$, we have a
decomposition $E_{x}=K_{x}\oplus F_{x}$. From the proof of Theorem 2 of
\cite{Pel}, $L$ is a Banach manifold modeled on $F:=F_{x}$. Consider the pull
back $E_{L}$ of $E$ over $L$ via $\iota:L\rightarrow M$. We have a bundle
morphism $\hat{\iota}$ from $E_{L}$ in $E$ over $\iota$ which is an
isomorphism on each fiber. Therefore, the kernel of $\hat{\rho}=\rho\circ
\iota$ is a Banach subbundle $K_{L}$ of $E_{L}$ and we have a subbundle
$F_{L}$ of $E_{L}$ such that $E_{L}=K_{L}\oplus F_{L}$. In particular, we have
an isomorphism $\rho_{L}$ from $F_{L}$ to $TL$. It follows that the tangent
map $T\rho_{L}:TF_{L}\rightarrow T(TL)$ is also an isomorphism. On the other
hand, according to the decomposition $E_{L}=K_{L}\oplus F_{L}$, we have also a
decomposition $TE_{L}=TK_{L}\oplus TF_{L}$.\newline

Now, assume that there exists a non linear connection on $E$ and let
$D:TE\rightarrow E$ be the associated map connection. The map $\hat{D}%
_{L}=D\circ\hat{\iota}\circ(T\rho_{L})^{-1}$ is smooth and maps the fiber of
$T_{(x,u)}(TL)$ over $(x,u)\in TL$ into the fiber of $(E_{L})_{x}$ over $x\in
L$. As $\hat{\iota}$ is an isomorphism from $(E_{L})_{x}$ to the fiber $E_{x}$
over $\iota(x)$, it follows that $\hat{D}_{L}$ is a linear continuous map
between these fibers. In particular, we can consider $\hat{D}_{L}$ as a map
from $T(TL)$ into $E_{L}$. Now if $\Pi_{L}$ is the projection of $E_{L}$ on
$F_{L}$ parallel to $K_{L}$, the map ${D}_{L}=\rho_{L}\circ\Pi_{L}\circ\hat
{D}_{L}$ defines a Koszul connection on $TL$.
\end{proof}

\subsection{Criterion of integrability for local direct limits of local Koszul
Banach bundles}

${}$ Let $M$ be a n.n.H. convenient manifold and denote by $TM$ its dynamical tangent
bundle. In the same way, a distribution $\Delta$ on $M$ is again an assignment
$\Delta:x\mapsto\Delta_{x}\subset T_{x}M$ on $M$ where $\Delta_{x}$ is a
subspace of $T_{x}M$. The notion of integrability and involutivity of a
distribution recalled in Subsection \ref{**IntegrabilityBanach} can be clearly
adapted to the convenient context.




We will now give a criterion of integrability for direct limit of local Koszul
Banach bundles. More precisely we have:

\begin{definition}
A distribution $\Delta$ on a n.n.H. convenient manifold $M$ is called a local
direct limit of local Koszul Banach bundles if the following property is satisfied:

(*) for any $x\in M$, there exists an open neighbourhood $U$ of $x$ and a strong ascending
sequence of anchored Banach bundles $(E_{n},\pi_{n},U_{n},\rho_{n})_{n\in
\mathbb{N}^{\ast}}$ endowed with a Koszul connection $\nabla^{n}$ such that
$U=\underrightarrow{\lim}U_{n},$ $\underrightarrow{\lim}\rho_{n}(E_{n}%
)=\Delta_{|U}$ and such that $E_{n}$ is a complemented subbundle of $E_{n+1}.$
\end{definition}

\begin{remark}
\label{R_HilbertCont} In the context of paracompact finite dimensional
manifolds or Hilbert manifolds, the condition of the existence of a Koszul
connection $\nabla^{n}$ and $E_{n}$ complemented in $E_{n+1}$ are
automatically satisfied.
\end{remark}

We then have the following criterion of integrability:

\begin{theorem}
\label{T_IntegrabilityDLKoszulBanachBundles} Let $\Delta$ be a local direct
limit of local Koszul Banach bundles. Assume that in the property (*) there
exists an almost Lie bracket $[\;,\;]_{n}$ on $(E_{n},\pi_{n},U_{n},\rho_{n})$
such that $(E_{n},\pi_{n},U_{n},\rho_{n},[\;,\;]_{n})$ is a Banach algebroid,
and over each point $y_{n}\in U_{n}$ the kernel of $\rho_{n}$ is complemented
in the fiber $\pi_{n}^{-1}(y_{n})$ and the range of $\rho_n$ is closed. 
\newline Then the distribution $\Delta$ is
integrable and the maximal integral manifold $N$ through $x=\underrightarrow
{\lim}x_{n}$ is a weak n.n.H.  convenient submanifold of $M$ which is a direct limit
of the set of maximal leaves $N_{n}$ of $\rho_{n}(E_{n})$ through $x_{n}$ in
$M_{n}$. Moreover, each maximal leaf has the limit chart property at any point and if $M$ is Hausdorff so is each leaf.
\end{theorem}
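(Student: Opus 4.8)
The plan is to peel the statement off level by level, reducing it to Theorem \ref{T_ConnectionDistribution} at each finite stage and then gluing the resulting Banach leaves by means of Proposition \ref{P_StrongAscending} and Theorem \ref{T_LBCmanifold}. Fix $x=\underrightarrow{\lim}x_n\in M$, together with an open neighbourhood $U$ and a strong ascending sequence $(E_n,\pi_n,U_n,\rho_n)_{n\in\mathbb{N}^\ast}$ carrying Koszul connections $\nabla^n$ (equivalently linear connections $D_n$ on $E_n$) and almost Lie brackets $[\;,\;]_n$ as in $(*)$, so that $(E_n,\pi_n,U_n,\rho_n,[\;,\;]_n)$ is a Banach algebroid in which $\ker\rho_n$ is fibrewise complemented. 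For each $n$, Theorem \ref{T_ConnectionDistribution} then applies to this Banach algebroid: the weak distribution $\mathcal{D}^n:=\rho_n(E_n)$ on $U_n$ is integrable, it admits a maximal leaf $N_n$ through $x_n$, $N_n$ is a Banach manifold modelled on a complement $\mathbb{F}_n$ of $\ker\rho_{n,x}$ in $E_{n,x}$, and $D_n$ induces a linear connection on $TN_n$.

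First I would show that $(N_n)_{n\in\mathbb{N}^\ast}$ is an ascending sequence of Banach manifolds with the direct limit chart property. The compatibility relation $\rho_{m}\circ\lambda_n^m=T\varepsilon_n^m\circ\rho_n$ of Definition \ref{D_DirectSequenceBanachLieAlgebroids}, together with injectivity of $T\varepsilon_n^m$, yields $\ker\rho_{m,y}\cap E_{n,y}=\ker\rho_{n,y}$ and $\mathcal{D}^n_y\subset\mathcal{D}^m_y$ for $y\in U_n$; hence the connected integral manifold $N_n$ of $\mathcal{D}^n$ through $x_n$ lies inside the maximal leaf $N_{n+1}$ of $\mathcal{D}^{n+1}$, and $N_n\hookrightarrow N_{n+1}$ is a smooth injective immersion, so $N_n$ is a weak submanifold of $N_{n+1}$. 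Using that $E_n$ is complemented in $E_{n+1}$ together with the fibrewise complementation of $\ker\rho_n$, one checks that the model $\mathbb{F}_n\cong\mathcal{D}^n_x$ of $N_n$ is complemented in the model $\mathbb{F}_{n+1}\cong\mathcal{D}^{n+1}_x$ of $N_{n+1}$; since moreover each $TN_n$ carries a linear connection, Proposition \ref{P_StrongAscending} gives that $E^N:=\underrightarrow{\lim}\mathbb{F}_n$ is an (LBC)-space and that $(N_n)$ has the direct limit chart property at each of its points. By Theorem \ref{T_LBCmanifold}, $N:=\underrightarrow{\lim}N_n$ then carries a unique n.n.H.\ convenient manifold structure modelled on $E^N$ whose associated topology is the $DL$-topology, and each $N_n$ is a weak submanifold of $N$.

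Next I would identify $N$ with the maximal integral manifold of $\Delta$ through $x$ and check it is a weak submanifold of $M$. The inclusions $\iota_n\colon N_n\hookrightarrow U_n$ are smooth, injective, compatible with the bonding maps and fibrewise injective on tangent spaces; passing to the limit via Lemma \ref{L_Cinftycinfty} and Proposition \ref{P_DirectLimitMap}, $\iota=\underrightarrow{\lim}\iota_n\colon N\to U\subset M$ is conveniently smooth and injective, $T_z\iota=\underrightarrow{\lim}T_z\iota_n$ is injective and continuous, and on models one obtains a continuous injective linear map $E^N=\underrightarrow{\lim}\mathbb{F}_n\to\underrightarrow{\lim}\mathbb{M}_n$ (the model of $M$); thus $(N,\iota)$ is a weak submanifold of $M$. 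For $z\in N$, say $z\in N_n$, we have $T_z\iota(T_zN)=\bigcup_m T_z\iota_m(T_zN_m)=\bigcup_m\mathcal{D}^m_z=\Delta_z$ by hypothesis $(*)$, so $N$ is an integral manifold of $\Delta$ through $x$, and its maximality follows from that of the $N_m$ (a strictly larger connected integral manifold of $\Delta$ would restrict, at each finite level, to an integral manifold of $\mathcal{D}^m$ strictly larger than $N_m$). Since $x$ is arbitrary and local integral manifolds glue in the usual way, $\Delta$ is integrable, each maximal leaf is the direct limit of the maximal leaves $N_n$ of the $\rho_n(E_n)$ and hence has the direct limit chart property at every point; finally, if $M$ is Hausdorff, then each leaf $N$, mapping continuously and injectively into $M$, is Hausdorff as well.

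I expect the main obstacle to be the verification in the second paragraph that the $N_n$ genuinely assemble into a strong ascending sequence of Banach manifolds — above all that the model $\mathbb{F}_n$ of $N_n$ is complemented in $\mathbb{F}_{n+1}$ — since this requires combining the complementation of $E_n$ in $E_{n+1}$ with the fibrewise complementation of $\ker\rho_n$ through a careful choice of compatible splittings (and it is here that the closedness of $\rho_n(E_n)$ in $TU_n$, needed to invoke Theorem \ref{T_ConnectionDistribution}, is used). Everything else is either direct-limit bookkeeping controlled by Lemma \ref{L_Cinftycinfty}, Proposition \ref{P_DirectLimitMap}, Proposition \ref{P_StrongAscending} and Theorem \ref{T_LBCmanifold}, or a standard argument about maximal integral manifolds.
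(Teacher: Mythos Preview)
Your proposal is correct and follows essentially the same approach as the paper: apply Theorem~\ref{T_ConnectionDistribution} at each level to obtain the Banach leaves $N_n$ with linear connections on $TN_n$, verify that $(N_n)$ is an ascending sequence with complemented models, and invoke Proposition~\ref{P_StrongAscending} and Theorem~\ref{T_LBCmanifold} to assemble $N=\underrightarrow{\lim}N_n$. The paper carries out the complementation step you flag as the main obstacle by exhibiting the explicit splitting $(TN_{n+1})_{|N_n}=TN_n\oplus\rho_{n+1}\bigl(F_{n+1}\cap(\ker\rho_n)_{|N_n}\bigr)$, but otherwise the argument is the same as yours.
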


\begin{proof}
At first, for each $n\in\mathbb{N}^{\ast}$, we can apply the first part of
Theorem \ref{T_ConnectionDistribution}. Therefore, with the notation of
property (*), if we fix some $x=\underrightarrow{\lim}x_{n}$, there exists a
maximal integral manifold $N_{n}$ of $\rho_{n}(E_{n})$ through $x_{n}$ in
$U_{n}$. Recall that we have $U_{n}\subset U_{n+1}$ and $E_{n}\subset E_{n+1}$
over $U_{n}$. Therefore, according to Property (2) of Definition
\ref{D_DirectSequenceBanachLieAlgebroids}, for any $y\in N_{n}$, we have
$T_{y}N_{n}\subset T_{y}N_{n+1}$ on $N_{n}\cap N_{n+1}$. Since $N_{n+1}$ is a
maximal integral manifold of $\rho_{n+1}(E_{n+1})$ in $U_{n+1}$ and
$U_{n}\subset U_{n+1}$, if $y$ belongs to $N_{n}$, we have a smooth curve in
$N_{n}$ which joins $x_{n}$ to $y$ and since $E_{n}\subset E_{n+1}$ over
$U_{n}$ this curve must be contained in $N_{n+1}$ and so $N_{n}$ must be
contained in $N_{n+1}$. Now, on the one hand, over each point of $U_{n}$ the
kernel of $\rho_{n}$ is complemented in each fiber and, on the other hand,
over $N_{n}$ the kernel of $\rho_{n}$ is a subbundle of ${E_{n}}_{|N_{n}}$.
The same property is true for $E_{n+1|N_{n+1}}$. But over $N_{n}\subset
N_{n+1}$, we have
\[
\rho_{n}({E_{n}}_{|N_{n}})=TN_{n}\subset\left(  TN_{n+1}\right)  _{|N_{n}%
}=\rho_{n+1}({E_{n+1}}_{|N_{n}}).
\]
Therefore $\left(  \ker\rho_{n+1}\right)  _{|N_{n}}\subset\left(  \ker\rho
_{n}\right)  _{|N_{n}}$. But, from our assumption, we have the following
Whitney decomposition:
\[
{E_{n+1}}_{|N_{n}}=F_{n+1}\oplus\left(  \ker\rho_{n+1}\right)  _{|N_{n}%
}\text{ and }{E_{n}}_{|N_{n}}=F_{n}\oplus\left(  \ker\rho_{n}\right)
_{|N_{n}}.
\]

Therefore
\[
\left(  \ker\rho_{n}\right)  _{|N_{n}}=\left(  \ker\rho_{n+1}\right)
_{|N_{n}}\oplus F_{n+1}\cap\left(  \ker\rho_{n}\right)  _{|N_{n}}
\]
Finally we obtain:
\[
\left(  TN_{n+1}\right)  _{|N_{n}}=TN_{n}\oplus\rho_{n+1}(F_{n+1}\cap\left(
\ker\rho_{n}\right)  _{|N_{n}}).
\]

Now, from property (*) and the second part of Theorem
\ref{T_ConnectionDistribution}, we have a linear connection on $TN_{n}$. Thus
the ascending sequence $(N_{n})$ satisfies the assumption of Corollary
\ref{ascendingSequenceConnection}, $N=\underrightarrow{\lim}N_{n}$ has a
structure of convenient manifold modeled on an LB-space. Moreover, by
construction, we have $TN=\Delta_{|N}$. This means that $\Delta$ is an
integral manifold of $\Delta$ through $x$. Moreover, $N$ satisfies the direct
limit chart property. \newline Take any maximal integral manifold $L$ of
$\Delta$ and choose some $x=\underrightarrow{\lim}x_{n}$ in $L$. From our
previous construction we have a sequence of Banach integral manifolds
$(N_{n})$ such that $N=\underrightarrow{\lim}N_{n}$ is an integral manifold of
$\Delta$ through $x$. Therefore $N$ is open in $L$. Since $N$ has the direct
limit chart property, the same is true of $L$.\\
Now as the intersection of an open set in $M$ with any leaf $L$ is an open set of $L$, thus, if $M$ is an Hausdorff topological space,
$L$ inherits of this property.
\end{proof}

From this result we easily obtain:

\begin{corollary}
\label{C_DLLeaf} Let $\left(  E_{n},\pi_{n},M_{n},\rho_{n},[\;,\;]_{n}\right)
_{n\in\mathbb{N}^{\ast}}$ be a strong ascending sequence of Banach algebroids provided
with a Koszul connection on each $E_{n}$ such that over each point $x_{n}\in
M_{n}$ the kernel of $\rho_{n}$ is complemented in the fiber $\pi_{n}%
^{-1}(x_{n})$ and the range of $\rho_n$ is closed. Then $\Delta=\underrightarrow{\lim}\rho_{n}(E_{n})$ is an
integrable distribution on $M=\underrightarrow{\lim}M_{n}$. Moreover, for any
$x=\underrightarrow{\lim}x_{n}$, the maximal leaf through $x$ is a weak
n.n.H. convenient submanifold of $M$ and there exists a leaf $N_{n}$ of $\rho_{n}%
(E_{n})$ in $M_{n}$ through $x_{n}$ such that the sequence $(N_{n}%
)_{n\in\mathbb{N}^{\ast}}$ is an ascending  sequence of Banach manifolds whose
direct limit $N=\underrightarrow{\lim}N_{n}$ is an integral manifold of
$\Delta$ through $x$ such that $N$ has the direct limit chart property at $x$. Moreover, if $M$ is Hausdorff so is each leaf.
\end{corollary}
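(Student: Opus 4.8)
The plan is to obtain this corollary directly from Theorem~\ref{T_IntegrabilityDLKoszulBanachBundles} by taking the open neighbourhood occurring in its hypothesis~(*) to be all of $M$. First I would observe that $\Delta=\underrightarrow{\lim}\rho_{n}(E_{n})$ is indeed a well-defined distribution on $M=\underrightarrow{\lim}M_{n}$: for $x\in M$ one chooses $n$ with $x\in M_{n}$ and sets $\Delta_{x}=\bigcup_{m\geq n}\rho_{m}\bigl((E_{m})_{x}\bigr)\subset T_{x}M$, the compatibility relation $\rho_{m}\circ\lambda_{n}^{m}=T\varepsilon_{n}^{m}\circ\rho_{n}$ of Definition~\ref{D_DirectSequenceBanachLieAlgebroids}~(1)(ii) ensuring that this union is increasing and independent of the chosen $n$.

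Next I would verify that the hypotheses of Theorem~\ref{T_IntegrabilityDLKoszulBanachBundles} are satisfied with $U=M$ and $U_{n}=M_{n}$. By assumption $(E_{n},\pi_{n},M_{n},\rho_{n},[\;,\;]_{n})_{n\in\mathbb{N}^{\ast}}$ is a strong ascending sequence of Banach algebroids, each $E_{n}$ carrying a Koszul connection $\nabla^{n}$, and $M=\underrightarrow{\lim}M_{n}$, $\Delta_{|M}=\underrightarrow{\lim}\rho_{n}(E_{n})$ by construction; moreover $M$ is trivially an open neighbourhood of each of its points. The one point deserving a comment is that $E_{n}$ is a complemented subbundle of $E_{n+1}$: this is not an additional requirement but a consequence of condition~2 of Definition~\ref{D_AscendingSequenceBanachVectorBundles} (the typical fibre $\mathbb{E}_{n}$ is a complemented Banach subspace of $\mathbb{E}_{n+1}$) together with the compatible local trivializations furnished by conditions~4 and~5 of that definition. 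Thus property~(*) holds globally, and the remaining assumptions of Theorem~\ref{T_IntegrabilityDLKoszulBanachBundles} are present verbatim in the statement: each $(E_{n},\pi_{n},M_{n},\rho_{n},[\;,\;]_{n})$ is a Banach algebroid, and over each $x_{n}\in M_{n}$ the kernel of $\rho_{n}$ is complemented in $\pi_{n}^{-1}(x_{n})$.

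Finally I would apply Theorem~\ref{T_IntegrabilityDLKoszulBanachBundles} to conclude that $\Delta$ is integrable and that, for any $x=\underrightarrow{\lim}x_{n}$, the maximal integral manifold through $x$ is a weak n.n.H.\ convenient submanifold of $M$, realized as the direct limit of the maximal leaves $N_{n}$ of $\rho_{n}(E_{n})$ through $x_{n}$ in $M_{n}$, enjoying the direct limit chart property at $x$, and Hausdorff whenever $M$ is. The fact that $(N_{n})_{n\in\mathbb{N}^{\ast}}$ is an ascending sequence of Banach manifolds, with $N_{n}\subset N_{n+1}$ and $(TN_{n+1})_{|N_{n}}=TN_{n}\oplus\rho_{n+1}\bigl(F_{n+1}\cap(\ker\rho_{n})_{|N_{n}}\bigr)$, is already established within the proof of that theorem and would merely be recalled here. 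I expect essentially no obstacle: the entire content is packaged in Theorem~\ref{T_IntegrabilityDLKoszulBanachBundles}, and the only mild subtlety worth spelling out is that the global data of a strong ascending sequence of Banach algebroids equipped with Koszul connections automatically instantiates the local template~(*), in particular the complementedness of $E_{n}$ in $E_{n+1}$.
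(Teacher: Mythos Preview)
Your proposal is correct and follows exactly the route the paper intends: the paper offers no proof beyond the phrase ``From this result we easily obtain,'' meaning the corollary is a direct specialization of Theorem~\ref{T_IntegrabilityDLKoszulBanachBundles} with $U=M$ and $U_{n}=M_{n}$. Your observation that the complementedness of $E_{n}$ in $E_{n+1}$ is already built into the definition of a strong ascending sequence of Banach vector bundles is the only point worth making explicit, and you handle it correctly.
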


Now according to Remark \ref{R_HilbertCont} we also easily obtain:

\begin{corollary}
\label{C_DLHilbert} Let $\Delta$ be a distribution on a direct limit
$M=\underrightarrow{\lim}M_{n}$ of finite dimensional (resp. Hilbert)
  paracompact manifolds. Assume that, for any $x=\underrightarrow{\lim}x_{n}$,
there exists a sequence of finite rank (resp. Hilbert) algebroids $(E_{n}%
,\pi_{n},U_{n},\rho_{n})_{n\in\mathbb{N}^{\ast}}$ such that
$U=\underrightarrow{\lim}U_{n},$ $\underrightarrow{\lim}\rho_{n}(E_{n}%
)=\Delta_{|U}$. Then $\Delta$ is integrable and the maximal integral manifold
$N$ through $x=\underrightarrow{\lim}x_{n}$ is a weak convenient submanifold
of $M$ which is the direct limit of the set of maximal leaves $N_{n}$ of
$\rho_{n}(E_{N})$ through $x_{n}$ in $M_{n}$. Moreover, each maximal leaf has
the limit chart property at any point and is a Hausdorff convenient manifold.
\end{corollary}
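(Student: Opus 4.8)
The plan is to show that this statement follows by specializing Theorem \ref{T_IntegrabilityDLKoszulBanachBundles}, after checking that the hypotheses imposed there are automatically met in the finite-dimensional and Hilbert settings. First I would invoke Remark \ref{R_HilbertCont}: on a paracompact finite-dimensional manifold or a Hilbert manifold, every closed subspace of the model is complemented (using an inner product in the Hilbert case), so for each $n$ the bundle $E_{n}$ is automatically a complemented subbundle of $E_{n+1}$ and the kernel of $\rho_{n}$ at each point is complemented in the fiber $\pi_{n}^{-1}(y_{n})$. Moreover, paracompactness of each $M_{n}$ guarantees (cf. Remark \ref{R_ExistsConnection}) the existence of a linear connection on $TM_{n}$, and hence, by pulling back or by the same partition-of-unity argument on the vector bundle $E_{n}$, a Koszul connection $\nabla^{n}$ on each $E_{n}$. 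Thus every hypothesis appearing in the definition of a local direct limit of local Koszul Banach bundles and in the extra algebroid assumption of Theorem \ref{T_IntegrabilityDLKoszulBanachBundles} is fulfilled, once we observe that the given algebroid structure $(E_{n},\pi_{n},U_{n},\rho_{n})$ comes equipped with its Lie bracket (it is an algebroid by hypothesis), so $[\;,\;]_{n}$ is at our disposal.

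Next I would feed these data into Theorem \ref{T_IntegrabilityDLKoszulBanachBundles}: for each $x=\underrightarrow{\lim}x_{n}$ in $M$, the open neighborhood $U=\underrightarrow{\lim}U_{n}$ with the sequence $(E_{n},\pi_{n},U_{n},\rho_{n})_{n\in\mathbb{N}^{\ast}}$ exhibits $\Delta$ as a local direct limit of local Koszul Banach bundles satisfying the algebroid and complementation conditions. The theorem then yields that $\Delta$ is integrable, that the maximal integral manifold $N$ through $x$ is a weak n.n.H. convenient submanifold of $M$ arising as $\underrightarrow{\lim}N_{n}$ where $N_{n}$ is the maximal leaf of $\rho_{n}(E_{n})$ through $x_{n}$ in $M_{n}$, and that each maximal leaf has the direct limit chart property at every point. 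It remains to upgrade "n.n.H. convenient" to "Hausdorff convenient": for this I would use that each $N_{n}$, being an immersed leaf of a finite-dimensional (resp. Hilbert) integrable distribution on a paracompact manifold, is itself a paracompact (resp. Hausdorff) manifold; invoking Theorem \ref{T_LBCmanifold} (or Corollary \ref{ascendingSequenceConnection}), the direct limit of such an ascending sequence, equipped with the linear connections on the $TN_{n}$ supplied by the second part of Theorem \ref{T_ConnectionDistribution}, carries a Hausdorff convenient manifold structure. Since any maximal leaf $L$ contains such an $N$ as an open subset, $L$ inherits the Hausdorff property.

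The only point requiring care — and the step I expect to be the main obstacle — is making precise that the finite-dimensional/Hilbert hypotheses genuinely discharge all the auxiliary assumptions hidden in Theorem \ref{T_IntegrabilityDLKoszulBanachBundles}, in particular the existence of the Koszul connections $\nabla^{n}$ on the $E_{n}$ (not merely on the tangent bundles $TM_{n}$) and the closedness/complementation of $\rho_{n}(E_{n})$ inside $T M_{n}$, which is implicit in applying Theorem \ref{T_ConnectionDistribution}. In the Hilbert case this is immediate from orthogonal complements; in the finite-dimensional case the ranges are automatically closed and finite-dimensional subspaces are always complemented. Once this bookkeeping is done, the corollary is a direct reading of the main theorem, so I would keep the written proof short: cite Remark \ref{R_HilbertCont} for the complementation, Remark \ref{R_ExistsConnection} for the connections, then apply Theorem \ref{T_IntegrabilityDLKoszulBanachBundles} and note that the Hausdorff conclusion follows as in the last paragraph of that theorem's proof together with Corollary \ref{ascendingSequenceConnection}.
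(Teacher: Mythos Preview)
Your proposal is correct and follows essentially the same approach as the paper, which simply states that the corollary is obtained ``according to Remark \ref{R_HilbertCont}'' from Theorem \ref{T_IntegrabilityDLKoszulBanachBundles}. Your write-up is more explicit than the paper's one-line justification---in particular your discussion of the Hausdorff conclusion via Corollary \ref{ascendingSequenceConnection} and Theorem \ref{T_ConnectionDistribution} spells out what the paper leaves implicit---but the underlying strategy is identical.
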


\subsection{Application}

Consider a direct  sequence of Banach Lie groups $G_{1}\subset G_{2}%
\subset\cdots\subset G_{n}\subset\cdots$ such that the Lie algebra
$\mathcal{G}_{n}$ is complemented in the Lie algebra $\mathcal{G}_{n+1}$ for
all $n\in\mathbb{N}^{\ast}$.\newline

Note that this situation always occurs if for all $n\in\mathbb{N}^{\ast}$,
each Lie group $G_{n}$ is finite dimensional or is a Hilbert Lie group. This
assumption is also valid for the sequence $G_{n}=GL(E_{n})$ where $E_{n}$ is
a direct  sequence of Banach spaces such that each $E_{n}$ is closed and
complemented in $E_{n+1}$.\newline

According to Example \ref{Ex_StructureLieAlgebroidLinkedActionBanachLieGroup},
assume that for each $n\in\mathbb{N}^{\ast}$, we have a smooth right action
$\psi_{n}:M_{n}\times G_{n}\longrightarrow M_{n}$ of $G_{n}$ over a Banach
manifold $M_{n}$ where $M_{1}\subset M_{2}\subset\cdots\subset M_{n}%
\subset\cdots$ is an ascending sequence such that $M_{n}$ is a Banach
submanifold of $M_{n+1}$. We get a strong ascending   sequence  Lie Banach algebroids
$(M_{n}\times\mathcal{G}_{n},\pi_{n},M_{n},\Psi_{n},[\;,\;]_{\mathcal{G}_{n}%
})$. Since each Banach bundle $M_{n}\times\mathcal{G}_{n}$ is trivial, we
obtain a sequence of Banach Lie algebroids with anchors 
\[
\begin{array}
[c]{cccc}%
\Psi_{n}: & M_{n}\times\mathcal{G}_{n} & \longrightarrow & TM_{n}\\
& \left(  x_{n},X_{n}\right)  & \mapsto & T_{\left(  x_{n},e_{n}\right)}%
\psi_{n}\left(  0,X_{n}\right)
\end{array}
\]
Because these bundles are trivial, we get a sequence of compatible trivial
Koszul connections $\nabla^{n}$ on $M_{n}\times\mathcal{G}_{n}$. Now, from the
Corollary \ref{C_DLLeaf}, we obtain:

\begin{theorem}
\label{T_DirectLimitComplementedAction} In the previous context, we obtain a
smooth right action $\psi=\underrightarrow{\lim}\psi_{n}$ of $G=\underrightarrow
{\lim}G_{n}$ on the convenient manifold $M=\underrightarrow{\lim}M_{n}$.
Moreover, if the kernel of $\Psi_{n}$ is complemented in each fiber $\pi
_{n}^{-1}(x_{n})$ and the range of $\Psi_n$ is closed, then the orbit $\psi(x,G)$ of this action through
$x=\underrightarrow{\lim}x_{n}$ is a weak n.n.H. convenient submanifold of $M$ which
is the direct limit of the set of $G_{n}$-orbits $\{\psi_{n}(x_{n},G_{n}%
)\}_{n\in\mathbb{N}^{\ast}}$. If $M$ is Hausdorff, so is each  orbit.
\end{theorem}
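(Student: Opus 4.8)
The plan is to obtain the result as a direct application of Corollary \ref{C_DLLeaf} to the strong ascending sequence of Banach Lie algebroids $(M_n\times\mathcal{G}_n,\pi_n,M_n,\Psi_n,[\;,\;]_{\mathcal{G}_n})$ equipped with the compatible trivial Koszul connections $\nabla^n$, combined with the identification of leaves with orbits coming from Example \ref{Ex_StructureLieAlgebroidLinkedActionBanachLieGroup}.

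First I would install the convenient structures. Since each $\mathcal{G}_n$ is complemented in $\mathcal{G}_{n+1}$ and each Banach Lie group $G_n$ carries a linear connection on $TG_n$, Proposition \ref{P_StrongAscending} shows that $(G_n)_{n\in\mathbb{N}^\ast}$ has the direct limit chart property and that $\mathcal{G}=\underrightarrow{\lim}\mathcal{G}_n$ is an (LBC)-space; hence $G=\underrightarrow{\lim}G_n$ is a convenient Lie group modeled on $\mathcal{G}$ by Theorems \ref{T_LBCmanifold} and \ref{T_LBCLiegroup} (and the $T_1$ argument makes its topology Hausdorff), while $M=\underrightarrow{\lim}M_n$ is a convenient manifold by Theorem \ref{T_LBCmanifold}. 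As $\mathbb{M}_n\times\mathcal{G}_n$ is complemented in $\mathbb{M}_{n+1}\times\mathcal{G}_{n+1}$, Proposition \ref{P_PreservationCompletness} together with Example \ref{Ex_existDLChart}(5) gives $M\times G=\underrightarrow{\lim}(M_n\times G_n)$ as convenient manifolds. The sequence $(\psi_n)$ is consistent because the $G_{n+1}$-action on $M_{n+1}$ restricts to the $G_n$-action on $M_n$, so $\psi=\underrightarrow{\lim}\psi_n\colon M\times G\to M$ is well defined; being $C^\infty$ on each $M_n\times G_n$ it is conveniently smooth by Lemma \ref{L_Cinftycinfty} (or Proposition \ref{P_DirectLimitMap}), and the two right-action axioms, valid at each level, pass to the union, so $\psi$ is a smooth right action of $G$ on $M$.

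Next I would apply Corollary \ref{C_DLLeaf}. Its hypotheses are satisfied: each $(M_n\times\mathcal{G}_n,\pi_n,M_n,\Psi_n,[\;,\;]_{\mathcal{G}_n})$ is a Banach Lie algebroid by Example \ref{Ex_StructureLieAlgebroidLinkedActionBanachLieGroup}, the $\nabla^n$ are compatible trivial Koszul connections, and by assumption $\ker\Psi_n$ is complemented in each fiber $\pi_n^{-1}(x_n)$. The corollary then yields that $\Delta=\underrightarrow{\lim}\Psi_n(M_n\times\mathcal{G}_n)$ is integrable on $M$ and that, for $x=\underrightarrow{\lim}x_n$, the maximal leaf $N$ through $x$ is a weak n.n.H.\ convenient submanifold of $M$ of the form $N=\underrightarrow{\lim}N_n$, where $N_n$ is a leaf of $\Psi_n(M_n\times\mathcal{G}_n)$ through $x_n$, with $N$ satisfying the direct limit chart property at $x$ and being Hausdorff whenever $M$ is. It remains to recognize the $N_n$ as orbits: by Example \ref{Ex_StructureLieAlgebroidLinkedActionBanachLieGroup} one has $\ker(\Psi_n)_{x_n}=(\mathcal{G}_n)_{x_n}$, the isotropy subalgebra of $x_n$, so the complementation assumption is exactly the one made there and the leaf of $\Psi_n(M_n\times\mathcal{G}_n)$ through $x_n$ is the orbit $\psi_n(x_n,G_n)$. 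Since $G=\bigcup_{n}G_n$ and $\psi(x,g)=\psi_n(x_n,g)$ for $g\in G_n$, we get $\psi(x,G)=\bigcup_{n}\psi_n(x_n,G_n)=\bigcup_{n}N_n=N$ as sets, and the convenient submanifold structure on $N$ transfers to $\psi(x,G)$; this gives the asserted description of the orbit, and the Hausdorff statement follows from the corresponding one in Corollary \ref{C_DLLeaf}.

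The main obstacle I anticipate is organizational rather than conceptual: verifying carefully that the trivial connections $\nabla^n$ genuinely form a strong ascending sequence of Banach connections and that the bonding maps $\lambda_n^{n+1}\colon M_n\times\mathcal{G}_n\to M_{n+1}\times\mathcal{G}_{n+1}$ are almost Banach algebroid morphisms --- that is, that $\Psi_{n+1}\circ\lambda_n^{n+1}=T\varepsilon_n^{n+1}\circ\Psi_n$ and that the brackets $[\;,\;]_{\mathcal{G}_n}$ satisfy the compatibility of Definition \ref{D_DirectSequenceBanachLieAlgebroids}. All of this reduces to the compatibility $\psi_{n+1}|_{M_n\times G_n}=\psi_n$ of the actions and to the fact that $\mathcal{G}_n$ is a Lie subalgebra of $\mathcal{G}_{n+1}$; once these routine verifications are recorded, the statement follows by the invocations above.
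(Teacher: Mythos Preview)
Your proposal is correct and follows essentially the same route as the paper: the paper presents the theorem as a direct consequence of Corollary~\ref{C_DLLeaf} applied to the strong ascending sequence of Banach Lie algebroids $(M_n\times\mathcal{G}_n,\pi_n,M_n,\Psi_n,[\;,\;]_{\mathcal{G}_n})$ equipped with the trivial Koszul connections, together with the identification of leaves with orbits from Example~\ref{Ex_StructureLieAlgebroidLinkedActionBanachLieGroup}. Your write-up is in fact more explicit than the paper's own treatment, which records the setup and then simply states ``from Corollary~\ref{C_DLLeaf} we obtain'' the theorem; the additional care you take in justifying the smoothness of $\psi$ and the compatibility conditions is appropriate and does not deviate from the intended argument.
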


\end{document}